\documentclass[twoside,12pt]{article}
\usepackage{amsmath,amssymb,amsthm,amsfonts}
\usepackage{paralist}
\usepackage{subfigure}
\usepackage[title]{appendix}

\usepackage{graphics} 
\usepackage{epsfig} 
\usepackage{graphicx}
\usepackage{caption}
\usepackage{epstopdf}
\usepackage[colorlinks=true]{hyperref}
\usepackage{float}
\usepackage{pdfpages}
\usepackage{amsfonts}
\usepackage{amsmath,amsthm}
\usepackage{multirow}
\usepackage{cancel}
\usepackage{amsmath}
\usepackage{txfonts}
\usepackage{enumerate}
\usepackage[numbers,sort&compress]{natbib}
\usepackage[font=scriptsize,labelfont=bf,width=0.95\textwidth]{caption}

\numberwithin{equation}{section}
\newcommand{\R}{{\mathbb R}}

\newcommand{\be}{\begin{equation}}
\newcommand{\ee}{\end{equation}}

\newcommand{\ben}{\begin{eqnarray*}}
\newcommand{\enn}{\end{eqnarray*}}

\newtheorem{proposition}{Proposition}[section]
\newtheorem{theorem}{\textbf Theorem}[section]
\newtheorem{lemma}{\textbf Lemma}[section]

 \numberwithin{equation}{section}
\newtheorem{remark}{Remark}[section]
\renewcommand{\theequation}{\arabic{section}.\arabic{equation}}

\usepackage{microtype}

\begin{document}

\title{\textbf{Mountain-Pass Solutions for Second-Order Ergodic Mean-Field Game Systems}}
\author{
	Fanze Kong \thanks{Department of Applied Mathematics, University of Washington, Seattle, WA 98195, USA; fzkong@uw.edu}, 
	Yonghui Tong\thanks{Center for Mathematical Sciences, Wuhan University of Technology, Wuhan 430070, China; myyhtong@whut.edu.cn} and
	Xiaoyu Zeng\thanks{Center for Mathematical Sciences, Wuhan University of Technology, Wuhan 430070, China; xyzeng@whut.edu.cn}
}
\date{\today}
\maketitle
 \abstract{We study the existence of mountain-pass solutions to a potential-free mean-field game system in the whole space $\mathbb R^n$ under the mass-supercritical regime, assuming an aggregating local coupling and a $C^2$ Hamiltonian that is $\gamma$-homogeneous with $\gamma > 1$. Due to the lack of smoothness of the underlying variational structure, the standard deformation lemma and the classical mountain-pass theorem are not directly applicable. To overcome this difficulty, we constrain the nonlinear term and employ a {\em two-stage linearization} argument to establish the existence of least-energy solutions to an auxiliary mean-field game problem with general coercive potentials. In the vanishing coercive potential limit, we recover compactness by using maximal regularity for Hamilton–Jacobi equations together with Pohozaev-type identities, and show that the potential-free mean-field game system admits a classical solution, which is also an optimizer of a Gagliardo–Nirenberg-type inequality. Finally, we analyze the mountain-pass geometry of the variational structure, which yields that the solution obtained above corresponds to a mountain-pass type solution of the original mean-field game system. These results provide an affirmative answer to the longstanding problem concerning the existence of mountain-pass solutions for mean-field game systems. Furthermore, as a byproduct, we relax the admissible set and provide a unified framework for establishing the optimal Gagliardo–Nirenberg inequality below the Sobolev critical exponent. } 



\medskip

{\sc Keywords}: Mean-field Games, Variational Approach, Constrained Minimization Problem, Mountain-Pass solutions

\maketitle

  \hypersetup{linkcolor=black}

 \tableofcontents

\section{Introduction}
In this paper, we consider a class of viscous stationary Mean-Field Game (MFG) systems, which are used to describe the long-time behavior of games involving a large number of homogeneous rational players. A typical mathematical model is given as follows:
\begin{equation}\label{MFG-SS}
\left\{
\begin{array}{ll}
-\Delta u+H(\nabla u)+\lambda=f(m)+V(x) , &x \in \mathbb R^n,     \\
 \Delta m+\nabla\cdot (m\nabla H(\nabla u))=0,&x \in \mathbb R^n,\\
\int_{\mathbb R^n} mdx=M>0,
\end{array}
\right.
\end{equation}
where $(m,u,\lambda)$
 denotes a solution triple and $\lambda\in\mathbb R$ is a Lagrange multiplier.  Here the function $m$ represents the population density of players, $u$ is the value function of a typical player, and $M>0$ is the total mass of the population.  In particular, $H:\mathbb R^n\rightarrow \mathbb R$ is the Hamiltonian, $V(x)$ is the potential and $f(m)$ denotes the coupling cost function.  

MFG theories and systems were independently proposed by Lasry et al. \cite{Lasry} and Huang et al. \cite{Huang} to describe the dynamics of complex systems with a large number of indistinguishable agents. Drawing on ideas from statistical physics and kinetic theory, they introduced a class of coupled backward-forward parabolic equations, consisting of a Hamilton-Jacobi equation and a Fokker-Planck equation, which are now referred to as MFG systems. In the setting described below, their mathematical formulation is given by
 \begin{equation}\label{MFG-time}
\left\{
\begin{array}{ll}
u_t= -\Delta u+H(\nabla u)-V(x)-f(m), &x \in \mathbb R^n,t>0,\\
m_t=\Delta m +\nabla\cdot (m\nabla H(\nabla u)),&x \in \mathbb R^n, t>0,\\
u\vert_{t=T}=u_T, m|_{t=0}=m_0,&x\in \mathbb R^n.
\end{array}
\right.
\end{equation}
Here $m_0$ is the initial data of density and $u_T$ is the terminal data of value function $u$.  We remark that (\ref{MFG-SS}) captures the long-time behaviors of the solutions to (\ref{MFG-time}).  We next give a brief overview of the derivation of the models (\ref{MFG-SS}) and (\ref{MFG-time}) from the associated particle system.


Suppose that for $i=1,\cdots,N$, the position of the 
$i$-th player evolves according to the stochastic differential equation (SDE):
\begin{align}\label{game-process-dXti}
dX_t^i=-y^i_t dt+\sqrt{2}dB_t^i, \ \ X_0^i=x^i\in\mathbb R^n,
\end{align}
where $x^i$ denotes the initial state, $y^i_t$ is the control applied by the $i$-th player, and $B_t^i$ are independent Brownian motions.  For simplicity, we consider all agents as indistinguishable and drop the index 
$i$ in \eqref{game-process-dXti}. Each player aims to minimize the average cost, which is typically given by
\begin{align}\label{longsenseexpectation}
J(y_t):=\mathbb E\int_0^T[L(y_t)+V(X_t)+f(m(X_t))] dt + u_T(X_T),
\end{align}
where $L$ is the Lagrangian, defined as the Legendre transform of $H$ satisfying $H(p)=\sup_{y\in\mathbb R^n}(py-L(y))$ given in (\ref{MFG-time}).  Applying the dynamic programming principle in (\ref{longsenseexpectation}), one takes the mean-field limit to formulate the time-dependent system (\ref{MFG-time}).  Similarly, if we consider the following ergodic type average cost minimization problem
\begin{align*}
\bar J:=\limsup_{T\rightarrow +\infty}\inf_{Y_t}\mathbb E\bigg[\frac{1}{T}\int_0^T[L(y_t)+V(X_t)+f(m(X_t))] dt\bigg] ,
\end{align*}
then, by the dynamic programming principle, the associated ergodic system \eqref{MFG-SS} is obtained.

One major focus in the study of \eqref{MFG-time} is the global well-posedness and long-time dynamics, see \cite{Car12,Car13,CGMT13,cirantgoffi2021,GPM12,GPM13,GPV13,ahuja2016wellposedness,gangbo2022global}.  Concerning the numerical analysis of system \eqref{MFG-time}, we refer the reader to the survey \cite{achdou2020mean} and, for instance, to \cite{ACD10, achdou2012mean, CS12, CS13}.  Another active research area focuses on the analysis of solutions to the stationary problem \eqref{MFG-SS}. The systems are classified as focusing or defocusing, depending on the coupling cost.  If the coupling cost is monotone increasing and satisfies the Lasry-Lions monotonicity condition, the MFG system is of the focusing type and admits a unique stationary solution  \cite{Lasry}.  In contrast, for focusing MFG systems, e.g., with a monotone decreasing coupling cost, \eqref{MFG-SS} may have multiple solutions, rendering the existence analysis more challenging. Under certain assumptions on $f$, $V$, and 
$H$, several recent works have studied the existence of solutions to focusing MFG system (\ref{MFG-SS}), see \cite{cesaroni2018concentration, GM15,gomes2016regularity, meszaros2015variational, cirant2016stationary,cirant2025critical,kong2024blow,kong2025local}.
  
This paper addresses the existence of solutions to the focusing MFG system \eqref{MFG-SS} with aggregating coupling, assuming appropriate conditions on $H$, $f$  and $V$.   {In particular, we suppose that  Hamiltonian $H:\mathbb{R}^n\to \mathbb R$ satisfies the following conditions:
\begin{center}
\begin{itemize}
\item[(H1). ] 
$H$ is strictly convex, $H\in C^{2}\left(\mathbb R^n \setminus \{0\} \right)$ and  it is homogeneous of degree $\gamma>1$.   
\item[(H2). ] There exist $C_{H}>0$ such $\inf_{|\boldsymbol{p}|=1}H(\boldsymbol{p})\geq C_{H}>0$ .
\end{itemize}
\end{center}  
Given the above assumptions on $H$, it is straightforward to verify that there exists a constant $C'_H > C_H$ such that
\begin{equation}\label{MFG-H}
    0\leq C_{H}|\boldsymbol{p}|^{\gamma}\leq H(\boldsymbol{p})\leq C'_{H}|\boldsymbol{p}|^{\gamma}, \,\text{for all}\, \boldsymbol{p}\in \mathbb{R}^n.
\end{equation}
Moreover, the Legendre transform of $H$ is defined by $ L(\boldsymbol{q}) := \sup_{\boldsymbol{p}\in\mathbb{R}^n} [\boldsymbol{p}\cdot \boldsymbol{q} - H(\boldsymbol{p})] \, \text{for any } \boldsymbol{q}\in\mathbb{R}^n.$ 
In light of the duality between $H$ and $L$, we infer that $L \in C^{2}(\mathbb{R}^n \setminus \{0\})$, is convex, and homogeneous of degree $\gamma'$ and there exist constants $C'_L > C_L > 0$ such that 
 \begin{align}\label{MFG-L}
0\leq C_{L}|\boldsymbol{q}|^{\gamma'}\leq L(\boldsymbol{q})\leq C'_{L}|\boldsymbol{q}|^{\gamma},\,\text{for any}\,\boldsymbol{q}\in \mathbb{R}^n,
\end{align}
where $\gamma'=\frac{\gamma}{\gamma-1}$ is the conjugate number of $\gamma.$ }

\begin{remark}
    Several examples of Hamiltonian $H$ are given as follows:
\begin{itemize}
    \item[(i).]  $H({\boldsymbol p}) = C_H |{\boldsymbol p}|^{\gamma}$ with constants $\gamma > 1$ and $C_H > 0$;
       \item[(ii).]  $H(\boldsymbol{p}) =\sum_{i=1}^n C^i_{H} |p_i|^{\gamma}$ with $\boldsymbol{p}:=(p_1,\cdots,p_n)^T$  and constants $\gamma > 1$, $C^i_H > 0$.
\end{itemize}
  
\end{remark}

  As shown in \cite{cesaroni2018concentration,cirant2025critical}, nonlinear Schr\"{o}dinger equations are subsystems of (\ref{MFG-SS}).  
There is a rich literature (see e.g. \cite{guo2014massRobert,guo2016energy,bartsch2019multiple,zhang2022normalized}) devoted to the study of normalized solutions to Schr\"{o}dinger equations via the variational stuctures.  
The pioneering work on the existence of solutions to viscous ergodic MFG systems (\ref{MFG-SS}) with aggregating coupling, via the variational method, can be traced to \cite{cesaroni2018concentration}.  In detail,   
they point out that the solution of \eqref{MFG-SS} corresponds to the critical points of the following constrained minimization problem
\begin{align}\label{min_prob}
e_{\alpha,M}:=\inf_{(m,w)\in\mathcal{A}\cap \left\{m: \Vert m\Vert_{L^{1}(\mathbb R^n)}=M\right\}}\mathcal{J}(m,w),\,\,	
\end{align}
where $M>0$ and
\begin{align*}
	\mathcal{J}(m,w)=\int_{\mathbb R^n} mL\bigg(-\frac{w}{m}\bigg)\,dx +\int_{\mathbb R^n} V(x)m\,dx-\int_{\mathbb R^n}F(m)\,dx.
\end{align*}
Here Lagrangian $L$ is defined by
\begin{align}\label{general-Lagrangian}
	L\bigg(-\frac{w}{m}\bigg):=\left\{\begin{array}{ll}
		\sup\limits_{\boldsymbol{p}\in\mathbb R^n}\big(-\frac{\boldsymbol{p}\cdot w}{m}-H(\boldsymbol{p})\big),&m>0,\\
		0,&(m,w)=(0,0),\\
		+\infty,&\text{otherwise},
	\end{array}
	\right.
\end{align}
 $mL\big(-\frac{w}{m}\big)$ is the Legendre transform of $mH(p)$. The admissible set $\mathcal{A}$ is defined as
\begin{align}\label{constraint-set-A}
	\mathcal{A}:=\Big\{&(m,w)\in  (L^1(\mathbb R^n)\cap W^{1,\hat q}(\mathbb R^n))\times L^{1}(\mathbb R^n)\nonumber\\
	~&\text{s. t. }\int_{\mathbb R^n}\nabla m\cdot\nabla\varphi\,dx=\int_{\mathbb R^n}w\cdot\nabla\varphi\, dx,\forall \varphi\in C_c^{\infty}(\mathbb R^n),\int_{\mathbb R^n}V(x) m\,dx<+\infty,~
	m\geq,\not\equiv 0\text{~a.e.~in}\,\mathbb{R}^n\Big\},
\end{align}
where $\hat q$ is given by
\begin{equation}\label{hatqconstraint}
	\hat q:=\begin{cases}
		\frac{n}{n-\gamma'+1} &\text{ if }\gamma'<n,\\
		\in\left(\frac{2n}{n+2},n\right)&\text{ if }\gamma'=n,\\
		\gamma' &\text{ if }\gamma'> n.
	\end{cases}
\end{equation}
Assume that Hamiltonian $H$ and potential $V$ are perturbations of polynomial growth, and that the local coupling $f:[0,+\infty)\rightarrow \mathbb R$  is locally Lipschitz continuous and satisfies
\begin{align*}
-C_fm^{\alpha}-C_{0}\leq f(m)\leq -C_fm^{\alpha}+C_{0},~~\exists\, C_f,C_{0},\alpha>0,
\end{align*}
then, Cirant and Cesaroni \cite{cesaroni2018concentration} showed that (\ref{MFG-SS}) admits a least-energy solution under the condition $0<\alpha<\alpha_{*}:=\frac{\gamma'}{n}$, which is referred to as the mass-subcritical exponent case.  Recently, Cirant et al. \cite{cirant2025critical} analyzed the existence of a least-energy solution to (\ref{MFG-SS}) in the mass-critical case, i.e., $\alpha=\alpha_*.$  However, in the mass-supercritical exponent case, i.e., $\alpha>\alpha_{*}$, the functional $\mathcal J(m,w)$ is unbounded from below on $\mathcal{A}\cap \{m:\Vert m\Vert_{L^1}=M\}$ with $\mathcal A$ given in (\ref{constraint-set-A}), which renders the study of solutions to (\ref{MFG-SS}) via the variational approach particularly challenging.  We mention that there exists another critical exponent $\alpha^{*}:=\frac{\gamma'}{n-\gamma'}$ (if $n\leq \gamma'$, we define $\alpha^{*}:=+\infty$), which arises from the Sobolev embedding when estimating the density $m$ via Fokker-Planck equation.  For the viscous MFG system with Neumann boundary conditions in a bounded domain, Cirant et al. \cite{cirant2023ergodic} established the existence of local minimizers ranging from the mass-supercritical up to the Sobolev-critical case. By contrast, it is believed that local minimizers do not exist in whole space $\mathbb R^n$ and the existence of mountain-pass type solutions to (\ref{MFG-SS}) remains largely open, both in the whole space and in bounded domains. 
       
In this paper, we focus on the existence of {\textit{mountain-pass}} type solutions to \eqref{MFG-SS} in the {{mass-supercritical}} regime, i.e.,   $\alpha>\alpha_*=\frac{\gamma'}{n}$ with $n\geq 2$.  Moreover, we restrict our attention to the Sobolev-subcritical regime, i.e., $\alpha<\alpha^{*}$. As discussed above, in this setting, the functional $\mathcal{J}(m,w)$ is unbounded from below, and consequently the variational approach employed in \cite{cesaroni2018concentration,cirant2025critical} to solve \eqref{MFG-SS} directly via the constrained minimization problem \eqref{min_prob} is no longer applicable.  Furthermore, since the functional $\mathcal{J}(m,w)$ is non-smooth, many classical methods developed for the $L^2$ mass-supercritical Schr\"{o}dinger equation such as those in \cite{Jeanjean1997,wei2022wu,Bartsch2021Verzini} and the references therein, cannot be applied in the present framework. To overcome these difficulties, we adopt an $L^{1+\alpha}$-constrained minimization framework, rather than an $L^1$-constraint one; the auxiliary problem will be analyzed in Section~\ref{MFG_sys}. 
We next state our main results in the following subsection.

\subsection{Main results}\label{mainresults11}
Our main contribution is the existence of mountain-pass solutions to the MFG system \eqref{MFG-SS} in the mass-supercritical regime. These findings complement the results in \cite{cesaroni2018concentration} and \cite{cirant2025critical}, and provide a detailed discussion of the mass supercritical case. Furthermore, as a byproduct, we establish optimal Gagliardo-Nirenberg inequality in the mass-super critical regime.  More specifically, we consider \eqref{MFG-SS} with $V \equiv 0$, $f(m) := -m^{\alpha}$ for $\alpha > \alpha_{*}$, and $H$ satisfying conditions (H1) and (H2) stated above.   

The existence of mountain-pass solutions for MFG system \eqref{MFG-SS} can be transformed to find the formal critical points of the functional $\mathcal{J}_0$ constrained on $\mathcal{A}\cap \left\{m: \Vert m\Vert_{L^{1}(\mathbb R^n)}=M\right\}$, which are associated with the following mountain-pass level
\begin{align}\label{constrain_mountain_pass__problem}
	e_{MP}:=\inf_{h\in \Gamma}\max_{t\in[0,1]} \mathcal{J}_{0} (h(t)), 
\end{align} 
where
\begin{align}\label{L^1_constrain_functional}
	\mathcal{J}_{0} (m,w):=\int_{\R^n} mL\left( -\frac{w}{m}\right)\, dx-\frac{1}{1+\alpha}\int_{\R^n}m ^{1+\alpha}\, dx,
\end{align}
and the path set is defined as 
\begin{align}\label{mountain_pass_path_set}
	\Gamma:=\bigg\{  h\in C\left([0,1],\mathcal{A}\cap \left\{m: \Vert m\Vert_{L^{1}(\mathbb R^n)}=M \right\}\right): h(0)=(m_{1},w_{1})\,\,\text{and}\,\, h(1)=(m_{2},w_{2})\bigg\},
\end{align} 
with $$\int_{\R^n} m_{1}L\left( -\frac{w_{1}}{m_{1}}\right)\, dx\leq R_{0},\ \int_{\R^n} m_{2}L\left(- \frac{w_{2}}{m_{2}}\right)\, dx\geq 2R_{0}, \ \mathcal{J}_{0} (m_{1},w_{1})>0 \text{ and } \mathcal{J}_{0} (m_{2},w_{2})<0$$ for some constant $R_{0}>0$.    

In what follows, we assume that $V(x)$ is a locally H\"{o}lder continuous potential that satisfying  the following conditions:
\begin{center}
\begin{itemize}
\item[(V1). ] 
$V(x)\in L^\infty_{\rm loc}(\mathbb{R}^n)$ and $V(x)\rightarrow +\infty$ as $|x|\rightarrow+\infty$.   
\item[(V2). ] there exist constants $C_1,C_2, K>0$  such that 
\begin{subequations}\label{V2mainasumotiononv}
\begin{align}
    &0< V(x)\leq C_2|x|^b, \ \ |x|\geq K; \label{V2mainassumption_1}\\
   &0< C_1\leq \frac{V(x+y)}{V(x)}\leq C_2,\text{~for~all~}|x|\geq K \text{~with~}|y|<2; \label{V2mainassumption_2}\\
   &\sup_{\nu\in[0,1]}V(\nu x)\leq C_2V(x)\text{~for~}|x|\geq K.\label{V2mainassumption_3}
\end{align}
\end{subequations}
\end{itemize}
\end{center}  
\begin{remark}\label{rmark12mar22}
    Several examples of potential $V$ are given as follows:
\begin{itemize}
    \item[(i).]  $V = C_V |x|^{b}$ with constants $b > 0$ and $C_V > 0$;
       \item[(ii).]  $V=C_V\ln (1+|x|)$ with constant $C_V>0$.
\end{itemize}
\end{remark}

Concerning the min-max problem given in (\ref{constrain_mountain_pass__problem}) and the existence of mountain-pass solutions to (\ref{MFG-SS}), we obtain
\begin{theorem}\label{Existience_of_MPS_MFG}
For any $\gamma'>1$, letting $\alpha_{*}:=\frac{\gamma'}{n}<\alpha<\alpha^{*}$, where $\alpha^*:=\frac{\gamma'}{n-\gamma'}$ if $n> \gamma'$ and $\alpha^*:=+\infty$ if $n\leq \gamma'$, then there exists a classical solution $(\hat u,\hat m, \hat\lambda)\in C^{2}(\mathbb{R}^n)\times W^{1,p}(\mathbb{R}^n)\times \mathbb{R}$, $\forall p>1,$ to the following MFG system
		\begin{align}\label{L^1_constrain_equation}
		\left\{\begin{array}{ll}
			-\Delta u+H(\nabla u)+\lambda=-m^{\alpha},&x\in\mathbb R^n,\\
			\Delta m+\nabla\cdot (m\nabla H(\nabla u))=0,&x\in\mathbb R^n,\\
		 \int_{\mathbb R^n}m \,dx=M.
		\end{array}
		\right.
	\end{align}
     Moreover, the pair $(\hat m, \hat w)\in \mathcal{A}\cap \left\{m: \Vert m\Vert_{L^{1}(\mathbb R^n)}=M\right\}$ with $\hat w = -\hat m\nabla H(\nabla \hat u)$ and $\mathcal{J}_0(\hat m,\hat w)=e_{MP}$, corresponding to the mountain-pass level given in \eqref{constrain_mountain_pass__problem}. 
\end{theorem}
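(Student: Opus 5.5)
The plan follows the strategy sketched in the abstract. We first build the solution as a ground state of an $L^{1+\alpha}$-constrained problem, and only afterwards identify its energy with the mountain-pass level $e_{MP}$.

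\emph{Step 1: auxiliary problem with a coercive potential.} For $V$ satisfying (V1)--(V2), and later $\epsilon V$ with $\epsilon\to 0$, we minimize
\[
\int_{\R^n} mL\Big(-\frac{w}{m}\Big)dx+\int_{\R^n} V m\,dx
\]
over $\mathcal A$ under the two constraints $\|m\|_{L^1}=M$ and $\int_{\R^n} m^{1+\alpha}dx=1$ (or a fixed level). Since the nonlinear term is now fixed, the functional is bounded below. The coercivity of $V$ gives tightness. The Fokker--Planck relation $\nabla m=w$, combined with the bound on $\int mL(-w/m)$, controls $\|m\|_{W^{1,\hat q}}$. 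For $\alpha<\alpha^*$ this yields compact embedding into $L^{1+\alpha}$, so a minimizer exists.

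To extract the MFG system from this non-smooth problem, we use the two-stage linearization. We fix the minimizer $\bar m$ and linearize the constraint $\int m^{1+\alpha}$ around it, turning it into the linear constraint $\int \bar m^{\alpha} m$. The resulting convex problem in $(m,w)$ is handled by convex duality in the spirit of \cite{cesaroni2018concentration}. This produces $u$ solving the Hamilton--Jacobi equation with right-hand side $\mu\bar m^{\alpha}+V$, and two Lagrange multipliers $\lambda,\mu$, with $w=-m\nabla H(\nabla u)$. Scaling then normalizes $\mu$ to $-1$.

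\emph{Step 2: vanishing potential limit.} We let $\epsilon\to0$ and need uniform bounds for $(u_\epsilon,m_\epsilon,\lambda_\epsilon)$. Pohozaev-type identities, obtained by testing with $x\cdot\nabla u$ and $x\cdot\nabla m$ and using homogeneity of $H$ and $L$, relate
\[
\int mL,\qquad \int m^{1+\alpha},\qquad \lambda M,\qquad \int x\cdot\nabla V\, m .
\]
In the supercritical range $\alpha>\gamma'/n$ these relations fix the signs and give uniform bounds on $\lambda_\epsilon$, bounded away from zero. Maximal $L^q$ regularity for the Hamilton--Jacobi equation (Cirant--Goffi type) then gives local $W^{2,q}$ bounds on $u_\epsilon$ and $W^{1,p}$ bounds on $m_\epsilon$.

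We expect the main obstacle here to be loss of mass at infinity. To prevent vanishing we would translate the $m_\epsilon$ to recenter them, and use the bound $\lambda_\epsilon<-c<0$ to obtain exponential-type decay of $m_\epsilon$ through a Lyapunov/supersolution argument on the Fokker--Planck equation. That decay gives tightness. Passing to the limit produces a classical solution of \eqref{L^1_constrain_equation}, with $C^2$ regularity from Schauder theory, and shows that $(\hat m,\hat w)$ attains the optimal Gagliardo--Nirenberg-type constant
\[
\inf \frac{\big(\int mL\big)^{a}\,M^{b}}{\int m^{1+\alpha}}.
\]

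\emph{Step 3: mountain-pass identification.} We use the mass-preserving dilation $m_t(x)=t^n m(tx)$, $w_t(x)=t^{n+1}w(tx)$. Under it, $\int m_tL(-w_t/m_t)=t^{\gamma'}\int mL$ and $\int m_t^{1+\alpha}=t^{n\alpha}\int m^{1+\alpha}$, with $n\alpha>\gamma'$. The fiber map $t\mapsto \mathcal J_0(m_t,w_t)$ therefore has a unique maximum, attained on the Pohozaev manifold where $\gamma'\int mL=\frac{n\alpha}{1+\alpha}\int m^{1+\alpha}$.

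The Gagliardo--Nirenberg inequality gives $\mathcal J_0\ge c>0$ on the sphere $\{\int mL=R\}$ for small $R$, which produces the geometry required of $R_0$ and the endpoints in $\Gamma$. For the upper bound $e_{MP}\le\mathcal J_0(\hat m,\hat w)$, we take the dilation path through $(\hat m,\hat w)$; along it the maximum is attained exactly at $(\hat m,\hat w)$. For the lower bound, every path in $\Gamma$ crosses the Pohozaev manifold by continuity of $\int mL$ and $\int m^{1+\alpha}$ along $h$. On that manifold $\mathcal J_0$ is bounded below by the optimal GN constant, which equals $\mathcal J_0(\hat m,\hat w)$ because $(\hat m,\hat w)$ is an optimizer. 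Combining both bounds gives $\mathcal J_0(\hat m,\hat w)=e_{MP}$.
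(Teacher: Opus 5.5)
Your Step~3 is essentially the paper's argument: fiber maps under $t^nm(tx)$, the Pohozaev functional $P$, the GN lower bound on $\{P=0\}$, and every path crossing $\{P=0\}$. The one omission there is that paths in $\Gamma$ have prescribed endpoints. Unless you choose $(m_1,w_1),(m_2,w_2)$ on the dilation curve of $(\hat m,\hat w)$, you need the endpoint-freeing Lemma~\ref{mp_equi}.

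The genuine gaps are in Steps~1 and~2. Both come from imposing \emph{both} $\|m\|_{L^1}=M$ and $\int m^{1+\alpha}=1$.

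\textbf{Step~1.} The two-stage linearization closes the system only if the minimizer $\bar m$ of the nonconvex problem also minimizes the linearized convex problem. Otherwise duality gives an HJ equation with coupling $\bar m^\alpha$ whose Fokker--Planck partner is some other $m'$. You take this for granted, but the paper's proof of it (Proposition~\ref{exi_min_appro_re}) does not transfer to your setting:
\begin{itemize}
\item In the paper, a linearized minimizer satisfies $\|m'\|_{L^{1+\alpha}}\ge1$ by H\"older. The pair $(m',w')/\|m'\|_{L^{1+\alpha}}$ is then an admissible competitor with energy $\mathcal E_\delta(m',w')/\|m'\|_{L^{1+\alpha}}$.
\item This works only because $\mathcal E_\delta$ in \eqref{delta_energy_functional} is $1$-homogeneous and the $L^{1+\alpha}$ norm is the sole constraint.
\item With $\int m=M$ also fixed, constant multiples are forbidden. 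The mass-preserving dilation would need $t<1$; it lowers $\int mL$ but pushes mass outward and changes $\int Vm$ uncontrollably, so no comparison follows.
\end{itemize}
This is why the paper puts $\int m$ into the energy (the ``$+1$'') instead of constraining it. The $L^1$ multiplier produced in the second stage is then shown to vanish. Testing the HJ equation with $m$ gives $\mu_\delta=e_\delta>0$, so the aggregating sign is automatic.

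In your version, write the equation as $-\Delta u+H(\nabla u)+\lambda=\mu\bar m^\alpha+V$. The same test gives $\mu=\lambda M-\mathcal E(\bar m,\bar w)$, so the sign needed to ``normalize $\mu$ to $-1$'' is undetermined. Two further points:
\begin{itemize}
\item When $\gamma'\le n$ you also need the mollified constraint \eqref{delta_problem_tilde} and Lemma~\ref{blowupanalysismlinfboundcritical}. Otherwise $\bar m^\alpha$ need not be locally H\"older, and Lemma~\ref{lemma22preliminary} does not produce $u$.
\item The constraint in $\mathcal A$ is $-\Delta m+\nabla\cdot w=0$ weakly, not $\nabla m=w$.
\end{itemize}

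\textbf{Step~2.} Recentring together with $\lambda_\epsilon<-c$ does not yield uniform exponential decay. The supersolution argument requires $|\mu_\epsilon|m_\epsilon^\alpha\le c/2$ outside a fixed ball around the recentring point, uniformly in $\epsilon$. That is exactly the exclusion of mass escaping to infinity (dichotomy), which is the tightness you are trying to prove.

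The paper never proves tightness of the sequence. Instead:
\begin{itemize}
\item After Lions' lemma and translation (Lemma~\ref{exi_potential_free_MFG}), it takes the weak limit $(u_0,m_0)$. This is a nontrivial solution of the potential-free system with $\mu_0=\lim e_\delta=e_0$.
\item Decay is proved only for this single solution (Lemma~\ref{mdecaylemma}).
\item Testing its HJ equation with $m_0$ gives $\mathcal E_0(m_0,w_0)=e_0\int m_0^{1+\alpha}$.
\item The $1$-homogeneity of $\mathcal E_0$ and minimality of $e_0$ give $e_0\le e_0\|m_0\|_{L^{1+\alpha}}^{\alpha}$. Hence $\int m_0^{1+\alpha}=1$ and no mass is lost.
\end{itemize}
This again relies on a single homogeneous constraint. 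With your two constraints you would have to recover both $\int m_0=M$ and $\int m_0^{1+\alpha}=1$ in the limit, and nothing in your plan does that.

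The repair is to adopt the paper's auxiliary problem \eqref{delta_problem}: only $\|m\|_{L^{1+\alpha}}=1$ is imposed, with energy $\int mL+\int(\delta V+1)m$. The mass $M$ is then imposed only at the end, through the explicit scaling of Lemma~\ref{resolMFG}.
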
  

To prove Theorem \ref{Existience_of_MPS_MFG}, instead of tackling the problem (\ref{constrain_mountain_pass__problem}) directly, we first consider the following constrained minimization problem in $L^{1+\alpha}$:
\begin{align}\label{delta_problem}
	e_{\delta}:=\inf_{(m,w)\in \mathcal{A}\cap \left\{m: \Vert m\Vert_{L^{1+\alpha}(\mathbb R^n)}=1 \right\}}\mathcal E_{\delta}(m,w),
\end{align}
with
\begin{align}\label{delta_energy_functional}
	\mathcal E_{\delta}(m,w):=
	\int_{\mathbb R^n}mL\left (-\frac{w}{m}\right)\,dx+\int_{\mathbb R^n}(\delta V(x)+1)m\,dx,
\end{align}
{where $\delta \in(0,1)$ is a parameter and locally H\"{o}lder continuous potential $V$ satisfies (V1)-(V2). }
{\noindent  We first prove that for any $\alpha\in(\alpha_{*},\alpha^{*})$,  there exists a minimizer $(m,w)$ for problem \eqref{delta_problem} via a regularization argument as shown in \cite{cesaroni2018concentration}.  More importantly,  in Lemma \ref{potential_MFG} below, we prove the existence of a Euler-Lagrange multiplier $\mu$ and a function $u\in C^2(\mathbb{R}^n)$ such that $(u,m,\mu)$ satisfies the following auxiliary MFG  system:
\begin{align}\label{MFG_mass_supercritical-0}
\left\{\begin{array}{ll}
-\Delta u+H(\nabla u)+\mu m^{\alpha}=\delta V(x)+1,&x\in\mathbb R^n,\\
\Delta m+\nabla\cdot (m\nabla H(\nabla u))=0,&x\in\mathbb R^n,\\
w=-m\nabla H(\nabla u),\ \int_{\mathbb R^n}m^{1+\alpha}\,dx=1.
\end{array}
\right.
\end{align}  
The main obstacle lies in the fact that the existence results for ergodic Hamilton–Jacobi equations established in \cite{GuyJoao2016} (see also Lemma~\ref{lemma22preliminary} below) cannot be directly applied owing to the presence of multiplier $\mu$, which forces us to modify the arguments employed in previous works \cite{cesaroni2018concentration,cirant2025critical} for tackling the minimization problem \eqref{min_prob} subject to mass-subcritical and mass-critical exponents.   {By a suitable scaling argument, the minimization problem~(\ref{min_prob}) can be reformulated as the $L^{1+\alpha}$-constrained problem~(\ref{delta_problem}).} After obtaining a minimizer  of (\ref{delta_problem}), the derivation of the associated value function $u$
 via Lemma~\ref{lemma22preliminary} would, however, require at least the uniform positivity condition, $\inf_{x\in\mathbb R^n}m>0$.  Such a condition cannot be satisfied in the present setting  since $m\in L^{1+\alpha}$ and $m\geq 0.$  To overcome this issue, we introduce a two-stage linearization argument on the energy functional to establish the $u$-equation in \eqref{MFG_mass_supercritical-0}.  However, an additional multiplier arises in this process; to show that it must vanish, we employ an appropriate scaling argument combined with variational characterizations of the multipliers.  All details of this procedure are provided in the proof of Lemma \ref{potential_MFG}. 
}

Once we obtain a solution of the auxiliary system  \eqref{MFG_mass_supercritical-0} by studying (\ref{min_prob}).  Then, by taking the limit  $\delta\rightarrow 0^{+}$ in (\ref{delta_problem}), we establish in Lemma \ref{Existience_of_MFG_free_potential} the existence of a classical solution to the auxiliary problem without the potential for MFG system \eqref{MFG_mass_supercritical-0} with $\delta=0$ and the $L^{1+\alpha}$ constraint under the mass-supercritical case.
 However, our main goal is to investigate the existence of a mountain pass solution to the MFG system \eqref{L^1_constrain_equation} under an $L^1$ constraint.  To fill in the gap, we apply a scaling argument and Pohozaev identities to study the relationship between the systems \eqref{L^1_constrain_equation} and \eqref{e_0_min} below. As a consequence, we establish an optimal Gagliardo–Nirenberg type inequality subject to the mass supercritical exponent and the result is summarized as
\begin{theorem}\label{GNinequalitythm}
      Let   $\alpha\in(0,\alpha^{*})$ and $\mathcal{A}$ be given by \eqref{constraint-set-A} with $V$ satisfying \textup{(V1)}-\textup{(V2)}  in (\ref{V2mainasumotiononv}). Then, the best constant
\begin{align}\label{Gn_Inequality}
	\Gamma_{\alpha}:=\inf_{(m,w)\in \mathcal{A}}\frac{\left(\int_{\mathbb R^n}m L\left(-\frac{w}{m}\right)\,dx\right)^{\frac{n\alpha}{\gamma'}}\left(\int_{\mathbb R^n}m\,dx\right)^{\frac{(\alpha+1)\gamma'-n\alpha}{\gamma'}}}{\int_{\mathbb R^n}m^{\alpha+1}\, dx}
\end{align}
can be attained. 
In particular,  {when $\alpha\in(\alpha_*,\alpha^*)$}, the pair $(\hat m, \hat w)$ given in Theorem \ref{Existience_of_MPS_MFG} is an optimizer of $\Gamma_{\alpha}$. 
\end{theorem}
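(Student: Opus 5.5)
The plan is to reduce the attainability of $\Gamma_\alpha$ to a constrained minimization without potential, and then identify the minimizer with the pair $(\hat m,\hat w)$ from Theorem \ref{Existience_of_MPS_MFG}.

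\textbf{Step 1: scaling invariance.} First I will use the two-parameter scaling invariance of the quotient in \eqref{Gn_Inequality}. For $(m,w)\in\mathcal A$ and $a,s>0$, set
\[
m_{a,s}(x)=a\,m(sx),\qquad w_{a,s}(x)=as\,w(sx).
\]
This keeps the weak relation $\nabla m=w$ and, since $V$ is not involved in the quotient, it also keeps admissibility modulo the moment condition. The three terms then scale as follows:
\[
\int m_{a,s}L\Big(-\frac{w_{a,s}}{m_{a,s}}\Big)\,dx=a s^{\gamma'-n}\!\int mL\Big(-\frac wm\Big)\,dx,\quad
\int m_{a,s}\,dx=a s^{-n}\!\int m\,dx,\quad
\int m_{a,s}^{1+\alpha}\,dx=a^{1+\alpha}s^{-n}\!\int m^{1+\alpha}\,dx.
\]
The exponents in \eqref{Gn_Inequality} are chosen exactly so that the quotient is invariant. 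So I may normalize $\int m^{1+\alpha}\,dx=1$ and $\int m\,dx=1$, or alternatively normalize $\int m^{1+\alpha}\,dx=1$ and a fixed kinetic energy.

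I will also have to handle the moment condition $\int V m<\infty$ in $\mathcal A$. For this I plan to show that the infimum is unchanged when taken over compactly supported (or rapidly decaying) $m$, by a truncation and density argument.

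\textbf{Step 2: minimization with the $L^{1+\alpha}$ constraint.} Next I minimize over the $L^{1+\alpha}$-sphere. By Young's inequality applied to the product of powers, for $\alpha<\alpha^*$ the quotient controls
\[
e_0:=\inf\Big\{\int mL\Big(-\frac wm\Big)\,dx+\int m\,dx\ :\ \int m^{1+\alpha}\,dx=1\Big\}.
\]
Optimizing the sum over the scaling $s$ (with $a$ tied to $s$ so that the $L^{1+\alpha}$ norm stays fixed) gives $e_0=c(n,\alpha,\gamma')\,\Gamma_\alpha^{\theta}$ for an explicit constant and exponent. Hence an optimizer of $e_0$ is an optimizer of $\Gamma_\alpha$ after scaling.

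To obtain an optimizer of $e_0$, I use the $\delta$-problem \eqref{delta_problem}. Its minimizers $(m_\delta,w_\delta)$ solve \eqref{MFG_mass_supercritical-0} (Lemma \ref{potential_MFG}), and $e_\delta\downarrow e_0$ because $\delta\int Vm\to0$ on compactly supported competitors. The limit $\delta\to0^+$ (Lemma \ref{Existience_of_MFG_free_potential}) then gives a solution with $\delta=0$. The main obstacle is compactness, namely preventing vanishing and loss of the $L^{1+\alpha}$ constraint. I would handle it with the uniform $W^{1,p}$ bounds on $m_\delta$ coming from maximal regularity for the Hamilton--Jacobi equation. These give equicontinuity, and the normalization $\|m_\delta\|_{L^{1+\alpha}}=1$ together with $L^1$ and $W^{1,\hat q}$ bounds forces $\sup m_\delta\ge c>0$. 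After translating to the maximum point, I pass to a local limit. Lower semicontinuity of the convex term $mL(-w/m)$ and Fatou give energy at most $e_0$. Strict subadditivity of the scaling, or the Pohozaev identity relating $\int mL$, $\int m$ and $\mu$, shows that no mass of $\int m^{1+\alpha}$ escapes, so the limit is admissible with norm $1$ and is a minimizer. The case $\alpha\le\alpha_*$ is the same argument and needs no mass-supercriticality; this is the range $(0,\alpha^*)$ in the statement.

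\textbf{Step 3: identification with $(\hat m,\hat w)$.} For $\alpha\in(\alpha_*,\alpha^*)$, the pair $(\hat m,\hat w)$ of Theorem \ref{Existience_of_MPS_MFG} is obtained by rescaling the $e_0$-minimizer to mass $M$, via the scaling of Step 1 with the multiplier $\mu$ absorbed. Scaling invariance then shows that it attains $\Gamma_\alpha$. The Pohozaev identities are used to check that the scaling parameters are consistent, i.e. that $\lambda$ and the $u$-equation transform correctly. If $(\hat m,\hat w)$ in Theorem \ref{Existience_of_MPS_MFG} is instead defined intrinsically by $e_{MP}$, I would argue that it coincides with this rescaled minimizer: $e_{MP}$ equals the maximum over the fiber $s\mapsto\mathcal J_0$ of the scaled optimizer, and this maximum is computed explicitly in terms of $\Gamma_\alpha$.
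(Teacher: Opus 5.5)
Your plan follows the paper's proof: scaling reduces $\Gamma_\alpha$ to $e_0$ (Lemma \ref{sect3_lemm32}), $e_0$ is attained as the $\delta\to0^+$ limit of the coercive-potential minimizers with the Pohozaev identities restoring $\int m_0^{1+\alpha}=1$ (Lemmas \ref{re_energy}--\ref{exi_potential_free_MFG}), and $(\hat m,\hat w)$ is the rescaling of that minimizer to mass $M$ (Lemma \ref{resolMFG}); your truncation remark about keeping $\int Vm<\infty$ under dilations addresses a point the paper passes over. Your one deviation is translating to maximum points of $m_\delta$ instead of using Lions' lemma, and it needs two things: a uniform-in-$\delta$ modulus of continuity on all of $\mathbb R^n$, which local maximal regularity does not give near points where $\delta V$ is large (it has to come from Lemma \ref{blowupanalysismlinfboundcritical}), and a proof that $\delta V(x_\delta)\to0$ so the potential drops out of the limit equation; the paper's Lions-lemma route gets both at once, since a ball $B_R(y_\delta)$ carrying mass $\zeta$, together with (V2) and $\delta\int Vm_\delta\to0$, forces $\delta V(y_\delta)\to0$.
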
  
We recover the Gagliardo-Nirenberg type inequality \eqref{Gn_Inequality} established in \cite{cesaroni2018concentration,cirant2025critical} and further improve upon their results. More precisely, in those works the inequality holds under the assumption that $m$ exhibits polynomial decay. In contrast, the mild condition introduced in \eqref{V2mainasumotiononv} relaxes the restriction on $m$ in \eqref{constraint-set-A}, thereby allowing densities with significantly slower decay (see Remark~\ref{rmark12mar22}) while still ensuring the validity of \eqref{Gn_Inequality}.  We remark that 
problem \eqref{Gn_Inequality} is scaling invariant under $(s^{\beta}m(tx),s^{\beta+1}w(tx))$ for any $s>0$, $t>0$ and $\beta>0.$  As a consequence, it is straightforward to see that \eqref{Gn_Inequality} is equivalent to the following minimization problem:
\begin{align}\label{Gn_Inequality_constrain}
	\Gamma_{\alpha}:=\inf_{(m,w)\in\mathcal{A}\cap \left\{m: \Vert m\Vert_{L^{1+\alpha}(\mathbb R^n)}=1 \right\}}{\left(\int_{\mathbb R^n}m L\left(-\frac{w}{m}\right)\,dx\right)^{\frac{n\alpha}{\gamma'}}\left(\int_{\mathbb R^n}m\,dx\right)^{\frac{(\alpha+1)\gamma'-n\alpha}{\gamma'}}}, \ \ \alpha\in\left(0,\alpha^{*}\right).
\end{align}
Moreover, one can obtain { from Lemma \ref{sect3_lemm32}} that minimization problem \eqref{Gn_Inequality_constrain}
	is equivalent to the following problem
\begin{align}\label{e_0_min}
	e_{0}:=\inf_{(m,w)\in \mathcal{A}\cap \left\{m: \Vert m\Vert_{L^{1+\alpha}(\mathbb R^n)}=1 \right\}}\int_{\mathbb R^n}mL\left(-\frac{w}{m}\right)\,dx+\int_{\mathbb R^n}m\,dx.
\end{align}
 Actually, the minimization problem \eqref{Gn_Inequality_constrain}, and thus \eqref{Gn_Inequality}, is attained by a minimizer $(m_{0},w_{0})\in  \mathcal{A}\cap \left\{m: \Vert m\Vert_{L^{1+\alpha}(\mathbb R^n)}=1 \right\}$ of $e_{0}$.
Moreover, we show that $(m_{0},w_{0})$ corresponds to a solution of the auxiliary potential-free MFG system \eqref{mass_supercritical_problem_free_potential} below. On the other hand, we obtain that $(\hat m, \hat w)\in  \mathcal{A}\cap \left\{m: \Vert m\Vert_{L^{1}(\mathbb R^n)}=M\right\}$, derived by a scaling of $(m_{0},w_{0})$, is also a minimizer of $\Gamma_{\alpha}$, which corresponds to a solution of MFG system \eqref{L^1_constrain_equation}.  Finally,  we show that the solution to \eqref{L^1_constrain_equation} obtained above is indeed a mountain-pass solution by proving  that $\mathcal{J}_{0}( \hat m, \hat w )=e_{MP}$.



\medskip

{
With the existence of a solution
$(\hat u, \hat m, \hat \lambda)$ to \eqref{L^1_constrain_equation}, and  
 an optimizer  $(\hat m, \hat w)$ of
\eqref{Gn_Inequality}, we further classify the optimizer $(\hat m, \hat w)$,
as well as the associated solution $( \hat u, \hat m,\hat \lambda)$ to
\eqref{L^1_constrain_equation}, in terms of the parameter $\alpha$.
This classification recovers several results from
\cite{cesaroni2018concentration} and \cite{cirant2025critical}.
}

\begin{proposition}\label{cover_0_super} 
Let $\alpha\in(0,\alpha^{*})$,
then we have the following properties:
\begin{itemize}
    \item[(i).] When $\alpha\in(0,\alpha_{*})$, the functional $\mathcal{J}_{0}$ given in (\ref{L^1_constrain_functional})
has a global minimizer $(\hat m,\hat w)$ on  $\mathcal{A}\cap \left\{m: \Vert m\Vert_{L^{1}(\mathbb R^n)}=M \right\}$. Moreover, there exists $(\hat u, \hat \lambda)\in(C^2(\mathbb{R}^n)\times\mathbb{R})$ with $\hat w = -\hat m\nabla H(\nabla \hat u)$, such that 
 $(\hat u,\hat m,\hat \lambda)$ is a ground state of  MFG system  \eqref{L^1_constrain_equation}.
   
    \item[(ii).] When $\alpha=\alpha_{*}$, if $M<M^{*}$ with $M^{*}=\Big(\Gamma_{\alpha_{*}}(1+\alpha_{*})\Big)^{\frac{1}{\alpha_{*}}}$, then
    MFG system \eqref{L^1_constrain_equation} does not admit any classical solution; if $M=M^{*}$, then $\mathcal{J}_{0}$ has a global minimizer $(\hat m,\hat w)$ on $\mathcal{A}\cap \left\{m: \Vert m\Vert_{L^{1}(\mathbb R^n)}=M^{*} \right\}$, and $(\hat u,\hat m,\hat \lambda)$ with $\hat w = -\hat m\nabla H(\nabla \hat u)$ is a ground state  of MFG system \eqref{L^1_constrain_equation}; when $M>M^{*}$, then $\mathcal{J}_{0}$ does not have any global minimizer or  mountain-pass type optimizer on $\mathcal{A}\cap \left\{m: \Vert m\Vert_{L^{1}(\mathbb R^n)}=M \right\}$.
    \item [(iii).] When $\alpha\in(\alpha_{*},\alpha^{*})$,  the minimax value $e_{MP}$ defined by  (\ref{constrain_mountain_pass__problem}) of $\mathcal{J}_{0}$ on $\mathcal{A}\cap \left\{m: \Vert m\Vert_{L^{1}(\mathbb R^n)}=M \right\}$  has an  optimizer $(\hat m,\hat w)$, and  $(\hat u,\hat m,\hat \lambda)$ with $\hat w = -\hat m\nabla H(\nabla \hat u)$ is  a mountain-pass  solution of MFG system \eqref{L^1_constrain_equation}.
\end{itemize}
\end{proposition}  

Proposition \ref{cover_0_super} classifies the optimizer of $\Gamma_{\alpha}$ defined in (\ref{Gn_Inequality}) and the detailed argument is shown in Appendix \ref{cover_0_superappen}.

The remainder of this paper is organized as follows.  In Section \ref{preliminaries}, we present preliminary results on the existence and regularity of the stationary Hamilton-Jacobi and Fokker-Planck equations. In Section \ref{MFG_sys}, we consider the $L^{1+\alpha}$ constrained minimization problem as an auxiliary problem and discuss the existence of its solutions both with and without potentials. Section \ref{mountain_pass_solution_MFG} is devoted to the existence of a mountain-pass solution, as stated in Theorem \ref{Existience_of_MPS_MFG} for the MFG system \eqref{Existience_of_MPS_MFG}.  In particular, the optimal Gagliardo-Nirenberg type inequality below the Sobolev-critical exponent is established, which is shown in Theorem \ref{GNinequalitythm}. 
\medskip

\section{Some preliminary existence and regularity results}\label{preliminaries}
Noting that \eqref{MFG-SS} consists of a coupled system of Fokker–Planck and Hamilton–Jacobi equations, we begin by collecting several preliminary results concerning the existence and regularity of solutions to these two equations, respectively.  In subsection \ref{subsection1}, we are concerned with Hamilton-Jacobi equations.  Subsection \ref{subsection2} is devoted to the existence and regularity of Fokker-Planck equations.
\subsection{Hamilton-Jacobi equations}\label{subsection1}
 This subsection is dedicated to the investigation of the Hamilton-Jacobi equation associated with a Hamiltonian $H$ satisfying assumptions \textup{(H1)}--\textup{(H2)}.  First of all, we {establish} the local maximal regularity property for the Hamilton-Jacobi equation as follows.
\begin{lemma}\label{thmmaximalregularity}
 Let $p>\frac{n}{\gamma'}$,  $ \gamma\geq \frac{n}{n-1}$, $f\in L^p(\Omega)$ and assume $u\in W^{2,p}(\Omega)$ solves
    \begin{align}\label{ueqlemma21lpmaximal}
	-\Delta u+ H(\nabla u)= f\text{~in~}\Omega \subset \mathbb{R}^n,
    \end{align}
	in the strong sense.  Then for each $K>0$ and $\Omega'\subset\subset \Omega$, we have
	{$$ \Vert |\nabla u|^\gamma\Vert_{L^p(\Omega')} + \Vert D^2 u\Vert_{L^p(\Omega')}\leq C,$$}
	where $\Vert f\Vert_{L^p(\Omega)}\leq K $ and the constant $C=C(K,\mathrm{dist}(\Omega',\partial\Omega),n,p, C_H,\gamma)>0$.
\end{lemma}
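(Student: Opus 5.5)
This is a local maximal regularity estimate for the viscous Hamilton--Jacobi equation in the spirit of Cirant--Goffi. I would derive it by an integral Bernstein argument applied to $w:=|\nabla u|^2$ (or a power of it), localized with a cutoff. Since only $\|f\|_{L^p}$ is controlled, the statement is genuinely a priori; the only role of $u\in W^{2,p}$ is to justify the integrations by parts after a standard mollification.

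\textbf{Step 1 (Bernstein equation).} Differentiate \eqref{ueqlemma21lpmaximal} in $x_k$, multiply by $\partial_k u$, and sum over $k$. With $w=|\nabla u|^2$ this gives
\begin{align*}
-\tfrac12\Delta w+|D^2u|^2+\nabla H(\nabla u)\cdot\tfrac12\nabla w=\nabla f\cdot\nabla u .
\end{align*}
We have $|D^2u|^2\ge \frac1n(\Delta u)^2$, and from the equation $\Delta u=H(\nabla u)-f$. Together with \eqref{MFG-H} this yields
\begin{align*}
|D^2u|^2\ge \tfrac{1}{n}\bigl(C_H|\nabla u|^{\gamma}-|f|\bigr)_+^2\ge \tfrac{C_H^2}{2n}w^{\gamma}-\tfrac1n f^2 .
\end{align*}
This superlinear absorption term is the heart of the estimate.

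\textbf{Step 2 (localized integral estimate).} Take a cutoff $\eta\in C_c^\infty(\Omega)$ with $\eta\equiv1$ on $\Omega'$. Multiply the Bernstein equation by $\eta^{r}w^{q}$, with $q$ chosen so that the $w^\gamma$ absorption integrates to $\int\eta^r|\nabla u|^{\gamma p}$, roughly $q=\frac{\gamma(p-1)}{2}$ adjusted appropriately. Then integrate by parts.

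\begin{itemize}
\item The Laplacian term produces a good term $\int\eta^r w^{q-1}|\nabla w|^2$ plus cutoff errors $\int \eta^{r-1}|\nabla\eta|\,w^{q}|\nabla w|$.
\item The drift term is handled by writing $\nabla H\cdot\nabla w\, w^q=\nabla H\cdot\nabla(w^{q+1})/(q+1)$, integrating by parts, and using $|\nabla H(\nabla u)|\lesssim|\nabla u|^{\gamma-1}$ and $|D^2H|\lesssim|\nabla u|^{\gamma-2}$ (from homogeneity and $C^2$ away from $0$).
\item The right-hand side $\nabla f\cdot\nabla u\, w^q\eta^r$ is moved onto $f$ by integrating by parts, which gives terms $\int |f|\,(|D^2u| w^{q}+ w^{q}|\nabla\eta|\eta^{r-1} |\nabla u|)$.
\end{itemize}

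All these terms are bounded by Young and Hölder, absorbing $|D^2u|$ and $w^\gamma$ parts into the left side. This leads to an inequality of the form
\begin{align*}
\int \eta^r |\nabla u|^{\gamma p}+\int\eta^r|D^2u|^2w^{q}\le C\int |f|^p+C\int \eta^{r-s}|\nabla\eta|^{s}|\nabla u|^{\gamma p-\theta}+C
\end{align*}
with some $\theta>0$. The last term is again absorbed by Young, because its power of $|\nabla u|$ is strictly below $\gamma p$. This gives $\||\nabla u|^\gamma\|_{L^p(\Omega')}\le C(K,\mathrm{dist}(\Omega',\partial\Omega),\dots)$.

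\textbf{Step 3 (second derivatives).} With $|\nabla u|^\gamma\in L^p(\Omega'')$ for an intermediate domain, the equation reads $-\Delta u=f-H(\nabla u)\in L^p(\Omega'')$. Local Calderón--Zygmund estimates then bound $\|D^2u\|_{L^p(\Omega')}$ by $\|\Delta u\|_{L^p(\Omega'')}+\|u-\bar u\|_{L^p(\Omega'')}$. The lower-order term is handled by Poincaré together with the gradient bound, since $\gamma p\ge p$ and $\Omega''$ is bounded.

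\textbf{Main obstacle.} The hard step is Step 2: choosing exponents so that every error term is strictly subordinate to the absorption term $\int\eta^r w^{\gamma p/2}$, and handling the $f$-term when $p$ is small. This is exactly where $p>\frac{n}{\gamma'}$ and $\gamma\ge\frac{n}{n-1}$ enter. I would first try the Cirant--Goffi device: test with $w^{q}$ where $q$ is tied to $p$, and control the $f\,|D^2u|w^q$ term using the $\frac1n(\Delta u)^2$ lower bound together with Sobolev embedding of $w^{(q+1)/2}$. If the direct approach fails for $p$ close to $n/\gamma'$, I would fall back on a blow-up/contradiction argument. In that argument one rescales a sequence with $\||\nabla u_k|^\gamma\|_{L^p}\to\infty$; because $p>n/\gamma'$ the rescaled data $f_k\to0$ in $L^p$. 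The limit is then an entire solution of $-\Delta v+H(\nabla v)=0$ with nontrivial gradient, which contradicts the Liouville theorem for viscous HJ equations with superlinear $H$.
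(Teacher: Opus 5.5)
\textbf{There is a genuine gap in Step 2, and it is the central difficulty of the lemma.} The drift term is not lower order; it is exactly critical.

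After testing the Bernstein equation with $\eta^r w^q$, the best you can do with $\tfrac12\int\eta^r w^q\,\nabla H(\nabla u)\cdot\nabla w$ is one of two things. You can absorb $\nabla w$ into the good term $\tfrac q2\int\eta^r w^{q-1}|\nabla w|^2$. This leaves $\tfrac{C}{q}\int\eta^r w^{q+1}|\nabla u|^{2\gamma-2}=\tfrac{C}{q}\int\eta^r w^{q+\gamma}$. Or you can integrate by parts as you propose and absorb $|D^2u|$ into $\int\eta^r w^q|D^2u|^2$. This leaves $\tfrac{C}{(q+1)^2}\int\eta^r w^{q+\gamma}$. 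In both cases, with $q=\gamma(p-2)/2$ tied to $p$, the leftover power is $w^{q+\gamma}=|\nabla u|^{\gamma p}$. That is the same power as your absorption term $\tfrac{C_H^2}{2n}\int\eta^r w^{q+\gamma}$.

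So your claim that everything left over has power strictly below $\gamma p$ is false, and no $\varepsilon$-Young argument closes the estimate. What you would need is a comparison of constants. That comparison holds only if $q$ is large in terms of $n$, $\gamma$ and the ratio of $\sup_{|\boldsymbol p|=1}|\nabla H(\boldsymbol p)|$ to $C_H$. Since $q$ is tied to $p$, this means $p$ large, not $p$ down to $n/\gamma'$.

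Independently, the hypotheses allow $p\le 2$; for example $n=3$, $\gamma=3/2$ gives $n/\gamma'=1$. Then $q\le 0$, and the error $\int\eta^r f^2 w^q$ produced by the $f$-term is not controlled by $\|f\|_{L^p}$ at all.

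Note also that the sharp threshold $p>n/\gamma'$ never enters your Bernstein computation. It is a scaling condition: $\|f\|_{L^p}$ vanishes under the invariant rescaling $\rho^{\gamma'-2}u(x_0+\rho x)$. It is also a compactness condition: $W^{2,p}\subset\subset W^{1,\gamma p}$ locally. A direct Bernstein estimate at the $L^{\gamma p}$ level therefore cannot be expected to reach it.

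\textbf{Your fallback is the right strategy and is essentially the paper's, but as sketched it lacks the ingredient that makes it work.}

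First, normalizing a blow-up sequence at a single scale gives no uniform control of the rescaled functions on larger balls. So you can neither extract a limit nor guarantee that the limit has nontrivial gradient; the latter needs strong convergence of $|\nabla v_k|^\gamma$ in $L^p_{\rm loc}$.

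Second, the Liouville theorem the paper uses is conditional. It concerns strong $W^{2,p}_{\rm loc}$ solutions of $-\Delta v+C_0H(\nabla v)=0$ with $C_0\ge 0$. It requires $\nabla v$ to lie in a Lorentz space on $\mathbb R^n$, and the blow-up limit must inherit this.

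The paper obtains both from a scale-invariant a priori estimate. The Bernstein method is not used to bound $\||\nabla u|^\gamma\|_{L^p}$ directly. It is used only to derive a weighted Morrey-type bound on $\int_{B_R(x_0)}|\nabla u|^\gamma\,dx$ over all balls with $B_{2R}(x_0)\subset\Omega$. This bound depends only on $\|f\|_{L^p}$ and the distance to $\partial\Omega$. The weaker estimate survives the rescaling, and it is then combined with the Liouville theorem in the blow-up step taken from Lemma 2.8 of \cite{cirant2025critical}.

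To repair your argument, demote Step 2 to the derivation of such a Morrey bound, and run the contradiction argument with it in hand. Your Step 3 is fine once $|\nabla u|^\gamma\in L^p_{\rm loc}$ is known.
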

\begin{proof}
{Since the argument is similar to that in the proofs of \cite[Theorem~1.1]{cirant2025critical} and \cite[Theorem~1.3]{cirant2022local}, we only provide a brief outline for completeness. First, under assumption (H1) and assumption (H2), one can adapt the arguments in \cite[Lemma~2.9]{cirant2025critical} (see also \cite[Lemma~2.5]{cirant2022local}), where the Hamiltonian is taken as $H(\boldsymbol{p}) = |\boldsymbol{p}|^{\gamma}$ for $\boldsymbol{p} \in \mathbb{R}^n$, to obtain the  following Liouville-type result: 
 
   Let $u \in W_{\mathrm{loc}}^{2,p}(\mathbb{R}^n)$ with $p > \frac{n}{\gamma'}$ be a solution to 
\[
-\Delta u + C_0 H(\nabla u) = 0 \quad \text{in } \mathbb{R}^n,
\]
where $C_0 \geq 0$ is a constant. If, in addition, $\nabla u \in L^{r',q}(\mathbb{R}^n)$ for some $q \in (0,n]$, then $u$ is constant, i.e., $u \equiv C$ for some $C \in \mathbb{R}$.

  In light of Liouville result stated above, Bernstein method yields weighted Morrey-type estimates for solutions to \eqref{ueqlemma21lpmaximal}. Specifically, if $u \in W^{2,p}(\Omega)$ with $p > \frac{n}{\gamma'}$ and $\gamma' \geq \frac{n}{n-1}$ is a strong solution and $\|f\|_{L^{p}(\Omega)} \leq M$, then there exists $C = C(M,n,p,\gamma,\Omega) > 0$ such that
\[
\sup_{B_{2R}(x_{0}) \subset \Omega} 
R^{q} \int_{B_{R}(x_{0})} |\nabla u|^{\gamma}\, dx \,
\bigl(\mathrm{dist}(B_{R}(x_{0}), \partial\Omega)\bigr)^{\gamma' - q}
\leq C.
\]
   
  Finally, the proof of Lemma~\ref{thmmaximalregularity} follows by combining the preceding estimates with \cite[Lemma~2.8]{cirant2025critical}.   } 
\end{proof}   


Now, we consider the following form of Hamilton-Jacobi equations:
\begin{align}\label{HJB-regularity}
	-\Delta u_k+H(\nabla u_k)+\lambda_{k}=f_k(x)+V_k(x),\ \ x\in\mathbb R^n,
\end{align}
{where $H$ satisfies (H1)-(H2)} and $(u_k,\lambda_k)$ denotes a sequence of solutions to \eqref{HJB-regularity}.
Focusing on the regularities and the lower bounds of $u_k$, we state them in the following lemmas.  
\begin{lemma}\label{sect2-lemma21-gradientu}
	Assume that $f_k\in L^{\infty}(\mathbb R^n)$ satisfies  $\Vert f_k\Vert_{L^\infty(\mathbb{R}^n)}\leq C_f$,  $\vert\lambda_{k}\vert\leq \lambda$ and the potential functions $V_k(x)\in C^{0,\theta}_{\rm loc}(\mathbb R^n)$ with $\theta\in(0,1)$ satisfy $0\leq V_k(x)\rightarrow +\infty$ as $|x|\rightarrow +\infty,$ and $\exists~ R>0$ sufficiently large such that 
	\begin{align*}
		0< C_1\leq \frac{V_k(x+y)}{V_k(x)}\leq C_2,\text{~for~all~}k\text{~and~all~}|x|\geq R \text{~with~}|y|<2,
	\end{align*}
	where the positive constants $C_f$, $\lambda$, $R$, $C_1$ and $C_2$ are independent of $k$.  Let $(u_k,\lambda_k)\in C^2(\mathbb R^n)\times \mathbb R$ be a sequence of solutions to \eqref{HJB-regularity}. Then, for all $k$,
	\begin{align*}
		|\nabla u_k(x)|\leq C(1+V_k(x))^{\frac{1}{\gamma}}, \text{ for all } x\in\mathbb{R}^n,
	\end{align*}
	where the constant $C$ depends on $C_H$, $C_1$, $C_2$, $\lambda$, $\gamma$, $n$ and $C_f.$
	
	In particular, if there exist   $b\geq 0$ and $C_{F}>0$  independent of $k,$ such that following conditions hold on $V_k$
	\begin{align}\label{cirant-Vk}
		C_F^{-1}(\max\{|x|-C_F,0\})^b\leq V_k(x)\leq C_F(1+|x|)^b,~~\text{for all }k\text{ and }x\in\mathbb R^n,
	\end{align}
	then  we have 
	\begin{align*}
		|\nabla u_k|\leq C(1+|x|)^{\frac{b}{\gamma}}, ~\text{for all }k\,\,\text{and}\,\, x\in\mathbb R^n,
	\end{align*}
	where the  constant $C$ depends on $C_H$, $C_{F}$, $b$, $\lambda$, $\gamma$, $n$ and $C_f.$
    \end{lemma}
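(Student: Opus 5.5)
Under the assumptions of Lemma~\ref{sect2-lemma21-gradientu}, I plan to obtain the bound by a local Bernstein-type argument on unit balls, combined with the local maximal regularity of Lemma~\ref{thmmaximalregularity} through a rescaling. The point is that $\lambda_k$ and $f_k$ are uniformly bounded, while the slowly varying condition makes $V_k$ essentially constant on unit balls. Near a point of large potential, $V_k$ therefore acts like a large constant, and the natural gradient scale is $V_k^{1/\gamma}$.

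The plan is to fix $x_0$ with $|x_0|\ge R$ and set $\Lambda:=1+V_k(x_0)$. On $B_2(x_0)$ the slowly varying hypothesis gives $C_1V_k(x_0)\le V_k\le C_2V_k(x_0)$, so the right-hand side $g_k:=f_k+V_k-\lambda_k$ satisfies $|g_k|\le C\Lambda$ there, with $C$ independent of $k$ and $x_0$. I would then rescale, defining $v(y):=\Lambda^{-\frac{1}{\gamma}}\, \varepsilon^{-1}\,\big(u_k(x_0+\varepsilon y)-u_k(x_0)\big)$ with $\varepsilon:=\Lambda^{-\frac{1}{\gamma'}}$. Using the $\gamma$-homogeneity of $H$, I need to check the scaling: $\nabla v=\Lambda^{-1/\gamma}\nabla u_k$ and $\Delta v=\varepsilon\Lambda^{-1/\gamma}\Delta u_k$. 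The choice $\varepsilon=\Lambda^{-(\gamma-1)/\gamma}=\Lambda^{-1/\gamma'}$ makes both coefficients equal to $\Lambda^{-1}$, since $\varepsilon\Lambda^{-1/\gamma}=\Lambda^{-1}$ and $H(\nabla v)=\Lambda^{-1}H(\nabla u_k)$. Hence $-\Delta v+H(\nabla v)=\Lambda^{-1}g_k(x_0+\varepsilon y)=:\tilde g$, with $\|\tilde g\|_{L^\infty(B_2)}\le C$, because $\varepsilon\le1$ keeps the rescaled ball inside $B_2(x_0)$.

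For this uniformly bounded equation I want a uniform bound $|\nabla v(0)|\le C$. When $\gamma\ge n/(n-1)$, Lemma~\ref{thmmaximalregularity} with a large $p>n$ gives $\|D^2v\|_{L^p(B_1)}\le C$ and $\||\nabla v|^\gamma\|_{L^p(B_1)}\le C$, and Morrey's embedding then bounds $\nabla v$ in $L^\infty(B_{1/2})$. For general $\gamma>1$ I would instead use the classical Bernstein estimate. One differentiates the equation, applies the maximum principle to $\eta^2|\nabla v|^2$ with a cutoff $\eta$, and uses the convexity and coercivity $H\ge C_H|p|^\gamma$ from \eqref{MFG-H}. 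This gives a universal interior bound depending only on $\|\tilde g\|_{L^\infty}$ and $\|\nabla\tilde g\|$. Since $\tilde g$ is only bounded and not Lipschitz, I would handle the term $\nabla\tilde g$ by an integration by parts in the Bernstein inequality, or by mollifying $g_k$ and passing to the limit. This is the step I expect to be the main technical obstacle. Undoing the scaling gives $|\nabla u_k(x_0)|\le C\Lambda^{1/\gamma}=C(1+V_k(x_0))^{1/\gamma}$.

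For $|x_0|<R$ the same argument runs with $\Lambda:=1+\sup_{B_{R+2}}V_k$ in place of $1+V_k(x_0)$. A priori this supremum is not uniform in $k$, but iterating the ratio bound from $|x|=R$ inward is not available, since the hypothesis only holds for $|x|\ge R$. So in this region I would assume, as in the intended application, that local bounds of $V_k$ are uniform, or else just note that the constant depends on $\sup_{B_{R+2}}V_k$. Finally, the particular case is immediate: substituting \eqref{cirant-Vk} into the general bound gives $|\nabla u_k|\le C(1+C_F(1+|x|)^b)^{1/\gamma}\le C(1+|x|)^{b/\gamma}$. The slowly varying condition follows from \eqref{cirant-Vk} for $|x|$ large, since $(|x|\pm2)^b/|x|^b$ is bounded above and below.
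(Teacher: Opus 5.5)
Your proposal is correct and follows essentially the paper's route. The rescaling about $x_0$ with $\varepsilon=\Lambda^{-1/\gamma'}$, reducing to $-\Delta v+H(\nabla v)=\tilde g$ with $\|\tilde g\|_{L^\infty(B_2)}\le C$, is the Cesaroni--Cirant argument (Theorem 2.5) that the paper invokes. For the local bound the paper cites a Bernstein estimate (Lasry--Lions, Theorem A.1) that needs only an $L^\infty$ bound on $-\Delta v+H(\nabla v)$, gives $\|\nabla v\|_{L^r(B_1)}\le C$ for every $r<\infty$, and is then upgraded to $L^\infty$ by $W^{2,p}$ estimates. This is your ``integration by parts'' option; mollifying inside the pointwise Bernstein argument would not help, since that bound depends on $\|\nabla\tilde g_\epsilon\|_{L^\infty}$.

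Your caveat for $|x_0|<R$ points to a genuine issue with the statement, not with your argument: the hypotheses do not bound $\sup_{B_{R+2}}V_k$ uniformly in $k$, and without such a bound the claim fails. Take $H(p)=|p|^2$, $V_k=k|x|^2$, $f_k=\lambda_k=0$, and let $U>0$ solve $U''=t^2U$ with decay at $+\infty$. The Hopf--Cole solution $u_k=-\log\prod_{i=1}^n U(-k^{1/4}x_i)$ then has $|\nabla u_k(0)|=\sqrt n\,k^{1/4}|U'(0)|/U(0)\to\infty$ while $V_k(0)=0$. The missing uniform local bound is exactly what \eqref{cirant-Vk} supplies in the particular case, and what $V_k=\delta V+1$ supplies in the paper's applications.
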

\begin{proof}
   {
    To begin with, by applying Bernstein method \cite[Theorem~A.1]{Lasry1989Lions} together with assumptions (H1)-(H2), it is straightforward to verify that if $v \in C^{2}(B_{2}(0))$ satisfies
    \begin{align*}
        \lvert -\nabla v+H(\nabla v)\rvert \leq K\,\text{on}\,B_{2}(0)\,\text{with some constant}\, K>0,
    \end{align*}
   then for any $r\in [1,\infty)$ it holds
   \begin{align*}
        \lVert \nabla v\rVert_{L^{r}(B_{1}(0))} \leq \tilde {K},
    \end{align*}   
    where $\tilde{K}$ depends on $K$, $\gamma$, $n$ and $C'_{H}$ given in \eqref{MFG-L}.  Using classical elliptic regularity estimates, we then obtain
\[
\|\nabla v\|_{L^{\infty}(B_{1}(0))} \leq \tilde{K}.
\]
The lemma is then concluded by an argument similar to that shown in \cite[Theorem~2.5]{cesaroni2018concentration} and \cite[Lemma~3.1]{cirant2025critical}. }
\end{proof}
 
Besides the gradient estimates of $u_k$, we also have the following results for the lower bounds of $u_k$: 
\begin{lemma}\label{lowerboundVkgenerallemma22}
	Suppose all conditions in Lemma \ref{sect2-lemma21-gradientu} hold. 
	Let $u_k$ be a family of $C^2$ solutions and assume that $u_k(x)$ are bounded from below uniformly.  Then there exist positive constants $C_3$ and $C_4$ independent of $k$ such that 
	\begin{align}\label{29uklemma22}
		u_k(x)\geq C_3V^{\frac{1}{\gamma}}_k(x)-C_4,\text{~}\forall x\in\mathbb R^n,~\text{for all }k.
	\end{align}
	In particular, if the following conditions hold on $V_k$
	\begin{align}\label{cirant-Vk-1}
		C_F^{-1}(\max\{|x|-C_F,0\})^b\leq V_k(x)\leq C_F(1+|x|)^b,~~\text{for all }k\text{ and }x\in\mathbb R^n,
	\end{align}
	where constants $b> 0$ and $C_{F}$ are independent of $k,$ then we have 
	\begin{align}\label{usolutionlowerestimatepre-11}
		u_k(x)\geq C_3|x|^{1+\frac{b}{\gamma}}-C_4,\text{~for all }k, x\in\mathbb R^n.
	\end{align}
	If $b=0$ in (\ref{cirant-Vk-1}) and there exist $R>0$ and $\sigma>0 $ independent of $k$ such that 
	\begin{align*}
 V_k-\lambda_{k}+ f_k>\sigma>0,\text{~for~all }|x|>R,
	\end{align*}
	then \eqref{usolutionlowerestimatepre-11} also holds.
\end{lemma}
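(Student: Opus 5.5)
We prove Lemma \ref{lowerboundVkgenerallemma22} by a barrier argument: the coercivity of the right-hand side is turned into superlinear growth of $u_k$ through a local comparison with an explicit subsolution on unit balls, and the pieces are then chained along rays. The gradient bound of Lemma \ref{sect2-lemma21-gradientu} supplies the local uniformity.

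First we make the equation locally coercive. Since $V_k\to+\infty$ uniformly in $k$ and $\|f_k\|_\infty\le C_f$, $|\lambda_k|\le\lambda$, there is $R_1$ independent of $k$ with $V_k-\lambda_k+f_k\ge \tfrac12 V_k$ for $|x|\ge R_1$. By the ratio condition $C_1\le V_k(x+y)/V_k(x)\le C_2$ for $|y|<2$, on each ball $B_1(x_0)$ with $|x_0|\ge R_1+K$ we get
\[
-\Delta u_k+H(\nabla u_k)\ge c\,V_k(x_0)=:c\,\Lambda ,\qquad c=\tfrac{C_1}{2}.
\]

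The key step is a local increment estimate on $B_1(x_0)$. We compare $u_k$ with the barrier $\phi(x)=u_k(x_0)-A+a\Lambda^{1/\gamma}\bigl(1-\sqrt{1+|x-x_0|^2}\bigr)$, or with a radial profile that blows down at $\partial B_1$. More robustly, we use the ergodic/Bernstein-type comparison: we rescale $v(y)=\Lambda^{-1/\gamma}\bigl(u_k(x_0+\Lambda^{-(\gamma-1)/\gamma}\cdot\Lambda^{0}\,y)\bigr)$. It is simplest, however, to argue directly via the gradient bound $|\nabla u_k|\le C\Lambda^{1/\gamma}$ on $B_2(x_0)$ (Lemma \ref{sect2-lemma21-gradientu}). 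Integrating the equation against a cutoff $\psi$ supported in $B_1(x_0)$ gives
\[
\int H(\nabla u_k)\psi\ge c\Lambda\int\psi-\int |\nabla u_k||\nabla\psi| ,
\]
so the average of $|\nabla u_k|^\gamma$ on the ball is at least $c'\Lambda$ once $\Lambda$ is large (the last term is $O(\Lambda^{1/\gamma})=o(\Lambda)$). This shows the gradient is large on average, but it does not yet fix its direction, and this is the main obstacle.

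We resolve the direction by a maximum-principle argument instead. Let $w=u_k-u_k(x_0)$ and consider the supersolution inequality on $B_1(x_0)$ with the radial test function $\phi(x)=-\beta\Lambda^{1/\gamma}|x-x_0|^2/2$ plus a constant. Then $-\Delta\phi+H(\nabla\phi)\le n\beta\Lambda^{1/\gamma}+C'_H\beta^\gamma\Lambda|x-x_0|^\gamma<c\Lambda$ for small $\beta$ and large $\Lambda$. Comparison on $B_1(x_0)$ yields $\min_{\partial B_1(x_0)}u_k\ge u_k(x_0)+\beta\Lambda^{1/\gamma}/2 -\text{(boundary correction)}$. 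Reversing the roles, that is, taking $\phi$ centered so that $\phi$ equals $\min_{\partial B_1}u_k$ on the boundary, we obtain $u_k(x_0)\ge \min_{\partial B_1(x_0)}u_k+\beta'\Lambda^{1/\gamma}$. Hence from any $x_0$ there is a point $x_1$ with $|x_1-x_0|=1$ and $u_k(x_1)\le u_k(x_0)-\beta' V_k(x_0)^{1/\gamma}$.

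We iterate this descent. Starting from $x$ and using \eqref{V2mainassumption_2}, the values $V_k$ at successive points stay comparable to $V_k(x)$ as long as the chain remains at distance at most about $\epsilon V_k(x)^{...}$; more simply, we stop when $|x_j|\le R_1+K$. Each step decreases $u_k$ by at least $\beta' V_k(x_j)^{1/\gamma}\ge \beta'\inf V_k^{1/\gamma}>0$ outside the compact set, and $u_k$ is uniformly bounded below. Therefore the chain must enter the compact set $\{|x|\le R_1+K\}$ after finitely many steps, with total decrease at least $\beta' V_k(x)^{1/\gamma}$ from the first step alone. On the compact set $u_k\ge -C$, and by the gradient bound $u_k$ is also bounded above there up to the normalization implicit in "bounded below", so $u_k(x)\ge \beta'V_k(x)^{1/\gamma}-C_4$, which is \eqref{29uklemma22}.

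For the polynomial case \eqref{cirant-Vk-1}, the same chain from $x$ passes through roughly $|x|$ steps with $V_k(x_j)\gtrsim |x_j|^b$. Summing the decrements $\sum_j |x_j|^{b/\gamma}\gtrsim |x|^{1+b/\gamma}$ requires that the chain move inward at a linear rate, so we instead chain along the ray toward the origin, using the comparison on balls centered on the ray; this gives \eqref{usolutionlowerestimatepre-11}. When $b=0$, the hypothesis $V_k-\lambda_k+f_k>\sigma$ replaces $c\Lambda$ by $\sigma$ and each unit step yields a decrement $\ge\beta'\sigma^{1/\gamma}$, which gives linear growth $C_3|x|-C_4$.
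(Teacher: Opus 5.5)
Your argument for \eqref{29uklemma22} works, but the local step should be proved directly. Drop the intermediate claims: the first comparison you state has the inequality in the wrong direction, and the rescaling is garbled. For $|x_0|\ge R$ with $\Lambda=V_k(x_0)$ large, set $\phi(x)=\min_{\partial B_1(x_0)}u_k+\beta\Lambda^{1/\gamma}(1-|x-x_0|^2)$. Then $-\Delta\phi+H(\nabla\phi)\le 2n\beta\Lambda^{1/\gamma}+C'_H(2\beta)^\gamma\Lambda<c\Lambda\le-\Delta u_k+H(\nabla u_k)$ in $B_1(x_0)$ for $\beta$ small. Since $\phi\le u_k$ on $\partial B_1(x_0)$, touching at an interior maximum of $\phi-u_k$ gives $u_k(x_0)\ge\min_{\partial B_1(x_0)}u_k+\beta\Lambda^{1/\gamma}$. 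With the uniform bound $u_k\ge -B$, this yields \eqref{29uklemma22} in one step, after adjusting $C_4$ where $V_k$ is bounded. No chain and no upper bound on a compact set are needed.

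The genuine gap is in \eqref{usolutionlowerestimatepre-11} and its $b=0$ variant. Your remedy, chaining along the ray toward the origin with balls centred on the ray, does not follow from the comparison. The comparison only bounds $u_k(x_0)$ below by the \emph{minimum} of $u_k$ over $\partial B_1(x_0)$. A supersolution on $B_1(x_0)$ may well increase inward: $A\,e\cdot(y-x_0)$ with $e=-x_0/|x_0|$ and $A^\gamma H(e)\ge c\Lambda$ is one. So no estimate in a prescribed direction is available.

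The obstacle you wanted to avoid is not real; keep the descent chain. Take $x_{j+1}\in\partial B_1(x_j)$ with $u_k(x_{j+1})\le u_k(x_j)-\beta\Lambda_j^{1/\gamma}$, where $\Lambda_j=c_0|x_j|^b$ bounds $V_k-\lambda_k+f_k$ from below on $B_1(x_j)$ by \eqref{cirant-Vk-1} once $|x_j|\ge R'$. Each decrement is at least a fixed $\delta>0$ while $|x_j|\ge R'$, and $u_k\ge -B$, so the chain enters $B_{R'}$ after finitely many steps. Since $|x_{j+1}|\ge |x_j|-1$, for every integer $r\in[R',|x|-1]$ the first index with $|x_j|<r+1$ lands in the shell $\{r\le |y|<r+1\}$, and these indices are distinct. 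Hence the total decrement is at least $c\sum_{R'\le r\le |x|-1}r^{b/\gamma}\ge c'|x|^{1+b/\gamma}-C$, which gives \eqref{usolutionlowerestimatepre-11}.

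For $b=0$, use a barrier of fixed amplitude $A$ with $2nA+C'_H(2A)^\gamma<\sigma$; the same shell count gives $u_k\ge A|x|-C$. With this repair your barrier-and-chain argument is a self-contained proof. The paper gives no argument of its own and only refers to Lemma 3.2 of \cite{cirant2025critical}.
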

\begin{proof}
 The proof follows along the same lines as that of Lemma~3.2 in \cite{cirant2025critical}.  
\end{proof}

Concerning the existence of classical solutions to (\ref{HJB-regularity}), we have the following results:
\begin{lemma}[ C.f. \cite{cesaroni2018concentration,cirant2025critical} ]  \label{lemma22preliminary}
	Suppose $V_{k}$ and $f_{k}$ are locally H\"{o}lder continuous and $V_{k}+f_{k}$ is bounded below uniformly in $k$.  Define 
	\begin{align}\label{def_lambda}
		\bar \lambda_k:=\sup\left\{\lambda_k\in\mathbb R~|~(\ref{HJB-regularity})\text{ has a solution }u_k\in C^2(\mathbb R^n)\right\}.
	\end{align}
	Then 
	\begin{itemize}
		\item[(i).]  For each $k$, $\bar \lambda_k$ is finite and \eqref{HJB-regularity} admits a solution $(u_k,\lambda_k)\in C^2(\mathbb R^n)\times \mathbb R$ with $\lambda_k=\bar \lambda_k$, where $ u_k(x)$ is bounded from below.  Moreover,
		$$\bar \lambda_k=\sup\left\{\lambda_k\in\mathbb R~|~(\ref{HJB-regularity})\text{ has a subsolution }u_k\in C^2(\mathbb R^n)\right\}.$$
		\item[(ii).] If $V_k$ satisfies \eqref{cirant-Vk} with $b>0$, then $u_k$ is unique up to constants for fixed $k$ and there exists a positive constant $C$ independent of $k$ such that 
		\begin{align}\label{lowerboundusect2}
			u_k(x)\geq C|x|^{\frac{b}{\gamma}+1}-C^{-1},\, \forall x\in\mathbb R^n.
		\end{align}
		In particular, if $V_k\equiv 0$, $b=0$ in \eqref{cirant-Vk-1} and there exists $\xi>0$ independent of $k$ such that 
		\begin{align}\label{verifylemma22}
			f_k-\lambda_{k} \geq \xi>0,\ \ \text{for}~|x|>K_2,
		\end{align}
		where $K_2>0$ is a large constant independent of $k$, then \eqref{lowerboundusect2} also holds.
	\end{itemize}
	(iii). If $V_k$ satisfies (V1)-(V2) and positive constants $C_1$, $C_2$ and $K$ independent of $k,$ then there exist uniformly bounded from below classical solutions $u_k$ to problem \eqref{HJB-regularity} satisfying estimate \eqref{29uklemma22}.
\end{lemma}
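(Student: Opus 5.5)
We prove Lemma~\ref{lemma22preliminary} by the classical ergodic approximation of Lasry--Lions and Ichihara, made quantitative through the gradient and lower bounds of Lemmas~\ref{sect2-lemma21-gradientu} and \ref{lowerboundVkgenerallemma22}. Fix $k$ and write $g:=f_k+V_k$, which is locally H\"older and bounded below by some $-C_0$.

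\textbf{Step 1: $\bar\lambda_k$ is finite.} The constant function $u\equiv 0$ is a subsolution for $\lambda=-C_0$, so the set in \eqref{def_lambda} (with subsolutions) is nonempty and $\bar\lambda_k\ge -C_0$. For the upper bound, take a $C^2$ subsolution $u$ with constant $\lambda$. Multiply by a cutoff $\phi_R^{\gamma'}$ supported in $B_{2R}$ and integrate by parts. Using $H\ge C_H|\nabla u|^\gamma$ from \eqref{MFG-H} and Young's inequality,
\[
\lambda\int\phi_R^{\gamma'}\le \int g\,\phi_R^{\gamma'} + C\int |\nabla\phi_R|^{\gamma'}.
\]
Now choose $R$ fixed near a point where $g$ is bounded. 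This bounds $\lambda\le C(\|g\|_{L^\infty(B_{2R})},R)$. Hence $\bar\lambda_k<\infty$, and the two definitions of the supremum agree once Step 2 produces a genuine solution at the subsolution supremum.

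\textbf{Step 2: existence at $\lambda=\bar\lambda_k$.} Solve the discounted problem $-\Delta v_\epsilon+H(\nabla v_\epsilon)+\epsilon v_\epsilon=g$ on balls $B_R$ with state-constraint (or large Dirichlet) data, using a sub- and supersolution method, and let $R\to\infty$ via the local gradient bound of Lemma~\ref{sect2-lemma21-gradientu}. Then set $u_\epsilon:=v_\epsilon-v_\epsilon(0)$ and $\lambda_\epsilon:=\epsilon v_\epsilon(0)$, which is bounded by comparison with constants. Lemma~\ref{sect2-lemma21-gradientu} gives $|\nabla u_\epsilon|\le C(1+V_k)^{1/\gamma}$ uniformly in $\epsilon$. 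Together with Schauder estimates, this yields locally uniform $C^{2,\theta}$ bounds, and a subsequence converges to $(u,\lambda)$ solving \eqref{HJB-regularity}. To show the limit satisfies $\lambda=\bar\lambda_k$ with $u$ bounded below, I use the standard argument: for $\lambda<\bar\lambda_k$, a strict subsolution exists, and comparing it with $v_\epsilon$ forces $\liminf\lambda_\epsilon\ge\bar\lambda_k$, while the reverse inequality is immediate.

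\textbf{Main obstacle: the lower bound on $u$.} Lower boundedness of $u$ near infinity comes from coercivity. If $V_k\to\infty$, then $g-\bar\lambda_k>\sigma>0$ outside a ball, so $u$ is a supersolution of $-\Delta u+H(\nabla u)\ge\sigma$ there. Comparison with radial barriers of the form $c|x|^{1+b/\gamma}$ then bounds $u$ from below. This is the delicate point, especially in the case $V_k\equiv0$, $b=0$, where one instead invokes \eqref{verifylemma22}.

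\textbf{Step 3: parts (ii) and (iii).} Once $u_k$ is bounded below, \eqref{lowerboundusect2} and \eqref{29uklemma22} follow directly from Lemma~\ref{lowerboundVkgenerallemma22}. Uniqueness up to constants in (ii) follows from the growth $u\gtrsim|x|^{1+b/\gamma}$, which makes $u$ a Lyapunov function. For two solutions $u_1,u_2$, the difference $w:=u_1-u_2$ satisfies a linear equation
\[
-\Delta w+b(x)\cdot\nabla w=0,\qquad b(x)=\int_0^1\nabla H\bigl(\nabla u_2+s\nabla w\bigr)\,ds.
\]
The drift $b$ has polynomial gradient bounds, and the diffusion admits the invariant measure $e^{-\text{const}\,u}$-type confinement. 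A strong maximum principle argument then shows $w$ is constant: apply the maximum principle to $w-\eta u_1$ and let $\eta\to0$.

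Part (iii) is Step 2 with the (V1)--(V2) gradient bound of Lemma~\ref{sect2-lemma21-gradientu}, with all constants uniform in $k$.
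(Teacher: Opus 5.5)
Your Step~1 is correct: the cutoff estimate $\lambda\int\phi_R^{\gamma'}\le\int g\,\phi_R^{\gamma'}+C\int|\nabla\phi_R|^{\gamma'}$ does bound $\bar\lambda_k$. It also goes beyond the paper, which cites Barles--Meireles for (i) and Ichihara for (ii) and only sketches (iii).

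\textbf{Main gap: the lower bound on $u$.} Your ``main obstacle'' argument does not work. Knowing that $u$ is a supersolution of $-\Delta u+H(\nabla u)\ge\sigma$ outside a ball cannot, via comparison with radial barriers, give boundedness from below. That exterior inequality holds for \emph{every} solution with $\lambda\le\bar\lambda_k$, and such solutions can be unbounded below.

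For example, take $n=1$, $H(p)=p^2$, $V_k=x^2$, $f_k=0$. The substitution $u=-\log\phi$ turns \eqref{HJB-regularity} into $-\phi''+x^2\phi=\lambda\phi$, so $\bar\lambda_k=1$. For $\lambda<1$ there are positive solutions. They cannot lie in $L^2$, so they grow like $e^{x^2/2}$ (up to a power) at one end. Hence $u\sim -x^2/2$ there, even though $x^2-\lambda\ge\sigma$ outside a ball.

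The barrier argument of Lemma~\ref{lowerboundVkgenerallemma22} takes lower boundedness as an \emph{input}. Lower boundedness is exactly what singles out $\lambda=\bar\lambda_k$, so it must be extracted from the approximation scheme. The paper does this in (iii):
\begin{itemize}
\item the state-constraint solutions $u_k^R$ on $B_R$ attain an interior minimum at some $x_k^R$;
\item there $\nabla u_k^R=0$, $H(0)=0$ and $\Delta u_k^R\ge0$ give $V_k(x_k^R)+f_k(x_k^R)\le\lambda_k^R$;
\item coercivity then traps $x_k^R$ in a fixed compact set;
\item the normalization $u_k^R(0)=0$ and the local gradient bound give $u_k^R\ge -C$ uniformly in $R$ and $k$, which passes to the limit.
\end{itemize}
Your discounted scheme can be repaired the same way: if $v_\epsilon$ attains its minimum at $x_\epsilon$, then $g(x_\epsilon)\le\epsilon v_\epsilon(x_\epsilon)\le\epsilon v_\epsilon(0)$. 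As written, however, this step is missing. You are right that coercivity must enter here; the paper uses it too.

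\textbf{Identification of $\bar\lambda_k$.} The claim $\lim\epsilon v_\epsilon(0)=\bar\lambda_k$, obtained by ``comparing a strict subsolution $\psi$ with $v_\epsilon$'', has three problems:
\begin{itemize}
\item it needs a comparison principle on $\mathbb{R}^n$ between unbounded sub- and supersolutions with superlinear $H$, which is the delicate point and is simply assumed;
\item $\psi+\lambda/\epsilon$ is a subsolution of the discounted equation only up to the term $\epsilon\psi$, which is harmless only if $\psi$ is bounded above;
\item constants are not supersolutions when $g$ is unbounded above, so $\epsilon v_\epsilon(0)$ is not bounded ``by comparison with constants''.
\end{itemize}
The paper's ball approximation avoids all of this. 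The constants $\lambda_k^R$ are nonincreasing in $R$ and bounded below by $\bar\lambda_k$, since a global subsolution restricts to $B_R$. The limit is a solution, so its constant is at most $\bar\lambda_k$.

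\textbf{Uniqueness in (ii).} Here $u_1$ is a Lyapunov function for the linearization $-\Delta+\nabla H(\nabla u_1)\cdot\nabla$, which maps $u_1$ to $g-\lambda+(\gamma-1)H(\nabla u_1)$. It is not one for your averaged drift $b$: the quantity $-\Delta u_1+b\cdot\nabla u_1=g-\lambda-H(\nabla u_1)+b\cdot\nabla u_1$ has no sign. Use convexity instead. The difference $w=u_1-u_2$ satisfies $-\Delta w+\nabla H(\nabla u_2)\cdot\nabla w\le 0$. Apply the maximum principle to $w-\eta\Phi$, where $\Phi$ is a Lyapunov function of this operator growing faster than $w^+$; under \eqref{cirant-Vk}, $\Phi=u_2^{q}$ with $q>1$ works. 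Taking $\Phi=u_1$ or $u_2$ can fail, because only comparable, not equal, growth of $u_1$ and $u_2$ is known.
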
 
\begin{proof}

We first prove statement $(i)$. Note that $H$ is assumed to satisfy (H1)-(H2), which guarantees that $H(\boldsymbol{p}) \asymp |\boldsymbol{p}|^{\gamma}$.
Therefore, the conclusion of $(i)$ follows from a minor modification of the argument in the proof of Theorem 2.1 in \cite{GuyJoao2016}.
As noted in \cite{cesaroni2018concentration}, \cite{GuyJoao2016} assumes that $f_k + V_k \in W^{1,\infty}_{\mathrm{loc}}(\mathbb{R}^n)$ to obtain gradient estimates for the solutions. Lemma~\ref{sect2-lemma21-gradientu} shows that this requirement can be weakened to local H\"older continuity of $f_k + V_k$, while still ensuring classical elliptic regularity.


 To establish $(ii)$, we invoke the results of \cite{Ichihara2011siam} (see also \cite{GuyJoao2016}). Specifically, since the bounds on the coefficients are independent of $n$, by following the approaches in \cite{Ichihara2011siam}, one establishes that the sequence $(u_k)$ admits a uniform lower bound. Consequently, Lemma~\ref{lowerboundVkgenerallemma22} directly yields the desired gradient estimate.
 

Finally, we establish conclusion $(iii)$ and present a proof sketch following the argument in \cite[Theorem~2.6]{GuyJoao2016}.
 To begin with, we consider the ergodic problem
\begin{align}\label{ergodic_pro}
-\Delta u_{k}^{R}+H(\nabla u_{k}^{R})+\lambda_{k}^{R}=f_{k}+V_{k}, \,\, x\in B_{R}(0),
\end{align}
with state constraint boundary conditions given by
\begin{align*}
\begin{cases}
u_{k}^{R}(x)\to +\infty, &\text{as }x\to \partial B_{R}(0),\,\, \text{if } \gamma\in(1,2]\; \,(\text{subquadratic and quadratic cases}),\\[4pt]
-\Delta u_{k}^{R}+H(\nabla u_{k}^{R})+\lambda_{k}^{R}\geq f_{k}+V_{k}, &\text{on }\partial B_{R}(0),\,\,\text{if } \gamma>2\;\, (\text{superquadratic case}),
\end{cases}
\end{align*}
where $\gamma$ is the degree of   Hamiltonian $H$ shown in (H1). Following the proofs in \cite{Guy2010sub} for the subquadratic and quadratic cases, and \cite{Tchamba2010super} for the superquadratic case, we have that for any given $R>0$, there exist a unique $\lambda_k^R$ and a unique (up to an additive constant) $u_k^R \in C^2(B_R)$ satisfying \eqref{ergodic_pro} (see also \cite{Lasry1989Lions}).

We now claim that $\lim_{R \to \infty} \lambda_k^R = \bar{\lambda}_k.$ Indeed, the sequence $(\lambda_k^R)$ is monotone nonincreasing in $R$ and bounded from below by $\bar{\lambda}_k$, so that it converges to some $\lambda_k^* \ge \bar{\lambda}_k$.  Following the argument of Lemma~\ref{sect2-lemma21-gradientu}, for every compact set $\Omega \subset \mathbb{R}^n$, there exists a constant $C>0$ such that $|\nabla u_k^R| \le C$ on  $\Omega$ 
for all sufficiently large $R$ and for every $k$.  Without loss of generality, we may assume that $u_k^R(0) = 0$. The gradient estimate, together with classical elliptic regularity, implies that $u_k^R$ is uniformly bounded in $C^2(\Omega)$ with respect to $R$. By the Ascoli-Arzel\`a theorem and a standard diagonalization argument, there exists $u_k \in C^2(\mathbb{R}^n)$ such that  $u_k^R \to u_k \,\text{in } C^2_{\mathrm{loc}}(\mathbb{R}^n) \text{ as } R \to +\infty.$
Moreover, $u_k$ satisfies \eqref{HJB-regularity} with $\lambda = \lambda_k^*$. In view of the definition of $\bar{\lambda}_k$ in \eqref{def_lambda}, it follows that $\lambda_k^* = \bar{\lambda}_k$.


Now let $x_{k}^R$ be a point where $u_{k}^R$ attains its minimum in $B_R(0)$. Noting that $u_{k}^{R}$ solves \eqref{ergodic_pro} and $H(0)=0$,  a direct computation at $x_{k}^R$ yields $\lambda_{k}^R-V_{k}(x_{k}^R)- f_{k}(x_{k}^R) = \Delta u_{k}^R(x_{k}^R) \geq 0$. Using the coercivity of $V_{k}$ together with the convergence $\lambda_k^R \to \bar{\lambda}_k$, we can find a compact set $\Omega$ (independent of $k$ and $R$) and some $R_0>0$ such that $x_k^R \in \Omega$ for all $R> R_0$. Since $u_k^R(0)=0$ and the gradient of $u_{k}^R$ is bounded on $\Omega$, we obtain that $u_k^R(x_{k}^R) \geq -C$ for some constant $C>0$ that does not depend on $k$ and $R$. As $x_{k}^R$ is a minimal point of $u_{k}^R$, we have $u_{k}^R(x) \geq u_{k}^R(x_{k}^R) \geq -C$ for any $x \in B_R(0)$. Passing to the limit as $ R\to\infty$ yields $u_{k}(x) \geq -C$ for all $x\in \mathbb{R}^n$ with a constant $C$ independent of $k$.
\end{proof}

With the a priori estimates and existence results for solutions to the Hamilton–Jacobi equation \eqref{HJB-regularity}, we next discuss the regularities of solutions to Fokker-Planck equations, which is presented in Subsection \ref{subsection2}.

\subsection{Fokker-Planck equations}\label{subsection2}
Before stating the gradient estimates satisfied by solutions to the Fokker–Planck equations, we first recall the following key lemma, which holds for any function $m\in L^p(\mathbb R^n)$ 
\begin{lemma}\label{proposition-lemma21-FP}
 Suppose $p>1$ and $m\in L^p(\mathbb R^n)$ such that
 \begin{align*}
 \Big|\int_{\mathbb R^n}m\Delta\varphi\,dx\Big|\leq N\Vert\nabla \varphi\Vert_{L^{p'}(\mathbb R^n)}\ \ \text{~for~all~}\varphi\in C_c^{\infty}(\mathbb R^n),
 \end{align*}
 where $N>0$ is a positive constant.  Then we have $m\in W^{1,p}(\mathbb R^n)$ and $\Vert\nabla m\Vert_{L^p(\mathbb R^n)}\leq C_pN,
 $
 where $C_p$ is a positive constant depending only on $p$.
\end{lemma}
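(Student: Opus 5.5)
Since the hypothesis only gives a bound on $\nabla\varphi$ in $L^{p'}$, the plan is to treat the map $\nabla\varphi\mapsto\int_{\mathbb R^n} m\Delta\varphi\,dx$ as a bounded linear functional on a subspace of $L^{p'}(\mathbb R^n;\mathbb R^n)$ and identify its Riesz representative with $\nabla m$. The argument is a duality statement plus one application of Calder\'on--Zygmund theory.

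\emph{Step 1: a representing field.} Let $X:=\{\nabla\varphi:\varphi\in C_c^\infty(\mathbb R^n)\}\subset L^{p'}(\mathbb R^n;\mathbb R^n)$ and set $\ell(\nabla\varphi):=-\int_{\mathbb R^n} m\Delta\varphi\,dx$. This is well defined, since $\nabla\varphi=\nabla\psi$ with compact support forces $\varphi=\psi$. The hypothesis gives $|\ell(g)|\le N\|g\|_{L^{p'}}$ on $X$. By Hahn--Banach, $\ell$ extends to all of $L^{p'}$ with the same norm. Since $1<p'<\infty$, Riesz representation then yields $G\in L^p(\mathbb R^n;\mathbb R^n)$ with $\|G\|_{L^p}\le N$ and
\[
-\int_{\mathbb R^n} m\Delta\varphi\,dx=\int_{\mathbb R^n} G\cdot\nabla\varphi\,dx \quad \forall \varphi\in C_c^\infty(\mathbb R^n).
\]
Equivalently, $\Delta m=\nabla\cdot G$ in $\mathcal D'(\mathbb R^n)$.

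\emph{Step 2: identify $\nabla m$ (main obstacle).} The goal is to show that $\nabla m=\mathcal R G$, where $\mathcal R G:=\nabla(-\Delta)^{-1}(-\nabla\cdot G)$, i.e. $(\mathcal RG)_j=\sum_i R_jR_iG_i$ with $R_i$ the Riesz transforms, so that $\mathcal R$ is the projection of $G$ onto gradient fields. By Calder\'on--Zygmund theory $\mathcal R$ is bounded on $L^p$ for $1<p<\infty$, which gives $\|\mathcal RG\|_{L^p}\le C_p\|G\|_{L^p}\le C_pN$.

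Moreover $\mathcal RG=\nabla h$ for a distribution $h$ satisfying $\Delta h=\nabla\cdot G$. One can see this on the Fourier side, or by taking $h=\Gamma*(\nabla\cdot G)$ with $\Gamma$ the fundamental solution, so that $\nabla h=\nabla\nabla\Gamma*G$ understood in the principal value sense.

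Then $m-h$ is a harmonic tempered distribution, hence a polynomial $P$. I expect the main obstacle to be showing that $P$ is constant. The plan is to argue that $\nabla P=\nabla m-\nabla h$, where $\nabla h\in L^p$ and, by duality, $\nabla m$ lies in $\dot W^{-1,p}$-type spaces. Cleaner: pair with $\nabla\varphi$ and use density. For any polynomial $P$ with $\Delta P=0$ and $m-h=P$, the fact that $m\in L^p$ (and $h$ is only defined up to constants) forces $\hat P$ to be supported at $0$.

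To settle this concretely, I would instead avoid $h$ and mollify. Set $m_\varepsilon=m*\rho_\varepsilon\in L^p\cap C^\infty$ and $G_\varepsilon=G*\rho_\varepsilon$. Then $\nabla m_\varepsilon\in L^p$, because $\nabla\rho_\varepsilon\in L^1$, and $\Delta m_\varepsilon=\nabla\cdot G_\varepsilon$. The difference $\nabla m_\varepsilon-\mathcal R G_\varepsilon$ is an $L^p$ field whose components are harmonic, since $\nabla m_\varepsilon$ is curl-free with divergence $\nabla\cdot G_\varepsilon$, and the same holds for $\mathcal RG_\varepsilon$. An $L^p$ harmonic function on $\mathbb R^n$ vanishes by the mean value property. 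Hence
\[
\|\nabla m_\varepsilon\|_{L^p}\le C_p\|G_\varepsilon\|_{L^p}\le C_pN.
\]

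\emph{Step 3: pass to the limit.} Letting $\varepsilon\to0$, the bound on $\nabla m_\varepsilon$ gives weak compactness in $L^p$ and $m_\varepsilon\to m$ in $L^p$. Therefore the weak limit is $\nabla m$, so $m\in W^{1,p}(\mathbb R^n)$ and $\|\nabla m\|_{L^p}\le C_pN$ by weak lower semicontinuity.
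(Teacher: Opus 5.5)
Your proof is correct. The paper contains no argument of its own for this lemma; it simply cites Proposition 2.4 of Cesaroni--Cirant. So your write-up is a self-contained alternative, and the natural comparison is with the standard proof.

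That standard proof stays on the test-function side. For $\psi\in C_c^\infty(\mathbb R^n)$ one takes $\varphi=\partial_i\Phi*\psi$, with $\Phi$ the fundamental solution, so that $\Delta\varphi=\partial_i\psi$. Calder\'on--Zygmund gives $\|\nabla\varphi\|_{L^{p'}}\le C_p\|\psi\|_{L^{p'}}$. A cutoff $\chi(x/R)\varphi$, justified by the decay $|\varphi|\lesssim|x|^{1-n}$ and $|\nabla\varphi|\lesssim|x|^{-n}$, lets one insert $\varphi$ into the hypothesis. This yields $|\int_{\mathbb R^n}m\,\partial_i\psi\,dx|\le C_pN\|\psi\|_{L^{p'}}$, hence $\partial_i m\in L^p$ by duality.

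You instead move the singular-integral estimate to the $L^p$ side. Hahn--Banach and Riesz give $G$ with $\Delta m=\nabla\cdot G$ and $\|G\|_{L^p}\le N$. Then the gradient projection $\mathcal R$ and the Liouville theorem for $L^p$ harmonic functions identify $\nabla m_\varepsilon=\mathcal RG_\varepsilon$.

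What this trades: the truncation and decay estimates are replaced by a nonconstructive extension step and a mollification. The mollification is exactly what places $\nabla m_\varepsilon-\mathcal RG_\varepsilon$ in $L^p$, so that Liouville applies. You also get the representation $\nabla m=\mathcal RG$ for free. Your route also matches how the paper uses the lemma. There the hypothesis comes from the Fokker--Planck identity $\int_{\mathbb R^n}\nabla m\cdot\nabla\varphi\,dx=\int_{\mathbb R^n}w\cdot\nabla\varphi\,dx$, so your $G$ is just $w$ and Step 1 is not needed at all.

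A few points to fix in the write-up:
\begin{itemize}
\item Delete the paragraph about the harmonic polynomial $P$. The remark that $\hat P$ is supported at the origin holds for every polynomial and proves nothing. Also, your $h$ need not exist as a convergent convolution for general $G\in L^p$ when $p\ge n$. The mollified argument that follows is complete on its own.
\item State explicitly that $\nabla\cdot\mathcal RG=\nabla\cdot G$ and that $\mathcal RG$ is curl-free in $\mathcal D'(\mathbb R^n)$ for every $G\in L^p$. This holds for Schwartz $G$ and extends by density and the $L^p$-boundedness of $\mathcal R$.
\item With $\mathcal RG=\nabla\Delta^{-1}\nabla\cdot G$ one actually has $(\mathcal RG)_j=-\sum_iR_jR_iG_i$. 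The sign is harmless.
\end{itemize}
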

\begin{proof}
The proof can be found in Proposition 2.4 of \cite{cesaroni2018concentration}.
\end{proof}
Now, we are concerned with the following Fokker-Planck equations:
\begin{align}\label{sect2-FP-eq}
-\Delta m+\nabla\cdot w=0,\ \ x\in\mathbb R^n,
\end{align}
where $w$ is given and $m$ denotes the solution.  With the aid of Lemma \ref{proposition-lemma21-FP}, we can obtain the crucial a-priori estimates satisfied by $m$.  To begin with, we recall that $\hat q$ is defined as \eqref{hatqconstraint}, and set 
${\hat q}^*=\frac{n\hat q}{n-\hat q}$ if $\hat q<n$, and ${\hat q}^*=+\infty$ if $\hat q\geq n$. Choose
 $\beta\in [\hat q,{\hat q^*}]$ such that 
$ \frac{1}{\hat q}=\frac{1}{\gamma'}+\frac{1}{\gamma \beta}$.
Then one can deduce from \eqref{hatqconstraint} that 
\begin{equation*}
\beta=\begin{cases}{\hat q}^*, \ &\text{ if }\gamma'<n,\\
\in(\hat q,{\hat q}^*), \ &\text{ if }\gamma'=n,\\
\infty,&\text{ if }\gamma'>n.\end{cases}
\end{equation*}
Set 
\begin{equation*}
0<\mathcal{S}_{\hat q,\gamma'}^{-1}:=\inf_{m\in W^{1, \hat q}(\mathbb R^n)} \frac{\|\nabla m\|^\theta_{L^{\hat q}(\mathbb R^n)}\|\nabla m\|^{1-\theta}_{L^{\hat q}(\mathbb R^n)}}{\|m\|_{L^{\beta}(\mathbb R^n)}}<\infty,\text{ where }\theta\in[0,1] \text{ satisfying }\frac{1}{\beta}=\theta(\frac{1}{{\hat q}}-\frac{1}{n})+1-\theta.
\end{equation*}
We then state the following lemma, which addresses the regularity of solutions to equation \eqref{sect2-FP-eq}.
\begin{lemma}[ C.f. Lemma 3.4 in \cite{cirant2025critical} ]\label{lemma21-crucial}    
Assume that $(m,w)\in \left(L^1(\mathbb R^n)\cap W^{1, \hat q}(\mathbb R^n)\right)\times L^1(\mathbb R^n)$ is a  solution to (\ref{sect2-FP-eq}) and
\begin{equation*}
\Lambda_{\gamma'}:=\int_{\mathbb R^n} m\Big|\frac{w}{m}\Big|^{\gamma'}\,dx<\infty.
\end{equation*}
Then, we have $w\in L^{1}(\mathbb R^n)\cap L^{\hat q}(\mathbb R^n)$ and there exists constant $\mathcal{C}=\mathcal{C}(\Lambda_{\gamma'},\|m\|_{L^1(\mathbb R^n)})>0$ such that
$$\|m\|_{W^{1,\hat q}(\mathbb R^n)}, \|w\|_{L^1(\mathbb R^n)},\|w\|_{L^{\hat q}(\mathbb R^n)}\leq \mathcal{C}.$$ More precisely, we have
\begin{equation*}
\|\nabla m\|_{L^{\hat q}(\mathbb R^n)}\leq \mathcal{S}_{\hat q,\gamma'}^{\frac{1}{\gamma-\theta}}\left(C_{\hat q} \Lambda_{\gamma'}^\frac{1}{\gamma'}\right)^{\frac{\gamma}{\gamma-\theta}}\|m\|_{L^1(\mathbb R^n)}^\frac{1-\theta} {\gamma-\theta},~
~
\|m\|_{L^{\hat q}(\mathbb R^n)}\leq  \mathcal{S}_{\hat q,\gamma'}^{\frac{1}{\gamma-\theta}} \left(C_{\hat q} \Lambda_{\gamma'}^\frac{1}{\gamma'}\right)^{\frac{\theta}{\gamma-\theta}}\|m\|_{L^1(\mathbb R^n)}^{\frac{1-\theta} {\gamma-\theta}+\frac{1}{\gamma'}},
\end{equation*}
and 
\begin{equation*}
\|w\|_{L^1(\mathbb R^n)}\leq  \Lambda_{\gamma}^\frac{1}{\gamma'}\|m\|_{L^1(\mathbb R^n)}^\frac{\gamma'-1}{\gamma'},~~\|w\|_{L^{\hat q}(\mathbb R^n)}\leq \Lambda_{\gamma'}^\frac{1}{\gamma'} \left(\mathcal{S}_{\hat q,\gamma'}\right)^{\frac{1}{\gamma-\theta}}\left(C_{\hat q} \Lambda_{\gamma'}^\frac{1}{\gamma'}\right)^{\frac{\theta}{\gamma-\theta}}\|m\|_{L^1(\mathbb R^n)}^\frac{1-\theta} {\gamma-\theta},
\end{equation*}
where 
\begin{equation*}
\theta=\frac{n\gamma'(\hat q-1)}{(r-1)(nq-n+q)}=\begin{cases} 1, \ &\text{ if }\gamma'<n,\\
\frac{n^2(\hat q-1)}{(n-1)(n\hat q-n+\hat q)} \ &\text{ if }\gamma'=n,\\
\frac{n\gamma'}{n\gamma'-n+\gamma'},&\text{ if }\gamma'>n.
\end{cases}
\end{equation*}
\end{lemma}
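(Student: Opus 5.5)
The plan is to test the weak Fokker--Planck equation against $\varphi\in C_c^\infty(\mathbb R^n)$ and control the right-hand side with Hölder's inequality weighted by $m$. This produces a bound of the type required in Lemma \ref{proposition-lemma21-FP}. That bound is then closed with the Gagliardo--Nirenberg/Sobolev inequality encoded in $\mathcal S_{\hat q,\gamma'}$.

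\textbf{Step 1: $L^1$ bound for $w$.} Write $|w|=m^{1/\gamma}\cdot m^{1/\gamma'}|w/m|$ and apply Hölder with exponents $(\gamma,\gamma')$:
\[
\|w\|_{L^1}\le \|m\|_{L^1}^{1/\gamma}\Lambda_{\gamma'}^{1/\gamma'},
\]
which is the stated bound, since $1/\gamma=(\gamma'-1)/\gamma'$.

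\textbf{Step 2: $L^{\hat q}$ bound for $w$.} Since $\frac1{\hat q}=\frac1{\gamma'}+\frac1{\gamma\beta}$, Hölder gives
\[
\|w\|_{L^{\hat q}}\le \Big(\int m|w/m|^{\gamma'}\Big)^{1/\gamma'}\|m^{1/\gamma}\|_{L^{\gamma\beta}}=\Lambda_{\gamma'}^{1/\gamma'}\|m\|_{L^\beta}^{1/\gamma}.
\]

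\textbf{Step 3: gradient bound for $m$.} From $\int\nabla m\cdot\nabla\varphi=\int w\cdot\nabla\varphi$ we get
\[
\Big|\int m\Delta\varphi\Big|\le \|w\|_{L^{\hat q}}\|\nabla\varphi\|_{L^{\hat q'}}.
\]
Lemma \ref{proposition-lemma21-FP} with $p=\hat q$ then yields
\[
\|\nabla m\|_{L^{\hat q}}\le C_{\hat q}\Lambda_{\gamma'}^{1/\gamma'}\|m\|_{L^\beta}^{1/\gamma}.
\]
By the definition of $\mathcal S_{\hat q,\gamma'}$ (read as the Gagliardo--Nirenberg interpolation between $\|\nabla m\|_{L^{\hat q}}$ and $\|m\|_{L^1}$),
\[
\|m\|_{L^\beta}\le \mathcal S_{\hat q,\gamma'}\|\nabla m\|_{L^{\hat q}}^{\theta}\|m\|_{L^1}^{1-\theta}.
\]
Substituting gives
\[
\|\nabla m\|_{L^{\hat q}}^{1-\theta/\gamma}\le C_{\hat q}\Lambda_{\gamma'}^{1/\gamma'}\mathcal S_{\hat q,\gamma'}^{1/\gamma}\|m\|_{L^1}^{(1-\theta)/\gamma}.
\]
Since $\theta\le1<\gamma$, the exponent $1-\theta/\gamma$ is positive. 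Raising to the power $\gamma/(\gamma-\theta)$ gives the stated gradient estimate. Feeding this back into the Gagliardo--Nirenberg inequality bounds $\|m\|_{L^\beta}$, and hence $\|w\|_{L^{\hat q}}$ via Step 2, with the announced exponents.

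\textbf{Step 4: $L^{\hat q}$ bound for $m$.} Interpolate between $L^1$ and $L^\beta$, using $1\le\hat q\le\beta$. Equivalently, apply the same interpolation with the exponent computed from $\frac1{\hat q}$. Collecting the powers of $\Lambda_{\gamma'}$ and $\|m\|_{L^1}$ gives the stated formula.

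\textbf{Main obstacle.} The delicate point is the absorption in Step 3. A priori we only know $m\in W^{1,\hat q}$, so $\|\nabla m\|_{L^{\hat q}}$ is finite and division is legitimate. The real concern is the endpoint cases. When $\gamma'=n$ one has $\beta<\infty$ and the choice of $\hat q$ must keep $\theta<1$. When $\gamma'>n$ one has $\beta=\infty$, so the Morrey/Gagliardo--Nirenberg form with $\theta=\frac{n\gamma'}{n\gamma'-n+\gamma'}$ must be used. In each of the three cases I would verify the value of $\theta$ directly from the scaling relation $\frac1\beta=\theta(\frac1{\hat q}-\frac1n)+1-\theta$.
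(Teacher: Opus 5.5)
Your proof is correct and follows essentially the same route as the paper. The paper gives no proof of its own and defers to Lemma 3.4 of \cite{cirant2025critical}, whose argument (as the constants $C_{\hat q}$ and $\mathcal S_{\hat q,\gamma'}$ in the statement indicate) is the one you give: Hölder's inequality for $w$, then Lemma \ref{proposition-lemma21-FP} to get $\|\nabla m\|_{L^{\hat q}}\le C_{\hat q}\|w\|_{L^{\hat q}}$, then the Gagliardo--Nirenberg inequality and absorption, with $\|m\|_{L^{\hat q}}\le\|m\|_{L^1}^{1/\gamma'}\|m\|_{L^\beta}^{1/\gamma}$ giving the last bound. The absorption only needs $\theta<\gamma$, which holds automatically since $\theta\le 1$, so no special care about keeping $\theta<1$ is needed when $\gamma'=n$.
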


Next, we turn our attention to the system \eqref{MFG-SS}, which is a coupled system consisting of a Hamilton-Jacobi equation and a Fokker-Planck equation. Under suitable assumptions on the population density 
$m$ and the Lagrange multiplier $\lambda,$ concerning the decay properties of $m$, we obtain the following lemma 
\begin{lemma}\label{mdecaylemma}
Assume that $(u,m,\mu)\in C^2(\mathbb R^n)\times \left(W^{1,p}(\mathbb R^n)\cap L^1(\mathbb R^n)\right)\times \mathbb R$ with $u$ bounded from below, $ p>n$ and $\mu>0$, is the solution of the following potential-free problem  
\begin{align}\label{26preliminaryfinal}
\left\{\begin{array}{ll}
-\Delta u+H(\nabla u)+\mu m^{\alpha}=1, &x\in\mathbb R^n,\\
\Delta m+\nabla\cdot(m\nabla H(\nabla u))=0, &x\in\mathbb R^n.
\end{array}
\right.
\end{align}
 Then, there exist $\kappa_1,\kappa_2>0$ such that 
\begin{align*}
m(x)\leq \kappa_1 e^{-\kappa_2|x|}  ~\text{ for all } x\in \mathbb R^n.
\end{align*}
\end{lemma}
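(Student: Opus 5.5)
The strategy is to show that $u$ grows at least linearly at infinity. We then use $e^{\kappa u}$-type test functions in the Fokker--Planck equation, or equivalently a supersolution comparison, to transfer this growth into exponential decay of $m$.

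\textbf{Step 1: $m$ is small at infinity.} Since $m\in W^{1,p}(\mathbb R^n)$ with $p>n$, Morrey's embedding makes $m$ H\"older continuous and gives $m(x)\to 0$ as $|x|\to\infty$. Hence $\mu m^\alpha(x)\to 0$, and there is $K_2$ with
\begin{align*}
f(x):=1-\mu m^{\alpha}(x)\ge \tfrac12 \quad \text{for } |x|>K_2 .
\end{align*}
We also have $f\in L^\infty$ and $f$ locally H\"older, so $u$ solves $-\Delta u+H(\nabla u)+0=f$ with multiplier $\lambda=0$.

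\textbf{Step 2: growth of $u$.} We apply the last part of Lemma~\ref{lowerboundVkgenerallemma22} to the single equation, with $V_k\equiv0$, $b=0$, $\lambda_k=0$, $f_k=f$ and $\sigma=\tfrac12$. The hypotheses of Lemma~\ref{sect2-lemma21-gradientu} hold trivially, and $u$ is bounded below by assumption. This yields $u(x)\ge C_3|x|-C_4$. The same lemma, or Lemma~\ref{sect2-lemma21-gradientu}, gives $|\nabla u|\le C$ globally.

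\textbf{Step 3: decay of $m$.} For $|x|>K_2$ the drift $b:=\nabla H(\nabla u)$ satisfies $b\cdot\nabla u=\gamma H(\nabla u)$ by Euler's identity. We seek a Lyapunov-type supersolution for the adjoint problem. Set $\phi=e^{\kappa u}$ with small $\kappa>0$ and compute, for the adjoint operator $\mathcal L\phi=\Delta\phi-b\cdot\nabla\phi$:
\begin{align*}
\mathcal L e^{\kappa u}=\kappa e^{\kappa u}\bigl(\Delta u+\kappa|\nabla u|^2-\gamma H(\nabla u)\bigr)=\kappa e^{\kappa u}\bigl(-f+(1-\gamma)H(\nabla u)+\kappa|\nabla u|^2\bigr).
\end{align*}
By \eqref{MFG-H}, $(1-\gamma)H(\nabla u)+\kappa|\nabla u|^2\le 0$ once $\kappa$ is small relative to the uniform bound on $|\nabla u|$. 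This uses $\gamma>1$ and the cases where $|\nabla u|$ is small; if the homogeneity exponent creates trouble near $\nabla u=0$, we simply use $|\nabla u|\le C$ so that $\kappa|\nabla u|^2\le \kappa C^2\le 1/4$. We conclude $\mathcal L e^{\kappa u}\le -\tfrac{\kappa}{4}e^{\kappa u}$ outside $B_{K_2}$.

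Next we test the weak Fokker--Planck equation $\int m\,\mathcal L\varphi=0$ with $\varphi=\chi_R e^{\kappa u}$, where $\chi_R$ is a cutoff, and let $R\to\infty$. This gives
\begin{align*}
\int_{|x|>K_2} m\,e^{\kappa u}\,dx\le C\int_{B_{K_2}}m\,e^{\kappa u}+\text{error terms}.
\end{align*}
The errors involve $\nabla\chi_R$ and $\Delta\chi_R$ and are controlled because $|\nabla u|$ is bounded. Absorbing them rigorously needs an a priori finiteness of $\int m\,e^{\kappa u}$; we first prove it with $\kappa$ replaced by a truncation $\min(\kappa u,T)$, or with $e^{\kappa\min(u,T)}$, and pass $T\to\infty$. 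Combined with Step 2, this gives $\int m\,e^{\kappa C_3|x|}\,dx<\infty$.

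\textbf{Step 4: pointwise bound.} We upgrade the weighted $L^1$ bound to a pointwise one by local elliptic estimates on unit balls. On $B_1(x_0)$, $m$ solves a divergence-form equation with bounded drift, so the local maximum principle (or the Harnack inequality) gives
\begin{align*}
m(x_0)\le C\|m\|_{L^1(B_2(x_0))}\le Ce^{-\kappa C_3(|x_0|-2)}.
\end{align*}
This yields $m(x)\le \kappa_1 e^{-\kappa_2|x|}$.

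The main obstacle is Step 3: justifying the test with the unbounded weight $e^{\kappa u}$ and handling the drift sign. The cutoff and truncation argument, together with the global gradient bound on $u$, is what I would try first.
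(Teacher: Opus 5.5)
Your argument is correct and follows the standard route that the paper defers to (Lemma 3.6 of the cited critical-mass paper): $m\to 0$ at infinity gives $1-\mu m^\alpha\ge \tfrac12$ far out; the $b=0$ case of Lemma~\ref{lowerboundVkgenerallemma22} gives linear growth of $u$; the Lyapunov function $e^{\kappa u}$, with $\kappa|\nabla u|^2\le\tfrac14$ from the global gradient bound, is tested in the Fokker--Planck equation after truncation; and a local maximum principle on unit balls gives the pointwise bound.

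Two small bookkeeping fixes are needed. In Step 2, take $V_k\equiv 1$, $f_k=-\mu m^\alpha$, $\lambda_k=0$, so that \eqref{cirant-Vk-1} with $b=0$ actually holds; with $V_k\equiv 0$ the hypotheses are not satisfied. In Step 3, use a smooth bounded truncation $g_T$ of $e^{\kappa s}$ with $g_T'\ge 0$ and $g_T''\le \kappa g_T'$, rather than $e^{\kappa\min(u,T)}$, which is not $C^2$. This $g_T$ satisfies $\mathcal L g_T(u)\le -\tfrac14 g_T'(u)$ outside $B_{K_2}$, the cutoff errors vanish as $R\to\infty$ because $m\in L^1$, and you then let $T\to\infty$ by monotone convergence.
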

\begin{proof}
By slightly modifying the proof of Lemma 3.6 in \cite{cirant2025critical}, we obtain the desired conclusion.
\end{proof}
We next collect the Pohozaev identities satisfied by the solution to (\ref{26preliminaryfinal}) in the following lemma  
\begin{lemma}\label{poholemma}
Let $(u,m,\mu)$ satisfy the assumptions of Lemma \ref{mdecaylemma} and denote $w=-m\nabla H(\nabla u)$. Then the following identities hold:
\begin{align}\label{eq2.49}
\left\{\begin{array}{ll}
\frac{1}{\mu}\int_{\mathbb R^n}m\, dx=\frac{(\alpha+1)\gamma'-n\alpha}{(\alpha+1)\gamma'}\int_{\mathbb R^n}m^{\alpha+1}\,dx,\\
\int_{\mathbb R^n}m L\left(-\frac{w}{m}\right)\, dx=\frac{n\mu \alpha}{(\alpha+1)\gamma'}\int_{\mathbb R^n}m^{1+\alpha}\, dx=(\gamma-1)\int_{\mathbb R^n} m H(\nabla u)\, dx.
\end{array}
\right.
\end{align}
\end{lemma}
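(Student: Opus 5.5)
Both identities come from testing the two equations of \eqref{26preliminaryfinal} against each other, and against the dilation field $x\cdot\nabla$. The decay of $m$ from Lemma \ref{mdecaylemma} and the polynomial growth of $u$ justify every integration by parts.

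\emph{Setup.} Since $u$ is bounded below and $1-\mu m^{\alpha}$ is bounded, Lemma \ref{sect2-lemma21-gradientu} with $V_k\equiv 0$ gives $\nabla u\in L^\infty$, so $w=-m\nabla H(\nabla u)$ decays exponentially. We work with cutoffs $\phi_R(x)=\phi(x/R)$. The errors are bounded by $R^{-1}\int_{B_{2R}\setminus B_R}(|x|+1)(m|\nabla u|^{\gamma}+|\nabla m||\nabla u|+m^{1+\alpha}+m)$. Using $\nabla m\in L^p$ and the exponential decay of $m$, these errors vanish as $R\to\infty$.

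\emph{First relation (energy identity).} Multiply the HJ equation by $m$ and the FP equation by $u$, integrate, and subtract. The Laplacian terms cancel. Euler's relation $\nabla H(p)\cdot p=\gamma H(p)$ gives
\[
\int m\nabla H(\nabla u)\cdot\nabla u=\gamma\int mH(\nabla u),
\]
so
\[
(1-\gamma)\int mH(\nabla u)+\mu\int m^{1+\alpha}=\int m .
\]
For the Lagrangian, the duality $L(-w/m)=\nabla H(\nabla u)\cdot\nabla u-H(\nabla u)=(\gamma-1)H(\nabla u)$ gives
\[
\int mL\Big(-\frac{w}{m}\Big)=(\gamma-1)\int mH(\nabla u).
\]
This is the second equality in the second line of \eqref{eq2.49}.

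\emph{Second relation (Pohozaev).} Multiply the HJ equation by $x\cdot\nabla m$ and the FP equation by $x\cdot\nabla u$, add, and integrate.
\begin{itemize}
\item The term $\mu\int m^{\alpha}x\cdot\nabla m$ equals $-\frac{n\mu}{1+\alpha}\int m^{1+\alpha}$.
\item The term $\int x\cdot\nabla m$ equals $-n\int m$.
\item The cross terms $-\Delta u\,(x\cdot\nabla m)+\Delta m\,(x\cdot\nabla u)$ combine, after integration by parts, to $(n-2)\int\nabla u\cdot\nabla m$ up to sign bookkeeping. The same quantity reappears from the transport terms, so these contributions must be tracked together.
\item For $H(\nabla u)\,x\cdot\nabla m+\nabla\cdot(m\nabla H)\,x\cdot\nabla u$, integrate by parts to get $-n\int mH-\int m\,x\cdot\nabla(H(\nabla u))-\int m\nabla H\cdot\nabla(x\cdot\nabla u)$. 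Since $\nabla(x\cdot\nabla u)=\nabla u+D^2u\,x$ and $x\cdot\nabla(H(\nabla u))=\nabla H\cdot D^2u\,x$, the Hessian terms cancel, leaving $-n\int mH-\gamma\int mH$.
\end{itemize}
The FP equation tested against $u$ gives $\int\nabla m\cdot\nabla u=-\gamma\int mH$, which removes the $\int\nabla u\cdot\nabla m$ terms. This yields a second linear relation among $A=\int mH$, $B=\int m^{1+\alpha}$ and $C=\int m$, of the form
\[
c_1A=\frac{n\mu}{1+\alpha}B-nC .
\]

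\emph{Solving.} Solve the two linear relations for $A$ and $C$ in terms of $\mu B$. Then check that $(\gamma-1)A=\frac{n\alpha\mu}{(\alpha+1)\gamma'}B$ and $C=\mu\frac{(\alpha+1)\gamma'-n\alpha}{(\alpha+1)\gamma'}B$. A cleaner way to see the same relations, and to cross-check the constants, is the scaling formulation. Setting $m_t=t^nm(tx)$ and $w_t=t^{n+1}w(tx)$ preserves mass, and the derivative at $t=1$ of the associated functional must vanish at a critical point. That gives $\gamma'\int mL=\frac{n\alpha\mu}{1+\alpha}B$ directly, hence the stated constants (first line divided by $\mu$).

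\emph{Main obstacle.} The delicate step is the rigorous justification of the Pohozaev computation. It involves $D^2u$, which is only in $L^p_{\rm loc}$ via Lemma \ref{thmmaximalregularity}, and the cutoff boundary terms. I would first bound $x\,m$ times the second-derivative terms using the exponential decay of $m$ together with local $W^{2,p}$ bounds on unit balls, which are uniform because $|\nabla u|$ is bounded.
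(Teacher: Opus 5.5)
\textbf{Gap: the Pohozaev bookkeeping is wrong, and it is the step that produces the constants.}

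Your scheme matches the paper's: an energy identity from testing the HJ equation by $m$ and the FP equation by $u$, a Pohozaev identity from the dilation field, and $\int\nabla u\cdot\nabla m=-\gamma\int mH(\nabla u)$ to eliminate the cross term. Your energy identity $(1-\gamma)\int mH(\nabla u)+\mu\int m^{1+\alpha}=\int m$ and the duality $mL(-w/m)=(\gamma-1)mH(\nabla u)$ are correct.

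The problem is what happens when you add the FP equation, in the form $\Delta m+\nabla\cdot(m\nabla H(\nabla u))=0$, after testing it with $x\cdot\nabla u$. Neither of your two simplifications holds in that case.
\begin{itemize}
\item The Laplacian cross terms give $\int x\cdot\big(D^2m\,\nabla u-D^2u\,\nabla m\big)$. This does not reduce to a multiple of $\int\nabla u\cdot\nabla m$.
\item In the Hamiltonian group, the Hessian contributions from $-\int m\,x\cdot\nabla(H(\nabla u))$ and from $-\int m\nabla H(\nabla u)\cdot\nabla(x\cdot\nabla u)$ are both equal to $-\int m\nabla H(\nabla u)\cdot D^2u\,x$. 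They add rather than cancel, as your own intermediate expression shows.
\end{itemize}
The cross-term reduction only works for the symmetric combination. Your value $-(n+\gamma)\int mH$ is what the summed combination gives once the Hessian terms are dropped. Put together, these give $c_1=\mp(n-2)\gamma-n-\gamma$, and neither sign equals the correct $n\gamma-n-\gamma$ when $n\geq2$. So solving with the energy identity does not return \eqref{eq2.49}.

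You leave $c_1$ unspecified and fall back on the scaling argument. As written, that argument is only formal: nothing in your proposal shows that a solution of \eqref{26preliminaryfinal} is stationary along $t\mapsto(m_t,w_t)$ for this non-smooth functional. That stationarity is exactly the content of the lemma.

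\textbf{Two ways to close the gap.}

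(a) Use the symmetric combination. Multiply the HJ equation by $x\cdot\nabla m$, multiply the FP equation written as $-\Delta m-\nabla\cdot(m\nabla H(\nabla u))=0$ by $x\cdot\nabla u$, and add. Then $\int(\Delta u\,x\cdot\nabla m+\Delta m\,x\cdot\nabla u)=(n-2)\int\nabla u\cdot\nabla m$. The Hamiltonian terms give $-n\int mH-\int m\nabla H\cdot D^2u\,x+\gamma\int mH+\int m\nabla H\cdot D^2u\,x=(\gamma-n)\int mH$, where the Hessian terms now genuinely cancel. This yields the paper's identity
\begin{equation*}
n\int m-\frac{n\mu}{1+\alpha}\int m^{1+\alpha}+(\gamma-n)\int mH(\nabla u)+(2-n)\int\nabla u\cdot\nabla m=0,
\end{equation*}
so $c_1=n\gamma-n-\gamma=(\gamma-1)(n-\gamma')$. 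Eliminating $\int m$ with the energy identity gives $\gamma\int mH=\frac{n\alpha\mu}{1+\alpha}\int m^{1+\alpha}$, and hence both lines of \eqref{eq2.49}.

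(b) Alternatively, make your scaling argument rigorous by convex duality. By Fenchel--Young, $m'L(-w'/m')\geq -w'\cdot\nabla u-m'H(\nabla u)$. Testing the weak FP relation of $(m',w')$ with $u$ and using the HJ equation then gives
\begin{equation*}
\Phi(m',w'):=\int m'L\Big(-\frac{w'}{m'}\Big)+\int m'-\mu\int m^{\alpha}m'\geq 0
\end{equation*}
for every admissible pair with enough decay. Equality holds at $(m,w)$ because $w=-m\nabla H(\nabla u)$. Along your curve, $t\mapsto\Phi(m_t,w_t)=t^{\gamma'}\int mL+\int m-\mu\int m^{\alpha}(x)\,t^{n}m(tx)\,dx$ is differentiable and minimal at $t=1$. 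Its vanishing derivative is exactly $\gamma'\int mL=\frac{n\alpha\mu}{1+\alpha}\int m^{1+\alpha}$. No Hessian terms appear, and only the integration by parts against $u$ uses the decay from Lemma \ref{mdecaylemma}.
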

\begin{proof}
Similarly as the argument shown in Lemma 3.7  of  \cite{cirant2025critical},
using the exponential decay property of $m$ obtained in Lemma \ref{mdecaylemma}, we test the first equation in \eqref{26preliminaryfinal} against $\nabla m \cdot x$ and the second equation against $\nabla u \cdot x$, take the sum of the resulting expressions, and perform integration by parts to deduce
\begin{align}\label{mar18poho1}
 n\int_{\mathbb R^n}m\,dx-\frac{n\mu }{\alpha+1}\int_{\mathbb R^n}m^{\alpha+1}\,dx+ (\gamma-{n})\int_{\mathbb R^n} m H(\nabla u)\,dx+(2-n)\int_{\mathbb R^n}\nabla u\cdot \nabla m\,dx=0.
\end{align}
We multiply the $m$-equation by $u$ in (\ref{26preliminaryfinal}) and integrate it by parts to get
\begin{align}\label{225pohobefore18}
\int_{\mathbb R^n} \nabla m\cdot \nabla udx=-\int_{\mathbb R^n}m\nabla H(\nabla u)\cdot\nabla u\,dx=-\gamma\int_{\mathbb R^n}m H(\nabla u)\,dx.
\end{align}
where in the second equality, we used assumption (H1) satisfied by $H.$  By using (\ref{mar18poho1}) and (\ref{225pohobefore18}), one has
\begin{align}\label{combine1mar18}
 n\int_{\mathbb R^n}m\,dx-\frac{n\mu }{\alpha+1}\int_{\mathbb R^n}m^{\alpha+1}\,dx+\frac{n-\gamma'}{\gamma'-1}\int_{\mathbb R^n} mH(\nabla u)\,dx=0.
\end{align}

In addition, we multiply the $u$-equation and the $m$-equation in (\ref{26preliminaryfinal}) by $m$ and $u$, respectively, then subtract them and integrate by parts to obtain
\begin{align}\label{mar18poho2}
(1-\gamma)\int_{\mathbb R^n}mH(\nabla u)\,dx-\int_{\mathbb R^n}m\,dx =-\mu\int_{\mathbb R^n}m^{\alpha+1}\, dx.
\end{align}
Combining (\ref{combine1mar18}) with (\ref{mar18poho2}), we find the first identity in (\ref{eq2.49}) holds.  Moreover, we invoke (\ref{mar18poho2}) and the first identity in (\ref{eq2.49}) to get
\begin{align}\label{togetmar18plug}
\frac{n\mu \alpha}{(\alpha+1)\gamma'}\int_{\mathbb R^n}m^{1+\alpha}\, dx=(\gamma-1)\int_{\mathbb R^n} m H(\nabla u)\, dx.
\end{align}
In addition, by using the facts that $H$ is strictly convex, $w=-m\nabla H(\nabla u)$ and assumption (H1), we have 
\begin{align}\label{finalpohomar18}
(\gamma-1)H(\nabla u)=L\left(-\frac{w}{m}\right).
\end{align}
Substituting (\ref{finalpohomar18}) into (\ref{togetmar18plug}), we obtain the second identity in (\ref{eq2.49}) holds.

\end{proof}

Now, we are ready to study the existence of solutions to the auxiliary problem.  More specifically, we shall prove Lemma \ref{potential_MFG} and Lemma \ref{Existience_of_MFG_free_potential} in Section \ref{MFG_sys}.


\section{Existence of Solutions to Auxiliary MFG Systems}\label{MFG_sys}
This section is devoted to the investigation of minimization problem (\ref{delta_problem}) and the corresponding limit of $\delta \rightarrow 0^{+}.$  We shall show that the minimizer of (\ref{delta_problem}) is associated with a solution to an auxiliary MFG system with the coercive potential, which is shown in Lemma \ref{potential_MFG}.   Then by taking the limit of $\delta\rightarrow 0^{+},$ we show the potential-free auxiliary system admits a classical solution, where the key ingredient is the introduction of a {\em {two-stage}} linearization argument for finding the value function 
 $u$, based on the variational relation associated with the multiplier  $\mu.$ 

\subsection{Auxiliary systems with the coercive potential}\label{potential_MFG_sys}
In this subsection, we shall prove that the minimization problem \eqref{delta_problem}  with energy $\mathcal{E}_{\delta}$ given by \eqref{delta_energy_functional} has a minimizer $(m_{\delta},w_{\delta})\in\mathcal{A}\cap \left\{m: \Vert m\Vert_{L^{1+\alpha}(\mathbb R^n)}=1 \right\}$ for any $\delta\in(0,1)$. Moreover, we show that there exists $(u_{\delta},\mu_{\delta})\in C^{2}(\mathbb{R}^{n})\times \mathbb{R}$ such that $(u_{\delta},m_{\delta},\mu_{\delta})\in  C^{2}(\mathbb{R}^{n})\times W^{1,p}(\mathbb{R}^{n})\times\mathbb{R}$ ($\forall p>1$) is a classical solution to the auxiliary MFG system \eqref{MFG_mass_supercritical}.   We remark that the regularity of $m$ established in Lemma \ref{lemma21-crucial} will be weak if $\gamma'$ is small. Consequently, a regularization and linearization procedure for the constraint set is necessary to obtain the solution of \eqref{MFG_mass_supercritical}.  Actually, by Lemma \ref{lemma21-crucial}, if $\gamma'>n$, one can directly obtain uniform boundedness of 
 $m_k$ in $L^{\infty}(\mathbb R^n)$ and $C^{0,\theta}(\mathbb R^n)$ for some $\theta\in(0,1)$, when $(m_k,w_k)$ is a minimizing sequence.  In contrast, when $\gamma'\leq n$,  such uniform $L^\infty$ bounds are no longer available, and regularization is required to ensure the {$L^\infty$}-boundedness of 
$m.$ 

Next, we first focus on the case of $\gamma'\leq n$. To this end, let $\eta_{k}\geq 0$ be the standard mollifier, satisfying
$$\int_{\mathbb R^n}\eta_{k}\,dx=1, \ \ \text{supp}\eta_{k}\in B_{\frac{1}{k}}(0),$$
for $k>0$,  we consider the following auxiliary minimization problem 
\begin{align}\label{delta_problem_tilde}
	\tilde{e}_{\delta, k}:=\inf_{(m,w)\in\mathcal{A}\cap \left\{m: \Vert m\ast \eta_{k}\Vert_{L^{1+\alpha}(\mathbb R^n)}=1\right\}}\mathcal E_{\delta}(m,w),
\end{align}
where {$\mathcal{E}_{\delta}(m,w)$ is given by \eqref{delta_energy_functional}. Here and in the following of this section, $\alpha\in(0,\alpha^{*})$, $\delta\in(0,1)$ is a parameter,} $\mathcal{A}$ denotes the constraint set defined in \eqref{constraint-set-A} and $V$ satisfies the conditions (V1) and (V2).


By applying the mollification and then taking the limit, we can establish the existence of a solution to the auxiliary MFG system. The corresponding results are summarized as follows.
{
\begin{lemma}\label{potential_MFG}
Assume that $V$ satisfies (V1)-(V2). Let $\gamma'>1$ and $\alpha\in(0,\alpha^{*})$. Then, for each $\delta\in(0,1)$, the minimization problem $e_{\delta}$ in \eqref{delta_problem} admits a minimizer $(m_{\delta},w_{\delta}) \in \mathcal{A}\cap \left\{m: \Vert m\Vert_{L^{1+\alpha}(\mathbb R^n)}=1 \right\}$. Correspondingly, there exists a classical solution ${\big(u_{\delta}, m_{\delta},\mu_{\delta}\big)\in  C^2(\mathbb R^n)\times W^{1,p}(\mathbb R^n)\times \mathbb R}$, $\forall p>1,$ with $\mu_{\delta}=e_{\delta}$, to the following auxiliary MFG system:
\begin{align}\label{MFG_mass_supercritical}
\left\{\begin{array}{ll}
-\Delta u_{\delta}+H(\nabla u_{\delta})+\mu_{\delta} m_{\delta}^{\alpha}=\delta V(x)+1,&x\in\mathbb R^n,\\
\Delta m_{\delta}+\nabla\cdot (m_{\delta} \nabla H(\nabla u_{\delta}))=0,&x\in\mathbb R^n,\\
w_{\delta}=-m_{\delta} \nabla H(\nabla u_{\delta}),\ \int_{\mathbb R^n}m_{\delta}^{1+\alpha}\,dx=1.
\end{array}
\right.
\end{align}
\end{lemma}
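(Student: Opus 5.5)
The plan is to follow the regularization scheme of \cite{cesaroni2018concentration,cirant2025critical}, adapted to the $L^{1+\alpha}$ constraint. I treat the case $\gamma'\le n$ via the mollified problem \eqref{delta_problem_tilde}; when $\gamma'>n$ the same steps apply with $k=\infty$, since Lemma \ref{lemma21-crucial} already gives uniform $L^\infty$ and H\"older bounds.

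\emph{Step 1: existence of a minimizer $(m_{\delta,k},w_{\delta,k})$ of $\tilde e_{\delta,k}$.} Take a minimizing sequence. Boundedness of $\mathcal E_\delta$ controls $\int mL(-w/m)$, $\int m$ and $\int Vm$. By \eqref{MFG-L} and Lemma \ref{lemma21-crucial}, this gives uniform bounds on $\|m\|_{W^{1,\hat q}}$ and on $\|w\|_{L^1\cap L^{\hat q}}$. Since $V\to\infty$, the sequence is tight, so $m$ converges strongly in $L^1$. Because mollification is a compact operator on tight sets, $m*\eta_k$ converges strongly in $L^{1+\alpha}$, and the constraint passes to the limit. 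Convexity and lower semicontinuity of $(m,w)\mapsto mL(-w/m)$ then give a minimizer.

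\emph{Step 2: two-stage linearization to produce $u$.} Fix the minimizer and set $\bar m_k=m_{\delta,k}*\eta_k$. The constraint is linearized as $\int (m*\eta_k)\bar m_k^\alpha=1$. Since the functional is convex, $(m_{\delta,k},w_{\delta,k})$ minimizes
\[
\int mL(-w/m)+\int(\delta V+1)m-\mu_k\Big(\int (m*\eta_k)\bar m_k^{\alpha}-1\Big)
\]
over the convex set $\mathcal A$ for a suitable multiplier $\mu_k$. The multiplier is found by a convex duality/supporting hyperplane argument applied to the map $t\mapsto\inf\{\mathcal E_\delta:\int (m*\eta_k)\bar m_k^\alpha=t\}$. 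I then apply Lemma \ref{lemma22preliminary}(iii) with $f_k=-\mu_k(\bar m_k^\alpha)*\eta_k$, which is bounded and H\"older, and $V_k=\delta V+1$. This yields $(u_k,\lambda_k)$ with $u_k$ bounded below and satisfying \eqref{29uklemma22}. The standard duality argument of \cite{cesaroni2018concentration} then shows that the minimizer satisfies $w=-m\nabla H(\nabla u_k)$ and solves the Fokker--Planck equation. This step uses the test against $u_k$, which is justified by the growth of $u_k$ and the condition $\int Vm<\infty$. The linearization produces an extra multiplier $\lambda_k$ in the $u$-equation. To show $\lambda_k=0$, I use two facts. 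First, testing the HJ equation by $m$ and the FP equation by $u$ gives $\lambda_k\int m=\mathcal E_\delta-\mu_k\int(m*\eta_k)\bar m_k^\alpha$. Second, differentiating the energy along the scaling $s\mapsto (sm,sw)$ (normalized to stay on the constraint) shows $\mathcal E_\delta=\mu_k$ and hence $\mu_k=\tilde e_{\delta,k}$. Together these force $\lambda_k=0$.

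\emph{Step 3: limit $k\to\infty$.} Since $\tilde e_{\delta,k}$ is bounded, $\mu_k$ is bounded. I then bootstrap regularity on $m_k$ using Lemma \ref{thmmaximalregularity} together with Lemma \ref{proposition-lemma21-FP}. The exponent of integrability of $m^\alpha$ is improved step by step via $\alpha<\alpha^*$: an $L^p$ bound on $m^\alpha$ gives an $L^p$ bound on $|\nabla u|^{\gamma}$, which gives a better bound on $\nabla m$, and so on, until $m_k$ is uniformly bounded in $W^{1,p}$ for every $p$ and hence in $L^\infty$. After this, Lemma \ref{sect2-lemma21-gradientu} and Lemma \ref{lowerboundVkgenerallemma22} give locally uniform $C^2$ bounds on $u_k$ and uniform coercivity. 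Passing to the limit yields $(u_\delta,m_\delta,\mu_\delta)$ solving \eqref{MFG_mass_supercritical}. Lower semicontinuity, combined with $\tilde e_{\delta,k}\to e_\delta$ (proved by mollifying competitors), shows that $(m_\delta,w_\delta)$ minimizes $e_\delta$ and that $\mu_\delta=e_\delta$.

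I expect the main obstacle to be Step 2. The difficulty is producing $u$ without having $\inf m>0$, and proving rigorously that the auxiliary multiplier vanishes. The integration by parts against the unbounded $u_k$ needs to be justified; I plan to do this with cutoffs, using $|\nabla u_k|\le C(1+V)^{1/\gamma}$ and $\int Vm<\infty$. The bootstrap in Step 3 near $\alpha^*$ is the secondary difficulty.
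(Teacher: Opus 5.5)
Your architecture matches the paper's: the mollified problem $\tilde e_{\delta,k}$, a two-stage linearization with an auxiliary ergodic constant shown to vanish, then $k\to\infty$. However, one step in Step 2 is unjustified and Step 3 has a genuine gap.

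In Step 2, convexity of $\mathcal E_\delta$ does not let you replace the constraint $\{\|m*\eta_k\|_{L^{1+\alpha}}=1\}$, which is a sphere and not convex, by the linear one. You need that $(m_{\delta,k},w_{\delta,k})$ also minimizes $\mathcal E_\delta$ on $\{\int (m*\eta_k)\bar m_k^\alpha=1\}$. Equivalently, $\mathcal E_\delta(m,w)\ge \tilde e_{\delta,k}\int (m*\eta_k)\bar m_k^\alpha$ must hold for all admissible pairs. This follows from H\"older, $\int (m*\eta_k)\bar m_k^\alpha\le\|m*\eta_k\|_{L^{1+\alpha}}$, together with the $1$-homogeneity and nonnegativity of $\mathcal E_\delta$; this is Proposition \ref{exi_min_appro_re}. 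Without it, your supporting-hyperplane argument for $t\mapsto\inf\{\mathcal E_\delta:\int(m*\eta_k)\bar m_k^\alpha=t\}$ gives nothing, since that map is linear in $t$ by homogeneity. Once this is inserted, your testing-plus-scaling proof that $\lambda_k=0$ is fine.

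The genuine gap is the $k$-uniform $L^\infty$ bound in Step 3. Lemma \ref{thmmaximalregularity} is local, and its constant depends on $\|f\|_{L^p(\Omega)}$, which contains $\delta V$; the bound degenerates on balls going to infinity. Lemma \ref{proposition-lemma21-FP} is global and would need a global $L^p$ bound on $m_k|\nabla u_k|^{\gamma-1}$, but $|\nabla u_k|$ is unbounded, of order $V^{1/\gamma}$. So your bootstrap only gives bounds on compact sets. There no iteration is even needed: one application with $p=1+\frac1\alpha>\frac n{\gamma'}$, which is exactly $\alpha<\alpha^*$, already puts the drift in $L^q_{loc}$ with $q>n$.

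What is missing is control of $m_k$ where $\delta V$ is large. The paper supplies it in Lemma \ref{blowupanalysismlinfboundcritical}, which proves \eqref{crucialandnewclaim} by blowing up at points where $m_k^\alpha\ge\varepsilon\,\delta V$, at scale $a_l=(\delta V(x_l))^{-1/\gamma'}$. In that blow-up:
\begin{itemize}
\item assumption \eqref{V2mainassumption_2} keeps the rescaled potential bounded;
\item $\alpha<\alpha^*$ makes the rescaled $L^{1+\alpha}$ norm vanish;
\item the resulting lower bound on $m_k$ near $x_l$ contradicts $\sup_k\int Vm_k<\infty$.
\end{itemize}
A Lyapunov-function argument then finishes the proof.

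Without a $k$-uniform bound on $\|m_k\|_{L^\infty(\mathbb R^n)}$ you cannot apply Lemma \ref{sect2-lemma21-gradientu}, which needs $\sup_k\|f_k\|_{L^\infty}<\infty$. You therefore lose the uniform gradient bound and the localization of the minimum point of $u_k$, which is what gives the uniform lower bound. You also lose the global $W^{1,p}(\mathbb R^n)$ regularity of $m_\delta$ that the statement claims. The real obstacle is spatial infinity, not the exponent approaching $\alpha^*$.
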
   }

Before proving Lemma \ref{potential_MFG}, we collect several key lemmas and propositions that will be used in the subsequent arguments.
\begin{lemma}\label{lem4.1}
	Let $V(x)\in L^\infty_{\rm loc}(\mathbb R^n)$ satisfy $\lim\limits_{|x|\to+\infty}V(x)=+\infty$, and define
$$\mathcal{W}_{p,V}:=\bigg\{m \big|\ m\in W^{1,p}(\mathbb R^n)\cap L^1(\mathbb R^n) \text{ and }\int_{\mathbb R^n}V(x)|m|\,dx<\infty\bigg\}.$$
	Then, the embedding $\mathcal{W}_{p,V}\hookrightarrow L^q(\mathbb R^n)$ is compact for any $1\leq q<p^*$, where $p^*=\frac{np}{n-p}$ if $1\leq p<n$ and $p^*=\infty$ if $p\geq n$.
\end{lemma}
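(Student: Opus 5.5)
The plan is the standard one for compact embeddings into weighted spaces: a Rellich--Kondrachov argument on balls, a uniform tail estimate coming from the coercivity of $V$, and interpolation to reach every $q<p^*$. Take a bounded sequence $(m_k)$ in $\mathcal{W}_{p,V}$, with the norm
\[
\|m\|_{\mathcal{W}_{p,V}}:=\|m\|_{W^{1,p}(\mathbb R^n)}+\|m\|_{L^1(\mathbb R^n)}+\int_{\mathbb R^n}V|m|\,dx ,
\]
and set $\sup_k\|m_k\|_{\mathcal{W}_{p,V}}\le C_0$. We may assume $V\ge 0$ outside a large ball, which is harmless since $V\to+\infty$.

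\textbf{Step 1: local compactness.} For each $R>0$, the Rellich--Kondrachov theorem on $B_R(0)$ gives a subsequence converging strongly in $L^q(B_R)$ for every $q<p^*$, hence in $L^1(B_R)$. Taking $R=1,2,\dots$ and a diagonal argument, we get a subsequence (not relabeled) and a limit $m\in L^1_{\rm loc}$ with $m_k\to m$ in $L^1(B_R)$ for every $R$. Weak lower semicontinuity and Fatou's lemma then show that $m\in\mathcal{W}_{p,V}$, with norm at most $C_0$.

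\textbf{Step 2: tails in $L^1$ via $V$.} Given $\varepsilon>0$, choose $R$ so large that $V\ge C_0/\varepsilon$ on $|x|\ge R$. Then
\[
\int_{|x|\ge R}|m_k|\,dx\le \frac{\varepsilon}{C_0}\int_{\mathbb R^n} V|m_k|\,dx\le\varepsilon ,
\]
uniformly in $k$, and the same bound holds for $m$. Combined with Step 1, this gives $\limsup_k\|m_k-m\|_{L^1(\mathbb R^n)}\le 2\varepsilon$, so $m_k\to m$ in $L^1(\mathbb R^n)$. This is the case $q=1$.

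\textbf{Step 3: interpolation up to $p^*$.} For $1<q<p^*$, fix $r$ with $q<r<p^*$; for $p\ge n$, $r$ may be any large finite number. By the Sobolev or Gagliardo--Nirenberg embedding, $(m_k-m)$ is bounded in $L^r(\mathbb R^n)$, since it is bounded in $W^{1,p}\cap L^1$. Hölder interpolation gives
\[
\|m_k-m\|_{L^q}\le \|m_k-m\|_{L^1}^{\vartheta}\,\|m_k-m\|_{L^r}^{1-\vartheta},\qquad \frac1q=\vartheta+\frac{1-\vartheta}{r},
\]
with $\vartheta>0$, so the right-hand side tends to $0$.

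The only delicate step is the uniform tail control. Since $V$ is merely $L^\infty_{\rm loc}$, we cannot use it pointwise near the origin, which is why we split at a large radius and use only $V\ge C_0/\varepsilon$ outside $B_R$. Also, strong convergence at the endpoint $q=p^*$ fails because of concentration, which is why the statement requires $q<p^*$ strictly.
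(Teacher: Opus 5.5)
Your argument is correct. The paper does not prove this lemma itself; it only cites \cite[Theorem XIII.67]{reed2012methods} and \cite[Theorem 2.1]{bartsch1995}. Those are $L^2$/Hilbert-space results: compactness of the form domain of $-\Delta+V$, i.e.\ of $\{u\in H^1(\mathbb R^n):\int_{\mathbb R^n}Vu^2\,dx<\infty\}$, into $L^q$ for $q\ge 2$.

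Your proof uses the standard mechanism behind such results: Rellich--Kondrachov on balls, a uniform tail bound from the coercive weight, and H\"older interpolation against a Sobolev bound. The difference is that you carry it out directly in the setting the lemma actually states, namely $W^{1,p}\cap L^1$ with the $L^1$-type weight $\int V|m|\,dx$ and exponents down to $q=1$. This matters, because Proposition~\ref{exi_min_original} invokes the lemma to get strong convergence in $L^1\cap L^{1+\alpha}$. The cited results do not literally give that, so your argument is a self-contained justification rather than an appeal to an analogous statement.

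Two small repairs are needed.

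(i) Since $V\in L^\infty_{\rm loc}$ and $V\to+\infty$, $V$ is bounded below. Replace $V$ by $V+C\ge 0$ on all of $\mathbb R^n$; thanks to the $L^1$ term, this changes neither the space nor its bounded sets. Nonnegativity only outside a ball does not justify $\int_{|x|\ge R}V|m_k|\,dx\le\int_{\mathbb R^n}V|m_k|\,dx$ in Step~2, nor does it make $\int V|m|\,dx$ a seminorm.

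(ii) For $p=1$, the $L^1_{\rm loc}$-limit of a bounded sequence in $W^{1,1}$ is in general only in $BV$, so the claim $m\in\mathcal W_{p,V}$ in Step~1 can fail. You do not need it. Fatou's lemma already gives $\|m\|_{L^r}\le\liminf_k\|m_k\|_{L^r}$ and $\int_{|x|\ge R}V|m|\,dx\le C_0$, which is all that Steps~2 and~3 use.
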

\begin{proof}
	See Theorem XIII.67 in \cite{reed2012methods} or Theorem 2.1 in \cite{bartsch1995} for details.
\end{proof}

{
\begin{lemma}\label{blowupanalysismlinfboundcritical}
	Suppose that V is locally Hölder continuous and satisfies (V1) and (V2) given in \eqref{V2mainasumotiononv}. Let $(u_{k},m_{k},\mu_{k})\in C^2(\mathbb R^n)\times \left(L^1(\mathbb R^n)\cap L^{1+\alpha}(\mathbb R^n)\right)\times \mathbb R$, $\alpha\in(0,\alpha^{*}) $, be solutions to the following systems
	\begin{align}\label{eq-potential-newest}
		\left\{\begin{array}{ll}
			-\Delta u+H(\nabla u)+\mu_k g_k[m]=\delta_k V(x+y_{k})+C, &x\in\mathbb R^n,\\
			\Delta m+\nabla\cdot(m\nabla H(\nabla u))=0, &x\in\mathbb R^n,\\
			\int_{\mathbb R^n}G_{k}[m]\,dx=c,
		\end{array}
		\right.
	\end{align}  
 {where $y_k$ is a sequence, $\delta_k$ is a bounded sequence,  $c,C>0$ are constants independent of $k$},
 $G_{k}[m]:=\int_{0}^ {m}g_{k}[s]\,ds$ and $g_{k}[\cdot]$ satisfies for all $m\in L^p(\mathbb R^n)$, $p\in[1,\infty]$ and $k\in \mathbb N$,
	\begin{equation}\label{gkm-newest-estimate-crucial2024207}
		\|g_k[m]\|_{L^p(\mathbb R^n)}\leq K\bigg(\|m^{\alpha}\|_{L^p(\mathbb R^n)}+1\bigg) \ \text{ for some }K>0,
	\end{equation}
	and 
	\begin{equation}\label{gkm-newest-estimate-crucial202420711}
		\|g_k[m]\|_{L^p(B_R(x_0))}\leq K\bigg(\|m^{\alpha}\|_{L^p(B_{2R}(x_0))}+1\bigg) \ \text{ for any }R>0\text{ and }x_0\in \mathbb R^n.
	\end{equation}
	Assume that
	\begin{align}\label{assumptionsincrucialemmanewest}
		\sup_k\|m_k\|_{L^1(\mathbb R^n)}<\infty,~~\sup_k\|m_k\|_{L^{1+\alpha}(\mathbb R^n)}<\infty,~~\sup_k\int_{\mathbb R^n}V(x)m_k\,dx<\infty,~~\sup_k|\mu_k|<\infty,
	\end{align}
	and for all $k$, $u_k$ is bounded from below uniformly.  Then we have 
	\begin{equation}\label{uniformlyboundmkinlinfty}
		\limsup_{k\to\infty}\|m_k\|_{L^\infty(\mathbb \R^n)}<\infty.
	\end{equation}
    \end{lemma}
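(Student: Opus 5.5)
We argue by contradiction through a blow-up analysis, following the scheme of Lemma 3.5 in \cite{cirant2025critical} and Theorem 4.1 in \cite{cesaroni2018concentration}. Suppose, along a subsequence, $\sigma_k:=\|m_k\|_{L^\infty(\mathbb R^n)}\to+\infty$, and pick $x_k$ with $m_k(x_k)\geq \sigma_k/2$. We rescale by
\begin{align*}
\epsilon_k:=\sigma_k^{-\frac{\alpha}{\gamma'}},\quad \tilde u_k(x):=\epsilon_k^{\frac{2-\gamma}{\gamma-1}}u_k(\epsilon_k x+x_k),\quad \tilde m_k(x):=\sigma_k^{-1}m_k(\epsilon_k x+x_k).
\end{align*}
Since $H$ is $\gamma$-homogeneous, we get
\begin{align*}
-\Delta\tilde u_k+H(\nabla\tilde u_k)+\epsilon_k^{\gamma'}\mu_k g_k[m_k](\epsilon_kx+x_k)=\epsilon_k^{\gamma'}\big(\delta_kV(\epsilon_kx+x_k+y_k)+C\big),
\end{align*}
together with $\Delta\tilde m_k+\nabla\cdot(\tilde m_k\nabla H(\nabla\tilde u_k))=0$. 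The constraint $0\le\tilde m_k\le1$ with $\tilde m_k(0)\ge 1/2$ holds. By \eqref{gkm-newest-estimate-crucial202420711}, the coupling term $\epsilon_k^{\gamma'}\mu_k g_k[m_k]$ is bounded in $L^\infty$ (since $\epsilon_k^{\gamma'}\sigma_k^{\alpha}=1$ and $\sup|\mu_k|<\infty$). The $L^1$ mass of $\tilde m_k$ equals $\sigma_k^{-1}\epsilon_k^{-n}\|m_k\|_{L^1}=\sigma_k^{\frac{n\alpha}{\gamma'}-1}\|m_k\|_{L^1}$. In the $L^{1+\alpha}$ normalisation the relevant quantity is $\int\tilde m_k^{1+\alpha}=\sigma_k^{\frac{n\alpha}{\gamma'}-1-\alpha}\|m_k\|^{1+\alpha}_{L^{1+\alpha}}$, and this tends to $0$ exactly because $\alpha<\alpha^*$, i.e.\ $n\alpha<\gamma'(1+\alpha)$.

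\textbf{Step 1 (where the potential is controlled).} We must show that the right-hand side stays locally bounded after rescaling. Using $\sup_k\int Vm_k<\infty$ and $m_k\ge \sigma_k/4$ on a small ball around $x_k$ (this needs local H\"older continuity of $m_k$, which we obtain from Step 2 applied locally), we get $V(x_k+y_k)\lesssim \sigma_k^{-1}\epsilon_k^{-n}$ when $\delta_k$ is not small. Conditions \eqref{V2mainassumption_2}--\eqref{V2mainassumption_3} then give $V(\epsilon_kx+x_k+y_k)\le C V(x_k+y_k)$ for $|x|\le R$ and $k$ large. Hence $\epsilon_k^{\gamma'}\delta_kV(\cdot)$ is bounded on $B_R$, in fact it tends to zero since $\epsilon_k^{\gamma'-n}\sigma_k^{-1}=\sigma_k^{\frac{\alpha(n-\gamma')}{\gamma'}-1}\to0$ by $\alpha<\alpha^*$. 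If $\gamma'>n$ the bound is immediate from Lemma~\ref{lemma21-crucial}, so only $\gamma'\le n$ requires work.

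\textbf{Step 2 (local estimates and convergence).} Lemma~\ref{sect2-lemma21-gradientu} applied locally, or equivalently the Bernstein argument in its proof, gives $|\nabla\tilde u_k|\le C_R$ on $B_R$. Lemma~\ref{thmmaximalregularity} then gives $W^{2,p}$ bounds on $\tilde u_k$. From the Fokker--Planck equation with bounded drift, local elliptic estimates (Lemma~\ref{proposition-lemma21-FP} localised) give $\tilde m_k$ bounded in $W^{1,p}(B_R)$ for all $p$, hence in $C^{0,\theta}$. We pass to the limit locally uniformly: $\tilde m_k\to\tilde m$ with $\tilde m(0)\ge1/2$, $\tilde m$ continuous and nonnegative. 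This contradicts $\int\tilde m^{1+\alpha}_k\to0$, by Fatou's lemma on $B_1$.

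\textbf{Main obstacle.} The delicate step is Step 1, namely controlling the potential term near $x_k+y_k$ uniformly in $k$ using only the slow-growth conditions (V2) rather than polynomial bounds. I would handle it by first showing $\tilde m_k$ is locally H\"older with bounds independent of $V$'s size: use a cutoff and the uniform lower bound on $u_k$ from Lemma~\ref{lowerboundVkgenerallemma22}. I would then bootstrap, with the doubling condition \eqref{V2mainassumption_2} replacing the polynomial comparison of \cite{cirant2025critical}.
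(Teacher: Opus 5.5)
There is a genuine gap, and it lies exactly in the case your scaling cannot reach. Your blow-up at $\epsilon_k=\sigma_k^{-\alpha/\gamma'}$ works only if, at the maximum point, the potential is dominated by the coupling, i.e.\ $W_k:=\delta_kV(x_k+y_k)\lesssim\sigma_k^{\alpha}$. Step~1, which is supposed to guarantee this, is circular. To get $m_k\ge\sigma_k/4$ on $B_{c\epsilon_k}(x_k)$ you need a uniform H\"older modulus for $\tilde m_k$ at unit scale. That requires local control of the drift $\nabla H(\nabla\tilde u_k)$, which in turn requires boundedness of $\epsilon_k^{\gamma'}\delta_kV(\epsilon_k\cdot+x_k+y_k)$ near $0$ --- precisely what Step~1 is meant to prove.

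This is not a technicality. Suppose you break the loop honestly by rescaling at $b_k$ with $b_k^{\gamma'}=\min\{\sigma_k^{-\alpha},W_k^{-1}\}$, where both the coupling and the potential are bounded. In the regime $W_k\gg\sigma_k^{\alpha}$ you then only get $m_k\ge\sigma_k/4$ on a ball of radius $\sim W_k^{-1/\gamma'}$. This yields $\int m_k\gtrsim\sigma_kW_k^{-n/\gamma'}$, $\int m_k^{1+\alpha}\gtrsim\sigma_k^{1+\alpha}W_k^{-n/\gamma'}$ and $\int\delta_kV(\cdot+y_k)m_k\gtrsim\sigma_kW_k^{1-n/\gamma'}$. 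For $n>\gamma'$ all three stay bounded once $W_k$ is large, so \eqref{assumptionsincrucialemmanewest} gives no contradiction.

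The local limit does not help either. By (V2) the rescaled potential stays in $[C_1,C_2]$ on growing balls, and it tends to the constant $1$ if, e.g., $V=|x|^b$ and $|x_k+y_k|\to\infty$. Then an affine $v$ with $H(\nabla v)$ constant, together with a constant $\eta$, solves the limit system, so no purely local contradiction is available.

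Your proposed remedy is also circular. Lemma~\ref{lowerboundVkgenerallemma22} requires the hypotheses of Lemma~\ref{sect2-lemma21-gradientu}, including a uniform $L^\infty$ bound on $f_k=-\mu_kg_k[m_k]$, which is the conclusion you are after. (Minor: the $L^\infty$-scale blow-up also presupposes $\|m_k\|_{L^\infty}<\infty$ for each $k$, which is not among the hypotheses.)

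The paper's proof is organised around exactly this regime, in three steps.
\begin{enumerate}
\item On $D_R=\{\delta_kV(\cdot+y_k)\le R\}$ it obtains $\sup_k\|m_k\|_{L^\infty(D_{R/4})}<\infty$. This uses maximal regularity (Lemma~\ref{thmmaximalregularity}) with only $L^{1+1/\alpha}$ control of the coupling, which suffices because $(1+\frac1\alpha)\gamma'>n$.
\item It proves $\lim_{R\to\infty}\sup_k\|m_k^{\alpha}/(\delta_kV(\cdot+y_k))\|_{L^\infty(D_R^c)}=0$ by blowing up at the \emph{potential} scale $a_l^{\gamma'}=1/(\delta_{k_l}V(x_l+y_{k_l}))$, at points where this ratio is $\ge\varepsilon$. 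There the rescaled density is not a priori bounded and the coupling is controlled only in $L^{1+1/\alpha}$, where it is small because $\alpha<\alpha^*$. H\"older continuity again comes from maximal regularity rather than Bernstein, and the contradiction comes from the $V$-moment.
\item A global Lyapunov-function argument (as in Lemma~5.2 of \cite{cirant2025critical}) uses the fact that outside $D_R$ the potential dominates the coupling in the Hamilton--Jacobi equation. This gives uniform $W^{1,q}$ bounds with $q>n$, and hence \eqref{uniformlyboundmkinlinfty}.
\end{enumerate}
The first two steps already force any near-maximum point of an unbounded $m_k$ into the regime $\sigma_k^{\alpha}=o(W_k)$. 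So your argument covers only what the paper disposes of cheaply; the missing ingredient is the third, global step.
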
   }

\begin{proof}
The proof is based on a modification of the argument for Lemma 5.2 in \cite{cirant2025critical}. For completeness, we provide a brief proof. To obtain local uniform estimates of $m_k$	
 , we observe from \eqref{assumptionsincrucialemmanewest} and \eqref{gkm-newest-estimate-crucial2024207} that
\begin{align*}
\sup_{k}\|g_k[m]\|_{L^{1+\frac{1}{\alpha}}(\mathbb R^n)}\leq K\bigg(\|m^{\alpha}\|_{L^{1+\frac{1}{\alpha}}(\mathbb R^n)}+1\bigg) <\infty.
\end{align*}
Then, we use the local H\"{o}lder continuity of $V$ and define
$$D_R:=\left\{x:|\delta_kV(x+y_k)|\leq R\right\},\forall R>0,$$ then apply the maximal regularities shown in Lemma \ref{thmmaximalregularity} to obtain
$
\sup_k\Vert |\nabla u_k|^{\gamma}\Vert_{L^{1+\frac{1}{\alpha }}(D_{{R}/{2}})}<\infty,
$
which implies
\begin{align*}
\sup_{k}\Vert |\nabla u_k|^{\gamma-1}\Vert_{L^{{(1+\frac{1}{\alpha})\gamma'}}(D_{R/2})}<\infty\text{~with~}\bigg(1+\frac{1}{\alpha}\bigg)\gamma'>n.
\end{align*}
Focusing on Fokker-Planck equations, we have from Theorem 1.6.5 in \cite{bogachev2022fokker} that 
\begin{align*}
\sup_{k}\Vert m_k\Vert_{L^\infty(D_{R/4})}<\infty.
\end{align*}

Next, we claim that 
\begin{align}\label{crucialandnewclaim}
\lim_{R\rightarrow +\infty}\sup_{k}\bigg\Vert \frac{m_k^{\alpha}(\cdot)}{\delta_k V(\cdot+y_k)}\bigg\Vert_{L^\infty(D^c_R)}=0.
\end{align}
To show (\ref{crucialandnewclaim}), we argue by contradiction and assume there exist $\varepsilon>0,$ $x_l$ and $k_l\rightarrow +\infty$ such that
\begin{align}\label{411contradictionkeystuff}
\frac{m_{k_l}^{\alpha}(x_l)}{\delta_{k_l}V(x_l+y_{k_l})}\geq \varepsilon,
\end{align}
where $|\delta_{k_l}V(x_l+y_{k_l})|\rightarrow+\infty.$  Then we define
\begin{align}\label{rescaling2024206}
v_l(x)=a_l^{\gamma'-2}u_{k_l}(x_l+a_lx),~~\eta_l(x)=a_l^{\frac{\gamma'}{\alpha}} m_{k_l}(x_l+a_lx),
\end{align}
where $a_l^{\gamma'}=\frac{1}{\delta_{k_l}V(x_l+y_{k_l})}\rightarrow 0$.  Upon substituting (\ref{rescaling2024206}) into (\ref{eq-potential-newest}), one obtains 
\begin{align}\label{413rescale2024206}
\left\{\begin{array}{ll}
-\Delta v_l+H(\nabla v_l)=a_l^{\gamma'}C+a_l^{\gamma'}{\delta_{k_l}}V(x_l+y_{k_l}+a_lx)-a^{\gamma'}_l\mu_{k_l}g_l[ a_l^{-\frac{\gamma'}{\alpha}}\eta_l], &x\in\mathbb R^n,\\
\Delta \eta_l+\nabla\cdot(\eta_l \nabla H(\nabla v_l))=0,&x\in\mathbb R^n.
\end{array}
\right.
\end{align}
 In light of the assumption (\ref{V2mainassumption_2}), we have 
\begin{align}\label{mildassumptiononVinlemmacritical}
\Vert a_l^{\gamma'}{\delta_{k_l}}V(x_l+a_l x+y_{k_l})\Vert_{L^\infty(B_1(0))}\leq C_2.
\end{align}
Combining (\ref{gkm-newest-estimate-crucial2024207}) with (\ref{gkm-newest-estimate-crucial202420711}), one obtains for $l$ large,
\begin{align}\label{418combinecrucialnewest1}
\Vert a^{\gamma'}_l g_l[\eta_l a_l^{-\frac{\gamma'}{\alpha}}]\Vert_{L^{{1+\frac{1}{\alpha}}}(B_1(0))}= &{a^{\gamma'}_l}\Vert g_l[\eta_l a_l^{-\frac{\gamma'}{\alpha}}]\Vert_{L^{{1+\frac{1}{\alpha}}}(B_1(0))}\nonumber\\
\leq & a^{\gamma'}_lK(\Vert \eta_l^{\alpha} a_l^{-\gamma'}\Vert_{L^{1+\frac{1}{\alpha}}(B_2(0))}+1)\leq K\left( \Vert \eta_l^{\alpha} \Vert_{L^{1+\frac{1}{\alpha}}(B_2(0))}+1\right).
\end{align}
On the other hand, one has from \eqref{assumptionsincrucialemmanewest} and $a_l\rightarrow 0$ that
\begin{align}\label{419combinecrucialnewest2}
\mu_{k_l}^{1+\frac{1}{\alpha}}\Vert \eta_l^{\alpha}\Vert^{1+\frac{1}{\alpha}}_{L^{1+\frac{1}{\alpha}}(B_2(0))}=\mu_{k_l}^{1+\frac{1}{\alpha}}a_l^{\gamma'+\frac{\gamma'}{\alpha}-n}\Vert m_{k_l}\Vert^{1+\alpha}_{L^{1+\alpha }(B_{2a_l}(x_l))}\rightarrow 0\text{~as~}l\rightarrow +\infty,
\end{align}
where we used $\frac{\gamma'}{n}<\alpha<\frac{\gamma'}{n-\gamma'}$ if $\gamma'<n$ and $\frac{\gamma'}{n}<\alpha<+\infty$ if $\gamma'\geq n.$
Then we combine \eqref{mildassumptiononVinlemmacritical}, (\ref{418combinecrucialnewest1}) with (\ref{419combinecrucialnewest2}) and similarly use the maximal regularities shown in \cite{cirant2021problem} to get 
\begin{align*}
\Vert |\nabla v_l|^{\gamma}\Vert_{L^{1+\frac{1}{\alpha}}(B_{{1}/{2}}(0))}\leq C, \text{~for~}l~\text{large},
\end{align*}
where constant $C>0$ is independent of $l.$  Then focusing on the second equation of (\ref{413rescale2024206}), we similarly apply the standard elliptic regularity estimates (See Theorem 1.6.5 in \cite{bogachev2022fokker}) to obtain $\eta_l\in C^{0,\theta}(B_{1/4}(0))$ with $\theta\in(0,1)$.  By using the local H\"{o}lder's continuity of $\eta_l$, we have from (\ref{411contradictionkeystuff}) that 
\begin{align*}
\eta^{\alpha}_l(0)=m_{k_l}^{\alpha}(x_l)a_l^{\gamma'}=\frac{m_{k_l}^{\alpha}(x_l)}{\delta_{k_l}V(x_l+y_{k_l})}\geq \varepsilon,
\end{align*}
which implies there exist $\bar\xi, R>0$ such that 
$\eta_l(x)\geq\bar \xi \text{ for }x\in B_R(0)$. Then we find
\begin{align*}
\int_{\mathbb R^n}m_{k_l} \delta_{k_l}V(x+y_{k_l})\,dx&\geq \bar \xi a_l^{-n}\delta_{k_l}\int_{B_{a_lR}(0)}V(x_l+y+y_{k_l})\,dy
\geq \frac{\bar\xi}{2}a_l^{-n}\delta_{k_l}V(x_l+y_{k_l})|B_{a_lR}(0)|\\
&\geq C\bar \xi\delta_{k_l}V(x_l+y_{k_l})\rightarrow +\infty,
\end{align*}
  which is contradicted to (\ref{assumptionsincrucialemmanewest}) then proves claim (\ref{crucialandnewclaim}).

  With (\ref{crucialandnewclaim}), we next construct a Lyapunov function and use it to establish uniform bounds for $\|m_k\|_{W^{1,q}}$ with $q>n$. By the Sobolev embedding theorem, this immediately yields the desired conclusion \eqref{uniformlyboundmkinlinfty}. The  argument follows similarly to Lemma 5.2 in \cite{cirant2025critical}, and is therefore omitted.

\end{proof}

With the compactness and regularity results established in Lemma \ref{lem4.1} and Lemma \ref{blowupanalysismlinfboundcritical}, we now turn to the minimization problem \eqref{delta_problem_tilde}. The following result concerns the existence of a minimizer for \eqref{delta_problem_tilde}:

\begin{proposition}\label{exi_min_original}
	For every $\delta\in(0,1)$ and $\alpha\in(0,\alpha^{*})$, there exists constant $C>0$ independent of $k$ and $\delta$  such that $0\leq\tilde e_{\delta,k}\leq C$ and $\tilde e_{\delta,k}$ has a minimizer $(\tilde{m}_{\delta,k},\tilde{w}_{\delta,k})\in \mathcal{A}\cap \left\{m: \Vert m\ast \eta_{k}\Vert_{L^{1+\alpha}(\mathbb R^n)}=1 \right\}$, 
	where $\tilde{e}_{\delta, k}$ is defined in \eqref{delta_problem_tilde}.  In addition,
	\begin{align}\label{min_delta_pro_2}
		\tilde{m}_{\delta,k}(1+V)\in L^{1}(\mathbb{R}^{n}),\,\, \tilde{w}_{\delta,k}(1+V)^{\frac{1}{\gamma}}\in L^{1}(\mathbb{R}^{n}).
	\end{align} 
\end{proposition}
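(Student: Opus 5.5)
We argue by the direct method of the calculus of variations, following the regularization scheme of \cite{cesaroni2018concentration}. The bound $\tilde e_{\delta,k}\geq 0$ is immediate, since every term of $\mathcal E_\delta$ is nonnegative by \eqref{MFG-L} and $V>0$ (after adding a constant to $V$ on the compact set where (V2) does not apply, if necessary). For the upper bound we build an explicit competitor that does not depend on $k$ or $\delta$. Fix a smooth positive bump $\phi\in C_c^\infty(B_1(0))$, put $m=\phi$ and $w=\nabla\phi$, so that the Fokker--Planck constraint in \eqref{constraint-set-A} holds trivially. Then $\int \phi L(-\nabla\phi/\phi)\,dx\leq C'_L\int |\nabla\phi|^{\gamma'}\phi^{1-\gamma'}\,dx<\infty$ provided $\phi$ is chosen as, e.g., $\phi=\psi^{N}$ with $N$ large and $\psi$ a standard bump. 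Next rescale in amplitude, $m=t\phi$, $w=t\nabla\phi$, so that $\|t\phi*\eta_k\|_{L^{1+\alpha}}=1$. Because $\|\phi*\eta_k\|_{L^{1+\alpha}}\to\|\phi\|_{L^{1+\alpha}}$, the factor $t=t_k$ is bounded uniformly in $k$. Since $\delta<1$ and $V$ is bounded on $B_2$, this gives $\tilde e_{\delta,k}\leq C$ with $C$ independent of $k$ and $\delta$.

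For existence, take a minimizing sequence $(m_j,w_j)$. The energy bound yields uniform bounds on $\int m_j L(-w_j/m_j)$, which by \eqref{MFG-L} controls $\Lambda_{\gamma'}$, and also on $\int (1+\delta V)m_j$. Lemma \ref{lemma21-crucial} then bounds $\|m_j\|_{W^{1,\hat q}}$ and $\|w_j\|_{L^1\cap L^{\hat q}}$. Applying Lemma \ref{lem4.1} with potential $\delta V$, we extract a subsequence with $m_j\to m$ strongly in $L^1$ and in $L^{q}$ for every $q<\hat q^*$, and $w_j\rightharpoonup w$ weakly in $L^{\hat q}$ (and as measures). The weak Fokker--Planck relation passes to the limit directly. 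Lower semicontinuity of $(m,w)\mapsto\int mL(-w/m)$ follows from its joint convexity and the fact that it is a perspective function (standard, as in \cite{cesaroni2018concentration}), and Fatou's lemma handles $\int \delta V m$.

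The main obstacle is preserving the constraint $\|m*\eta_k\|_{L^{1+\alpha}}=1$ in the limit. This is exactly what the mollification is for. Since $\|\eta_k\|_{L^\infty}\leq C_k$, we have $\|(m_j-m)*\eta_k\|_{L^\infty}\leq C_k\|m_j-m\|_{L^1}\to0$ and $\|(m_j-m)*\eta_k\|_{L^1}\to 0$, so interpolation gives $m_j*\eta_k\to m*\eta_k$ in $L^{1+\alpha}$ for every $\alpha>0$. In particular this works even when $1+\alpha\geq \hat q^*$ is not within reach of Lemma \ref{lem4.1}, which is why the strong $L^1$ compactness coming from the coercive potential is essential here. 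It also gives $m\not\equiv0$, so $(m,w)\in\mathcal A$ and $(m,w)$ is a minimizer.

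Finally we check \eqref{min_delta_pro_2}. The first part, $\tilde m_{\delta,k}(1+V)\in L^1$, follows from the finiteness of the energy, since $\delta>0$. For the second we use Young's inequality,
\[
|w|(1+V)^{\frac1\gamma}\leq C\Big(m\Big|\frac wm\Big|^{\gamma'}+m(1+V)\Big),
\]
whose right-hand side is integrable by \eqref{MFG-L} and the first part.
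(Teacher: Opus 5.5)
Your proof is correct and follows the paper's route. The paper also uses the direct method, with Lemma \ref{lemma21-crucial} for the $W^{1,\hat q}\times L^{\hat q}$ bounds, Lemma \ref{lem4.1} (driven by the coercivity of $\delta V$) for compactness, lower semicontinuity plus Fatou for the energy, and H\"older/Young for \eqref{min_delta_pro_2}. The differences are cosmetic: you use the compactly supported competitor $t_k\psi^N$ where the paper uses $c_k e^{-|x|}$, and you pass the constraint to the limit through $\|\eta_k\|_{L^\infty}$ and $L^1$ convergence where the paper uses strong $L^{1+\alpha}$ convergence.

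Your aside about the case $1+\alpha\geq\hat q^*$ is moot. For $\gamma'<n$ one has $\hat q^*=\frac{n}{n-\gamma'}=1+\alpha^*$, so $1+\alpha<\hat q^*$ throughout $\alpha<\alpha^*$; for $\gamma'\geq n$, $\hat q^*$ is infinite or can be taken arbitrarily large. Also, adding a constant to $V$ is not a harmless normalization: it shifts $\mathcal E_\delta$ by $\delta c\int m\,dx$, which is not fixed on this constraint set. Nonnegativity should instead rest on $\delta V+1\geq 0$, as the paper tacitly assumes.
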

\begin{proof}
	First, we deduce from the definition of $\mathcal{E}_{\delta}$ that $\tilde{e}_{\delta, k}\geq 0$. Now, let $\tilde{m}=c_{k}e^{-|x|}$ and $\tilde{w}=\nabla \tilde{m}$, where $c_{k}$ is chosen such that $(\tilde{m},\tilde{w})\in \mathcal{A}\cap \left\{m: \Vert m\ast \eta_{k}\Vert_{L^{1+\alpha}(\mathbb R^n)}=1\right\}$. One can check that $c_{k}$ is uniformly bounded in $k$. Hence, we obtain $0\leq \tilde{e}_{\delta,k}\leq C$ for some $C>0$ independent of $k>0$ and $\delta\in(0,1)$.
    Now, let $(m_l,w_l)\in \mathcal{A}\cap \left\{m: \Vert m\ast \eta_{k}\Vert_{L^{1+\alpha}(\mathbb R^n)}=1 \right\}$ be a minimizing sequence of $\tilde{e}_{\delta,k}$, namely, $\mathcal{E}_{0}(m_l,w_l)\to \tilde{e}_{\delta, k}$ as $l\to\infty$. Then, one gets 
	\begin{align}\label{energy_est}
		C_{L}\int_{\mathbb R^n} m_{l} L\left(-\frac{w_{l}}{m_{l}}\right) \,dx +\int_{\mathbb R^n} \left(\delta V(x) +1\right)m_{l}\,dx\leq \tilde{e}_{\delta, k}+1\leq C+1.
	\end{align}
	 Therefore, by Lemma \ref{lemma21-crucial} and (\ref{MFG-L}), we obtain that 
	\begin{align*}
		\|m_l\|_{W^{1,\hat{q}}(\mathbb{R}^n)},\,\, \|w_l\|_{L^{\hat{q}}(\mathbb{R}^n)}\leq C, 
	\end{align*}
	for some $\hat q$, $C>0$ independent of $k$.
	As a consequence, there exists $(\tilde{m}_{\delta,k},\tilde{w}_{\delta,k})\in W^{1,\hat{q}}(\mathbb{R}^n)\times L^{\hat{q}}(\mathbb{R}^n)$ such that 
	\begin{align*}
		(m_l,w_l) \rightharpoonup (\tilde{m}_{\delta,k},\tilde{w}_{\delta,k})\,\, \text{weakly in}\,\, W^{1,\hat{q}}(\mathbb{R}^n)\times L^{\hat{q}}(\mathbb{R}^n)\,\, \text{as}\,\,l\to+\infty. 
	\end{align*}
 Applying the Sobolev compact embedding results stated in Lemma \ref{lem4.1}, we obtain that $m_l\to \tilde{m}_{\delta,k}$ strongly in $L^{1}(\mathbb{R}^n)\cap L^{1+\alpha}(\mathbb{R}^n)$, and thus $\int_{\mathbb{R}^n}(\tilde{m}_{\delta,k}\ast\eta_{k})^{1+\alpha}\,dx=1$. As a result,  $(\tilde{m}_{\delta,k},\tilde{w}_{\delta,k})\in\mathcal{A}\cap \left\{m: \Vert m\ast \eta_{k}\Vert_{L^{1+\alpha}(\mathbb R^n)}=1 \right\}$.  Then we conclude that $(\tilde{m}_{\delta,k},\tilde{w}_{\delta,k})$ is a minimizer of $\tilde{e}_{\delta, k}$  by the weak lower semicontinuity of the functional $\mathcal{E}_{\delta}(m,w)$. Finally, by using \eqref{energy_est} and Fatou's lemma, we deduce that 
	\begin{align}\label{min_est_2}
		\int_{\mathbb{R}^n}\tilde{m}_{\delta,k}L\left(-\frac{\tilde{w}_{\delta,k}}{\tilde{m}_{\delta,k}}\right)\,dx\leq C \,\,\text{and}\,\,\int_{\mathbb R^n} (V(x) +1)\tilde{m}_{\delta,k}\,dx\leq C_{\delta},
	\end{align}
    where $C>0$ is independent of $k>0$ and $\delta\in (0,1)$, and $C_{\delta}>0$ depends only on $\delta\in(0,1)$.
	Therefore, we get
\begin{equation*}
\int_{\mathbb{R}^n}|\tilde{w}_{\delta,k}|\,dx\leq \int_{\mathbb{R}^n}|\tilde{w}_{\delta,k}|\left(V(x)+1\right)^{\frac{1}{\gamma}}\,dx\leq  \left(\int_{\mathbb{R}^n}\tilde{m}_{\delta,k}\left|\frac{\tilde{w}_{\delta,k}}{\tilde{m}_{\delta,k}}\right|^{\gamma'}\,dx\right)^{\frac{1}{\gamma'}} \left(\int_{\mathbb R^n}\tilde{m}_{\delta,k} \left(V(x)+1\right)\,dx\right)^{\frac{1}{\gamma}}\leq C_{\delta},
\end{equation*}
	which, combining with \eqref{MFG-L}, indicates that $\tilde{w}_{\delta,k}\in L^{1}(\mathbb{R}^n)$ and \eqref{min_delta_pro_2} holds.  
  This completes the proof of this lemma.
\end{proof}

In Proposition \ref{exi_min_original}, we proved the existence of a minimizer $(\tilde{m}_{\delta,k},\tilde{w}_{\delta,k})$ of $\tilde e_{\delta,k}$. However, due to the presence of nonlinear constraints in $\mathcal{A}\cap \left\{m: \Vert m\ast \eta_{k}\Vert_{L^{1+\alpha}(\mathbb R^n)}=1 \right\}$, we cannot obtain solutions of the auxiliary MFG system \eqref{MFG_mass_supercritical} by utilizing the minimizer $(\tilde{m}_{\delta,k},\tilde{w}_{\delta,k})$.  The key difficulty here lies in exchanging the supremum and infimum in the augmented Lagrangian functional, as discussed in \cite{cesaroni2018concentration}. To overcome this issue, we consider the minimization problem with linearized constraints (first-stage linearization), as follows
\begin{align}\label{delta_problem_bar}
	\bar{e}_{\delta, k}:=\inf_{(m,w)\in \mathcal{A}\cap \left\{m: \left\Vert \left[\left(\tilde{m}_{\delta,k}\ast\eta_{k}\right)^{\alpha}\ast\eta_{k}\right]m\right\Vert_{L^{1}(\mathbb R^n)}=1\right\}}\mathcal E_{\delta}(m,w),
\end{align}
where $\mathcal{E}_{\delta}$ is defined in \eqref{delta_energy_functional}
and $\tilde{m}_{\delta,k}$ is obtained in Proposition \ref{exi_min_original}. The following result concerns the attainability of $\bar{e}_{\delta, k}$ and the relationship between the minimizers of $\tilde{e}_{\delta, k}$ and $\bar{e}_{\delta, k}$.
\begin{proposition}\label{exi_min_appro_re}
	For any $\delta\in(0,1)$ and  $\alpha\in(0,\alpha^{*})$, minimization problem \eqref{delta_problem_bar} is attained by some $(\bar{m}_{\delta,k},\bar{w}_{\delta,k})\in \mathcal{A}\cap \left\{m: \left\Vert \left[\left(\tilde{m}_{\delta,k}\ast\eta_{k}\right)^{\alpha}\ast\eta_{k}\right]m\right\Vert_{L^{1}(\mathbb R^n)}=1 \right\}$.
	Moreover, the minimizers satisfy
	\begin{align}\label{min_delta_pro_4}
		\bar{m}_{\delta,k}(1+V)\in L^{1}(\mathbb{R}^{n}),\,\, \bar{w}_{\delta,k}(1+V)^{\frac{1}{\gamma}}\in L^{1}(\mathbb{R}^{n})\,\,\text{and}\,\,\bar{m}_{\delta,k}\ast\eta_{k}(x)=\tilde{m}_{\delta,k}\ast\eta_{k}(x)\,\,\text{a.e. in}\,\,\mathbb{R}^{n},
	\end{align} 
where $\tilde{m}_{\delta,k}$ is obtained in Proposition \ref{exi_min_original}. Furthermore,  $(\bar{m}_{\delta,k},\bar{w}_{\delta,k}) \in  \mathcal{A}\cap \big\{m: \Vert m\ast\eta_{k}\Vert_{L^{1+\alpha}(\mathbb R^n)}=1 \big\}$  is the minimizer of $\tilde{e}_{\delta, k}=\bar{e}_{\delta, k}$. 
\end{proposition}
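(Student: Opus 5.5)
Set $\rho_k:=\left(\tilde{m}_{\delta,k}\ast\eta_{k}\right)^{\alpha}\ast\eta_{k}$. Since $\tilde m_{\delta,k}\in L^1$ and $\eta_k$ is smooth, $\tilde m_{\delta,k}\ast\eta_k\in L^\infty$, so $\rho_k\in L^\infty$. It is also nonnegative and continuous. The key observation is
\[
\int_{\mathbb R^n}\rho_k\, m\,dx=\int_{\mathbb R^n}(\tilde m_{\delta,k}\ast\eta_k)^{\alpha}\,(m\ast\eta_k)\,dx ,
\]
by Fubini, with $\eta_k$ even. In particular $\int\rho_k\tilde m_{\delta,k}\,dx=\|\tilde m_{\delta,k}\ast\eta_k\|_{L^{1+\alpha}}^{1+\alpha}=1$. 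Hence $(\tilde m_{\delta,k},\tilde w_{\delta,k})$ is admissible for $\bar e_{\delta,k}$, and $\bar e_{\delta,k}\le \tilde e_{\delta,k}$.

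\textbf{Existence.} Take a minimizing sequence for $\bar e_{\delta,k}$ and argue exactly as in Proposition \ref{exi_min_original}. The energy bound and Lemma \ref{lemma21-crucial} give bounds in $W^{1,\hat q}\times L^{\hat q}$ and on $\int(1+\delta V)m_l\,dx$. Lemma \ref{lem4.1} then gives strong convergence $m_l\to\bar m_{\delta,k}$ in $L^1$. Because $\rho_k\in L^\infty$, the linear constraint $\int\rho_k m\,dx=1$ passes to the limit. Weak lower semicontinuity of $\mathcal E_\delta$ (the term $mL(-w/m)$ is jointly convex) gives a minimizer. The integrability properties in \eqref{min_delta_pro_4} follow from Fatou's lemma and Hölder's inequality, as in \eqref{min_est_2}.

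\textbf{Reverse inequality and identification.} Let $(m,w)$ be any competitor for $\bar e_{\delta,k}$. By Hölder's inequality,
\[
1=\int(\tilde m_{\delta,k}\ast\eta_k)^{\alpha}(m\ast\eta_k)\le \|m\ast\eta_k\|_{L^{1+\alpha}} ,
\]
so $t:=\|m\ast\eta_k\|_{L^{1+\alpha}}\ge1$. Then $(m/t,w/t)$ is admissible for $\tilde e_{\delta,k}$, because $\mathcal A$ is a cone and the Fokker--Planck relation is linear. Both the kinetic term (1-homogeneous under $(m,w)\mapsto(sm,sw)$) and the potential term are linear in this scaling, so $\mathcal E_\delta(m/t,w/t)=t^{-1}\mathcal E_\delta(m,w)$. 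Therefore
\[
\tilde e_{\delta,k}\le t^{-1}\mathcal E_\delta(m,w)\le \mathcal E_\delta(m,w),
\]
using $\mathcal E_\delta\ge0$. Taking the infimum gives $\tilde e_{\delta,k}\le\bar e_{\delta,k}$, hence equality.

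Apply this to $(m,w)=(\bar m_{\delta,k},\bar w_{\delta,k})$. We get $\tilde e_{\delta,k}\le t^{-1}\bar e_{\delta,k}= t^{-1}\tilde e_{\delta,k}$. Since $\tilde e_{\delta,k}>0$, this forces $t=1$. The case $\tilde e_{\delta,k}=0$ is excluded because $\int m\,dx>0$ is part of the energy. So $\bar m_{\delta,k}$ lies in the $\tilde e$-constraint set, and $(\bar m_{\delta,k},\bar w_{\delta,k})$ is a minimizer of $\tilde e_{\delta,k}$.

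\textbf{The step I expect to be the main obstacle: $\bar m_{\delta,k}\ast\eta_k=\tilde m_{\delta,k}\ast\eta_k$ a.e.} With $t=1$, Hölder's inequality holds with equality:
\[
\int(\tilde m_{\delta,k}\ast\eta_k)^{\alpha}(\bar m_{\delta,k}\ast\eta_k)=1=\|\tilde m_{\delta,k}\ast\eta_k\|_{L^{1+\alpha}}^{\alpha}\,\|\bar m_{\delta,k}\ast\eta_k\|_{L^{1+\alpha}} .
\]
Equality in Hölder's inequality for the conjugate pair $\left(\tfrac{1+\alpha}{\alpha},\,1+\alpha\right)$, with nonnegative functions, forces $(\bar m_{\delta,k}\ast\eta_k)^{1+\alpha}=c\,(\tilde m_{\delta,k}\ast\eta_k)^{1+\alpha}$ a.e. for some constant $c$. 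Both functions have unit $L^{1+\alpha}$ norm, so $c=1$, which yields the last claim of \eqref{min_delta_pro_4}. The care needed here is to invoke the equality case correctly: it requires nonnegativity, which holds, and nontriviality, which is guaranteed by the normalization.
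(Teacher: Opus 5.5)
Your proof is correct and follows essentially the same route as the paper. The ingredients are the same: existence by the direct method of Proposition~\ref{exi_min_original}; rescaling by $t=\|m\ast\eta_k\|_{L^{1+\alpha}(\mathbb R^n)}$ to compare $\bar e_{\delta,k}$ with $\tilde e_{\delta,k}$; and the equality case of H\"older's inequality, which forces $t=1$ and $\bar m_{\delta,k}\ast\eta_k=\tilde m_{\delta,k}\ast\eta_k$ a.e. The differences are minor. You derive $t\ge 1$ for every competitor first, which gives $\tilde e_{\delta,k}\le\bar e_{\delta,k}$ before using the minimizer. You also make explicit the positivity $\tilde e_{\delta,k}>0$, which the paper uses tacitly when concluding $\|\bar m_{\delta,k}\ast\eta_k\|_{L^{1+\alpha}}\le 1$. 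That positivity should be justified by attainment, $\tilde e_{\delta,k}=\mathcal E_\delta(\tilde m_{\delta,k},\tilde w_{\delta,k})\ge\int_{\mathbb R^n}\tilde m_{\delta,k}\,dx>0$, not merely by $\int m\,dx>0$ for each competitor.
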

\begin{proof}
  The proof of the existence of the minimizer for $\bar e_{\delta, k}$
 is similar to that of Proposition \ref{exi_min_original}, and we therefore omit it. Recall that  $(\tilde{m}_{\delta,k},\tilde{w}_{\delta,k}) \in \mathcal{A}\cap \left\{m: \Vert m\ast \eta_{k}\Vert_{L^{1+\alpha}(\mathbb R^n)}=1 \right\}$ given in Proposition \ref{exi_min_original} is a minimizer of $\tilde{e}_{\delta,k}$. Then we have $$(\tilde{m}_{\delta,k},\tilde{w}_{\delta,k})\in \mathcal{A}\cap \left\{m: \left\Vert \left[\left(\tilde{m}_{\delta,k}\ast\eta_{k}\right)^{\alpha}\ast\eta_{k}\right]m\right\Vert_{L^{1}(\mathbb R^n)}=1 \right\},$$
and thus
\begin{equation}\label{rela_energy}
	\bar{e}_{\delta, k}=\inf_{(m,w)\in \mathcal{A}\cap \left\{m: \left\Vert \left[\left(\tilde{m}_{\delta,k}\ast\eta_{k}\right)^{\alpha}\ast\eta_{k}\right]m\right\Vert_{L^{1}(\mathbb R^n)}=1 \right\}}\mathcal E_{\delta}(m,w)\leq \mathcal E_{\delta}(\tilde{m}_{\delta,k},\tilde{w}_{\delta,k})=\tilde{e}_{\delta, k}.
\end{equation}
On the other hand, let $(\bar{m}_{\delta,k},\bar{w}_{\delta,k})\in  \mathcal{A}\cap \left\{m: \left\Vert \left[\left(\tilde{m}_{\delta,k}\ast\eta_{k}\right)^{\alpha}\ast\eta_{k}\right]m\right\Vert_{L^{1}(\mathbb R^n)}=1 \right\}$ be a minimizer of $\bar{e}_{\delta, k}$. Define
\[
\tilde{m}_{\delta,k}^{*}=\frac{\bar{m}_{\delta,k}}{\|\bar{m}_{\delta,k}\ast\eta_{k}\|_{L^{1+\alpha}(\mathbb{R}^n)}}\,\,\text{and}\,\, \tilde{w}_{\delta,k}^{*}=\frac{\bar{w}_{\delta,k}}{\|\bar{m}_{\delta,k}\ast\eta_{k}\|_{L^{1+\alpha}(\mathbb{R}^n)}}.
\]
It is easily seen that $(\tilde{m}_{\delta,k}^{*},\tilde{w}_{\delta,k}^{*})\in \mathcal{A}\cap \left\{m: \Vert m\ast \eta_{k}\Vert_{L^{1+\alpha}(\mathbb R^n)}=1 \right\}$, then  
\begin{equation}\label{rela_energy_1}
	\begin{aligned}
		\tilde{e}_{\delta, k}&=\inf_{(m,w)\in\mathcal{A}\cap \left\{m: \Vert m\ast \eta_{k}\Vert_{L^{1+\alpha}(\mathbb R^n)}=M \right\}}\mathcal E_{\delta}(m,w)\leq \mathcal E_{\delta}(\tilde{m}_{\delta,k}^{*},\tilde{w}_{\delta,k}^{*})\\
		&=\int_{\mathbb R^n}\tilde{m}_{\delta,k}^{*} L\left (-\frac{\tilde{w}_{\delta,k}^{*}}{\tilde{m}_{\delta,k}^{*}}\right )\,dx +\int_{\mathbb R^n} \left(\delta V(x) +1\right)\tilde{m}_{\delta,k}^{*}\,dx\\
		&=\frac{1}{\|\bar{m}_{\delta,k}\ast\eta_{k}\|_{L^{1+\alpha}(\mathbb{R}^n)}} \left\{ \int_{\mathbb R^n}\bar{m}_{\delta,k}L\left (-\frac{\bar{w}_{\delta,k}}{\bar{m}_{\delta,k}}\right )\,dx +\int_{\mathbb R^n}\left(\delta V(x) +1\right)\bar{m}_{\delta,k}\,dx\right\}\\
		&=\frac{1}{\|\bar{m}_{\delta,k}\ast\eta_{k}\|_{L^{1+\alpha}(\mathbb{R}^n)}} \bar{e}_{\delta, k}.	
	\end{aligned}
\end{equation}
Combining \eqref{rela_energy} with \eqref{rela_energy_1}, one finds  
\begin{equation}\label{rela_energy_2}
	\|\bar{m}_{\delta,k}\ast\eta_{k}\|_{L^{1+\alpha}(\mathbb{R}^n)}\leq 1.
\end{equation}
Moreover, we apply H\"older's inequality and \eqref{rela_energy_2} to get 
\begin{equation}\label{rela_energy_3}
	\begin{aligned}
		1&=\int_{\mathbb R^n}\left[\left(\tilde{m}_{\delta,k}\ast\eta_{k}\right)^{\alpha}\ast\eta_{k}\right]\bar{m}_{\delta,k}\,dx=\int_{\mathbb R^n}\left(\tilde{m}_{\delta,k}\ast\eta_{k}\right)^{\alpha}\left(\bar{m}_{\delta,k}\ast \eta_{k}\right)\,dx\\
	    &\leq \left( \int_{\mathbb R^n}\left(\tilde{m}_{\delta,k}\ast\eta_{k}\right)^{1+\alpha}\,dx \right)^{\frac{\alpha}{1+\alpha}} \left( \int_{\mathbb R^n}\left(\bar{m}_{\delta,k}\ast \eta_{k}\right)^{1+\alpha}\,dx \right)^{\frac{1}{1+\alpha}}\leq 1.
	\end{aligned}
\end{equation}
Therefore, ‘‘=’’ holds in \eqref{rela_energy_3}. As a consequence, we deduce that $\bar{m}_{\delta,k}\ast \eta_{k}= C\tilde{m}_{\delta,k}\ast \eta_{k}\,\,\text{a.e.\,in}\,\, \mathbb{R}^n$ for some $ C> 0$, which, together with the fact $ \int_{\mathbb R^n}(\tilde{m}_{\delta,k}\ast\eta_{k})^{1+\alpha}\,dx=1$ and \eqref{rela_energy_3}, implies that $ C=1$. Hence, we obtain that $\bar{m}_{\delta,k}\ast \eta_{k}=\tilde{m}_{\delta,k}\ast \eta_{k}\,\text{a.e.\,in}\,\,\mathbb{R}^n$. Finally, collect \eqref{rela_energy}, \eqref{rela_energy_1} and \eqref{rela_energy_3}, we conclude that $\tilde{e}_{\delta, k}=\bar{e}_{\delta, k}$,  which completes the proof. 
\end{proof}    

Proposition \ref{exi_min_appro_re} indicates that solving the minimization problem \eqref{delta_problem_tilde} is equivalent to solving the minimization problem \eqref{delta_problem_bar}. In other words, $\tilde{e}_{\delta, k}$ and $\bar {e}_{\delta, k}$ share the same minimizers and  $\tilde{e}_{\delta, k}=\bar {e}_{\delta, k}$.  Next, we directly analyze the minimizers of problem \eqref{delta_problem_bar} and show that there is  a solution to the corresponding auxiliary MFG system. Owing to the form of linearized constraint in \eqref{delta_problem_bar}, it is necessary to apply the linearization argument once again for finding the value function $u$ to exchange the minimum and maximum, which, whereas, leads to the appearance of an extra Lagrange multiplier $\lambda$.  Fortunately, by using the scaling argument, we can further show that $\lambda$ vanishes.  We refer to this procedure as the {\it{two-stage linearization}} argument, and by applying it, we obtain the following result. 
\begin{proposition}\label{classcal_sol_mfg}
	Let $(\bar m_{\delta,k},\bar w_{\delta,k})\in \mathcal{A}\cap \left\{m: \left\Vert \left[\left(\tilde{m}_{\delta,k}\ast\eta_{k}\right)^{\alpha}\ast\eta_{k}\right]m\right\Vert_{L^{1}(\mathbb R^n)}=1 \right\}$ be a minimizer of $\bar e_{\delta, k}$ defined in \eqref{delta_problem_bar}.  Then, there exists a solution $(\bar u_{\delta, k},\bar m_{\delta, k},\bar \mu_{\delta, k})$ to system
    \begin{equation}\label{MFG_mass_supercritical_linear}
         \begin{aligned}  
		\left\{\begin{array}{ll}
			-\Delta u_{\delta,k}+H(\nabla u_{\delta,k})+\mu \left(m_{\delta,k}\ast\eta_k\right)^{\alpha}\ast\eta_{k}=\delta V(x)+1,&x\in\mathbb R^n,\\
			\Delta m_{\delta,k}+\nabla\cdot (m_{\delta,k}\nabla H(\nabla u_{\delta,k}))=0,&x\in\mathbb R^n,\\
			w_{\delta,k}=-m_{\delta,k}\nabla H(\nabla u_{\delta,k}),\, \int_{\mathbb R^n}\left(m_{\delta,k}\ast\eta_{k}\right)^{1+\alpha}\,dx=1,
		\end{array}
		\right.
	\end{aligned}
\end{equation} 
where $\bar \mu_{\delta,k}=\bar e_{\delta,k}$ is uniformly bounded with respect to $k>0$ and $\delta\in(0,1)$.   Moreover, there exists $C_{\delta,k}>0$ such that 
\begin{equation}\label{estimate_gra}
	|\nabla \bar u_{\delta, k}(x)|\leq C_{\delta,k}(1+V(x))^{\frac{1}{\gamma}} \,\,\text{and}\,\, \bar u_{\delta, k}(x)\geq C_{\delta,k}(1+V(x))^{\frac{1}{\gamma}}-C_{\delta,k}^{-1}.
	\end{equation}
\end{proposition}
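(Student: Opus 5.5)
Write $g_k:=(\tilde m_{\delta,k}\ast\eta_k)^{\alpha}\ast\eta_k$. By Proposition \ref{exi_min_appro_re}, $g_k=(\bar m_{\delta,k}\ast\eta_k)^{\alpha}\ast\eta_k$. It is smooth, bounded and nonnegative, since $\tilde m_{\delta,k}\ast\eta_k\in L^{1+\alpha}$ and the outer mollification makes it $L^\infty$ and locally H\"older. The constraint in $\bar e_{\delta,k}$ is therefore \emph{linear} in $m$: $\int g_k m\,dx=1$. The plan is to treat $\bar e_{\delta,k}$ as a convex problem and dualize it.

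\textbf{Step 1: convex duality.} Dualize the linear constraints (Fokker--Planck and $\int g_k m=1$) with multipliers $u$ and $\mu$. This gives the concave dual problem
\[
\sup\Big\{\mu \ :\ (u,\mu)\in C^2\times\mathbb R,\ -\Delta u+H(\nabla u)+\mu g_k\le \delta V+1\Big\}.
\]
For a subsolution $u$, test the inequality against $m$ and use $mL(-w/m)\ge -w\cdot\nabla u-mH(\nabla u)$. This yields $\mathcal E_\delta(m,w)\ge \mu\int g_k m=\mu$, which is weak duality.

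For the reverse inequality, consider the ergodic-type problem
\[
-\Delta u+H(\nabla u)+\mu g_k=\delta V+1 .
\]
It is not of the form \eqref{HJB-regularity}, because $\mu$ multiplies the weight $g_k$ rather than a constant. I would pass to an auxiliary problem with two multipliers: for fixed $\mu$, solve
\[
-\Delta u+H(\nabla u)+\lambda=\delta V+1-\mu g_k
\]
using Lemma \ref{lemma22preliminary}(i),(iii), with $f_k=-\mu g_k$ bounded and locally H\"older. This produces a maximal $\bar\lambda(\mu)$ together with $u_\mu$ that is bounded below and satisfies the bounds \eqref{29uklemma22}. This is the second linearization, and it introduces the extra multiplier $\lambda$.

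Next, perform the min-max exchange for the Lagrangian
\[
\mathcal E_\delta(m,w)-\mu\Big(\int g_k m-1\Big)-\lambda\Big(\int m - \int \bar m_{\delta,k}\Big),
\]
following the convex-duality argument of \cite{cesaroni2018concentration}. The ingredients are convexity, lower semicontinuity, and the compactness given by Lemma \ref{lem4.1} together with coercivity in $V$. From this I obtain multipliers $(\bar\mu,\bar\lambda)$ and $\bar u$ such that $(\bar u,\bar m_{\delta,k},\bar w_{\delta,k})$ solves the system with an extra term $\bar\lambda$ on the left-hand side of the HJ equation. The complementary slackness identity gives $\bar w=-\bar m\nabla H(\nabla\bar u)$ via equality in Fenchel's inequality.

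\textbf{Step 2: killing $\bar\lambda$.} This is the main obstacle. I would use the scaling $(s\bar m,s\bar w)$ against the constraint. Since $\mathcal E_\delta$ is $1$-homogeneous along this scaling and the constraint is linear, optimality of $\bar m$ for $\bar e_{\delta,k}$ among all $m$ with $\int g_km=1$ gives
\[
\mathcal E_\delta(m,w)\ge \bar e_{\delta,k}\int g_k m \quad\text{for all } (m,w)\in\mathcal A .
\]
On the other hand, testing the HJ equation (with $\bar\lambda$) against $\bar m$ and using the Fokker--Planck equation with $\bar u$ gives
\[
\mathcal E_\delta(\bar m,\bar w)=\bar\mu\int g_k\bar m+\bar\lambda\int\bar m .
\]
Comparing this identity with the variational characterization of $\bar\lambda$ as a multiplier of $\int m$ forces $\bar\lambda\int\bar m=0$. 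Concretely, the derivative of the value function of the doubly constrained problem in the mass direction is zero, by homogeneity. Hence $\bar\lambda=0$ and $\bar\mu=\bar e_{\delta,k}$. Justifying these test-function integrations requires the growth bounds on $\bar u$ and $\nabla\bar u$ together with \eqref{min_delta_pro_4}; this is where I expect most of the technical effort.

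\textbf{Step 3: estimates.} The uniform bound on $\bar\mu_{\delta,k}$ follows from $0\le\bar e_{\delta,k}=\tilde e_{\delta,k}\le C$ (Propositions \ref{exi_min_original} and \ref{exi_min_appro_re}). The estimate \eqref{estimate_gra} follows from Lemma \ref{sect2-lemma21-gradientu} and Lemma \ref{lowerboundVkgenerallemma22}, applied with $f=-\bar\mu g_k$ (bounded in $L^\infty$ for fixed $k$) and $V_k=\delta V+1$. Property (V2) supplies the ratio condition, and the uniform lower bound on $\bar u$ comes from Lemma \ref{lemma22preliminary}(iii). Finally, $\int(\bar m\ast\eta_k)^{1+\alpha}=1$ holds by Proposition \ref{exi_min_appro_re}.
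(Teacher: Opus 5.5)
Your ingredients are the right ones, but Step 2 does not prove $\bar\lambda=0$.

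Suppose $(\bar\mu,\bar\lambda)$ comes out of a joint dualization of $\int g_k m=1$ and $\int m=r_0:=\int\bar m_{\delta,k}$. Testing the Hamilton--Jacobi equation against $\bar m_{\delta,k}$ then gives only $\bar e_{\delta,k}=\bar\mu+\bar\lambda r_0$. This is Euler's relation for the convex, positively $1$-homogeneous value function $F(a,b):=\inf\{\mathcal E_\delta(m,w):\int g_km=a,\ \int m=b\}$. It holds for \emph{every} multiplier pair, i.e.\ every subgradient of $F$ at $(1,r_0)$.

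Write $F(a,b)=a\,\phi(b/a)$. The subgradients are then the pairs $(\bar e_{\delta,k}-c\,r_0,\,c)$ with $c\in\partial\phi(r_0)$. Optimality of $\bar m_{\delta,k}$ (not homogeneity) says that $r_0$ minimizes $\phi$, so $0\in\partial\phi(r_0)$. Unless you prove that $\phi$ is differentiable at $r_0$, nothing stops the minimax step from producing $c\neq0$. The characterization of $\bar\lambda$ as a mass multiplier is consistent with every such pair, so it does not help.

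There is a second problem. A minimax theorem only equates values; it does not give you a maximizing $(\bar\mu,\bar\lambda)$ together with a $C^2$ solution $\bar u$. Lemma \ref{lemma22preliminary}(i) gives attainment only in $\lambda$, and only for a \emph{fixed} coefficient of $g_k$.

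The remedy, which is the paper's route, is to fix $\mu$ before introducing $\lambda$:
\begin{itemize}
\item Set $\bar\mu:=\bar e_{\delta,k}$. The paper identifies it as the value of the one-constraint dual $\sup\{\mu:-\Delta\psi+H(\nabla\psi)\le-\mu g_k+\delta V+1\text{ for some }\psi\in\mathcal B\}$.
\item Dualize only the mass constraint, for $\bar{\mathcal E}_\delta:=\mathcal E_\delta-\bar e_{\delta,k}\int g_km$ on $\{\int m=r_0\}$. Your inequality $\mathcal E_\delta(m,w)\ge\bar e_{\delta,k}\int g_km$ shows that this primal minimum is $0$, attained at $(\bar m_{\delta,k},\bar w_{\delta,k})$.
\item Strong duality then gives $\bar\lambda r_0=0$, where $\bar\lambda$ is the maximal ergodic constant for $f_k=-\bar e_{\delta,k}g_k$ and $V_k=\delta V+1$.
\item Lemma \ref{lemma22preliminary}(i) supplies a $C^2$ solution $\bar u$, bounded below, with $\lambda=\bar\lambda=0$.
\end{itemize}
You still need to check two things: that $\bar u$ lies in an admissible test class such as the paper's $\mathcal B$ (via Lemmas \ref{sect2-lemma21-gradientu} and \ref{lowerboundVkgenerallemma22}), and that the weak Fokker--Planck identity extends to that class. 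Once these hold, your Fenchel-equality argument and Step 3 go through as written.
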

\begin{proof}
The approach what we shall employ is based on Proposition 3.4 in \cite{cesaroni2018concentration}. More specifically, we define the space of test functions
\begin{equation*}
\mathcal{B}= \left\{\psi\in C^{2}(\mathbb{R}^n):\limsup_{|x|\to\infty}\frac{|\nabla \psi(x)|}{V^{\frac{1}{\gamma}}(x)} <\infty,\,\, \limsup_{|x|\to\infty}\frac{|\Delta \psi(x)|}{V(x)} <\infty \right\}.
\end{equation*}
Then, similar as in \cite[Proposition 3.4]{cesaroni2018concentration}, we conclude that
\begin{equation}\label{test_2}
	-\int_{\mathbb{R}^n}\bar m_{\delta,k} \Delta\psi\,dx=\int_{\mathbb{R}^n}\bar w_{\delta, k}\cdot\nabla\psi\,dx,\,\, \text{for all}\, \psi\in \mathcal{B}.
\end{equation}
In light of the integrability properties of $\bar m_{\delta, k}$ and $\bar w_{\delta, k}$ obtained in Lemma \ref{lemma21-crucial} and Proposition \ref{exi_min_appro_re} , we have the minimization problem \eqref{delta_problem_bar} is equivalent to minimizing $\mathcal E_{\delta}$ on $\mathcal{H}$, where $\mathcal{H}$ is given by

\begin{equation*}
	\begin{split}
		\mathcal{H} := \Bigg\{ 
		&(m,w) \in \big(L^{1+\alpha}(\mathbb{R}^{n}) \cap W^{1,\hat{q}}(\mathbb{R}^{n})\big) \times L^{\hat q}(\mathbb{R}^{n}): (m,w) \mbox{ satisfies }\eqref{min_delta_pro_4} \mbox{ and }\eqref{test_2},  m \geq, \not\equiv 0 \text{ a.e. in } \mathbb{R}^{n}, \\
		& \int_{\mathbb R^n}\left[\left(\tilde{m}_{\delta,k}\ast\eta_{k}\right)^{\alpha}\ast\eta_{k}\right]m\,dx=1, \text{ where } \tilde{m}_{\delta,k} \text{ is the minimizer of } \tilde{e}_{\delta, k} \text{ obtained in Proposition } \ref{exi_min_original} \Bigg\}\\
	= \Bigg\{&(m,w) \in \left(L^{1+\alpha}(\mathbb{R}^{n}) \cap W^{1,\hat{q}}(\mathbb{R}^{n})\right) \times L^{\hat q}(\mathbb{R}^{n}): (m,w) \text{ satisfies \eqref{min_delta_pro_4}  and \eqref{test_2}, } m \geq, \not\equiv 0 \text{ a.e. in } \mathbb{R}^{n}, \\
		& \int_{\mathbb R^n}\left[\left(\bar{m}_{\delta,k}\ast\eta_{k}\right)^{\alpha}\ast\eta_{k}\right]m\,dx=1, \text{ where } \bar{m}_{\delta,k} \text{ is the minimizer of } \bar{e}_{\delta, k} \text{ obtained in Proposition } \ref{exi_min_appro_re} \Bigg\},
	\end{split}
\end{equation*}
where we have used 
	$\tilde{m}_{\delta,k}\ast \eta_{k}(x)=\bar{m}_{\delta,k}\ast\eta_{k}(x)\,\,\text{a.e. in}\,\,\mathbb{R}^{n}$, which is shown in Proposition \ref{exi_min_appro_re}.
 
Now, define
\begin{equation}\label{sup_inf_exch_0}
		\bar\mu_{\delta,k}:=\sup\left\{\mu:-\Delta\psi+H(\nabla \psi)\leq -\mu \left(\bar{m}_{\delta,k}\ast\eta_{k}\right)^{\alpha}\ast\eta_{k}+\delta V(x)+1\text{ on }\mathbb{R}^n\,\text{for some}\,\psi\in\mathcal{B}\right\},
\end{equation}
then we intend to prove that 
\begin{equation}\label{sup_inf_exch}
	\bar\mu_{\delta,k}=\min_{(m,w)\in\mathcal{H}}\mathcal{E}_{\delta}(m,w)=\mathcal E_{\delta}(\bar m_{\delta,k},\bar w_{\delta,k})=\bar e_{\delta, k}.
\end{equation}
Indeed, let
 \begin{equation*}
 	\mathcal{L}(m,w,\mu,\psi):=\mathcal{E}_{\delta}(m,w)+\int_{\mathbb{R}^n}(m\Delta \psi+w\cdot\nabla \psi)\,dx-\mu\int_{\mathbb{R}^n}\left[\left(\bar{m}_{\delta,k}\ast\eta_{k}\right)^{\alpha}\ast\eta_{k}\right]m\,dx+\mu.
 \end{equation*}
Then, one has
\begin{equation}\label{sup_inf_exch_2}
\min_{(m,w)\in\mathcal{H}}\mathcal{E}_{\delta}(m,w)=	\inf_{(m,w)}\sup_{(\mu,\psi)\in \mathbb{R}\times\mathcal{B}}\mathcal{L}(m,w,\mu,\psi),
\end{equation}
where the infimum in the right-hand side is taken over all pairs 
\begin{equation*}
(m,w) \in \left(L^{1+\alpha}(\mathbb{R}^{n}) \cap W^{1,\hat{q}}(\mathbb{R}^{n})\right) \times L^{\hat q}(\mathbb{R}^{n})
\end{equation*}
 satisfying 
 \begin{equation}\label{eq-integrable}
 \tilde{m}(1+V)\in L^{1}(\mathbb{R}^{n}),\  {w}(1+V)^{\frac{1}{\gamma}}\in L^{1}(\mathbb{R}^{n}).\end{equation}
  Thus, the infimum in right-hand side of \eqref{sup_inf_exch_2} is a minimum.  One can also verify that $\mathcal{L}(\cdot,\cdot,\mu,\psi)$ is convex and weakly lower semicontinuous, and $\mathcal{L}(m,w,\cdot,\cdot)$ is linear. Therefore, by applying min-max theorem \cite[Theorem 2.3.7]{Borwein2010Vanderwerff}, we get
\begin{equation}\label{sup_inf_exch_4}
	\begin{aligned}
		&\min_{(m,w)}\sup_{(\mu,\psi)\in \mathbb{R}\times\mathcal{B}}\mathcal{L}(m,w,\mu,\psi)
		=\sup_{(\mu,\psi)\in \mathbb{R}\times\mathcal{B}}\min_{(m,w)}\mathcal{L}(m,w,\mu,\psi)\\
		=&\sup_{(\mu,\psi)\in \mathbb{R}\times\mathcal{B}}\min_{(m,w)}\int_{\mathbb R^n}\left[m L\left (-\frac{w}{m}\right ) + (\delta V(x) +1)m+m\Delta \psi+w\cdot\nabla \psi-\mu \left[\left(\bar{m}_{\delta,k}\ast\eta_{k}\right)^{\alpha}\ast\eta_{k}\right] m\right]\,dx+\mu\\
		=&\sup_{(\mu,\psi)\in \mathbb{R}\times\mathcal{B}}\int_{\mathbb R^n}\min_{(m,w)}\left[mL\left (-\frac{w}{m}\right ) + (\delta V(x) +1)m+m\Delta \psi+w\cdot\nabla \psi-\mu \left[\left(\bar{m}_{\delta,k}\ast\eta_{k}\right)^{\alpha}\ast\eta_{k}\right]m\right]\,dx+\mu\\
		=&\sup_{(\mu,\psi)\in \mathbb{R}\times\mathcal{B}}\int_{\mathbb R^n}\min_{(m,w)}\left[-H(\nabla \psi) + (\delta V(x)+1)+\Delta \psi-\mu \left[\left(\bar{m}_{\delta,k}\ast\eta_{k}\right)^{\alpha}\ast\eta_{k}\right]\right]m\,dx+\mu.
	\end{aligned}
\end{equation}
Note that, with
\[
\Phi := -H(\nabla \psi) + (\delta V(x) + 1) + \Delta \psi - \mu \left[\left(\bar{m}_{\delta,k}\ast\eta_{k}\right)^{\alpha}\ast\eta_{k}\right],
\]
we have
\begin{equation}\label{sup_inf_exch_5}
\min_{(m,w)} \Phi \cdot m = 
\begin{cases}
0, & \text{if } \Phi \geq 0, \\[4pt]
-\infty, & \text{if } \Phi < 0.
\end{cases}
\end{equation}
Then by using \eqref{sup_inf_exch_2}, \eqref{sup_inf_exch_4} and \eqref{sup_inf_exch_5}, one has \eqref{sup_inf_exch} holds. Moreover, from Proposition \ref{exi_min_original} and Proposition \ref{exi_min_appro_re},  we see that $\bar \mu_{\delta,k}=\bar e_{\delta, k}=\tilde e_{\delta, k} $. 

Next, for $\bar\mu_{\delta,k}\in \mathbb{R}$ defined by \eqref{sup_inf_exch_0}, we set 
\begin{equation}\label{new_constraint_tildeE}
\bar {\mathcal{E}}_{\delta}(m,w):=\mathcal{E}_{\delta}(m,w)-\bar\mu_{\delta,k}\int_{\mathbb R^n}\left[\left(\bar{m}_{\delta,k}\ast\eta_{k}\right)^{\alpha}\ast\eta_{k}\right]m\,dx,
\end{equation}
and show that
\begin{align}\label{reallyuseprop33dec}
\min_{(m,w)\in\bar{\mathcal  H}}\bar{ \mathcal E}_{\delta}(m,w)=\bar{\mathcal E}_{\delta}(\bar m_{\delta,k},\bar w_{\delta,k})=0,
\end{align}
where $\bar{\mathcal{H}}$ is given by
\begin{equation*}
	\begin{split}
		\bar{\mathcal{H}} := \Bigg\{&(m,w) \in \left(L^{1+\alpha}(\mathbb{R}^{n}) \cap W^{1,\hat{q}}(\mathbb{R}^{n})\right) \times L^{\hat q}(\mathbb{R}^{n}): (m,w) \text{ satisfies \eqref{min_delta_pro_4}  and \eqref{test_2}, } m \geq, \not\equiv  0 \text{ a.e. in } \mathbb{R}^{n}, \\
		& \int_{\mathbb R^n}m\,dx=\int_{\mathbb R^n}\bar{m}_{\delta,k}\,dx, \text{ where } \bar{m}_{\delta,k} \text{ is the minimizer of } \bar{e}_{\delta} \text{ obtained in Proposition } \ref{exi_min_appro_re} \Bigg\}.
	\end{split}
\end{equation*}
Indeed, for any $(m,w)\in{\mathcal {\bar H}}$,   let $\rho:=\frac{1}{\int_{\mathbb R^n}\left[(\bar m_{\delta,k}\ast\eta_k)^{\alpha}\ast \eta_k\right] m \,dx}$ and $(\hat  m,\hat w)=\rho(m,w)\in\mathcal H$,  then it follows from \eqref{sup_inf_exch} that 
\begin{align*}
\mathcal E_{\delta}(\hat m,\hat w)=\rho\mathcal E_{\delta}( m,w) \geq \mathcal E_{\delta}(\bar m_{\delta,k},\bar w_{\delta,k})=\bar\mu_{\delta,k}.
\end{align*}
Hence,
\begin{align*}
\mathcal E_{\delta}(m,w)\geq \bar \mu_{\delta,k}\int_{\mathbb R^n}\left[(\bar m_{\delta,k}\ast \eta_k)^{\alpha}\ast\eta_k\right]m\,dx,
\end{align*}
so $\bar {\mathcal E}_{\delta}(m,w)\geq 0$  for any $(m,w)\in \bar {\mathcal H}$.
On the other hand,  using $\mathcal E_{\delta}(\bar m_{\delta,k},\bar w_{\delta,k})=\bar \mu_{\delta,k}$ and (\ref{new_constraint_tildeE}), we find that $\inf\limits_{(m,w)\in\bar {\mathcal H}}\bar {\mathcal E}_{\delta}(m,w)=0$ is achieved by $(\bar m_{\delta,k},\bar w_{\delta,k}).$  Consequently, we obtain \eqref{reallyuseprop33dec}.

In what follows, we claim that
\begin{equation}\label{sup_inf_exch_bar}
	\begin{aligned}
&\bar \lambda_{\delta,k}\int_{\mathbb{R}^n}\bar{m}_{\delta,k}\,dx\\
&:=	\sup\left\{\lambda\int_{\mathbb{R}^n}\bar{m}_{\delta,k}\,dx :-\Delta\psi+H(\nabla \psi)+\lambda\leq -\bar \mu_{\delta,k} \left(\bar{m}_{\delta,k}\ast\eta_{k}\right)^{\alpha}\ast\eta_{k}+\delta V(x)+1\,\text{on}\,\mathbb{R}^n\,\text{for some}\,\psi\in\mathcal{B}\right\}\\
        &=\min_{(m,w)\in \bar{\mathcal{H}}}\bar {\mathcal{E}}_{\delta}(m,w).
    \end{aligned}
\end{equation}
To this end, we first define the following functional 
\begin{equation*}
 	\bar{\mathcal{L}}(m,w,\lambda,\psi;\mu):=\bar{\mathcal{E}}_{\delta}(m,w)+\int_{\mathbb{R}^n}(m\Delta \psi+w\cdot\nabla \psi)\,dx-\lambda\int_{\mathbb{R}^n} m \,dx+ \lambda\int_{\mathbb{R}^n} \bar{m}_{\delta,k}\,dx.
 \end{equation*}
Similarly to the above, we have
\begin{equation}\label{sup_inf_exch_bar_1}
		\min_{(m,w)\in \bar{\mathcal{H}}}\bar{\mathcal{E}}_{\delta}(m,w)=\min_{(m,w)}\sup_{(\lambda, \psi)\in \mathbb{R}\times \mathcal{B}}\bar{\mathcal{L}}(m,w,\lambda,\psi;\mu),
\end{equation}
where the minimum in the right-hand side is taken over all pairs
\begin{equation*}
(m,w) \in \left(L^{1+\alpha}(\mathbb{R}^{n}) \cap W^{1,\hat{q}}(\mathbb{R}^{n})\right) \times L^{\hat q}(\mathbb{R}^{n})
\end{equation*}
that satisfy \eqref{eq-integrable}.  Moreover, we proceed with the same argument as deriving (\ref{sup_inf_exch_4}) and obtain from (\ref{sup_inf_exch_bar_1}) that (\ref{sup_inf_exch_bar}) holds. 
Due to the boundedness of $\bar\mu_{\delta ,k}$ and smoothness of $\left(\bar{m}_{\delta,k}\ast\eta_{k}\right)^{\alpha}\ast\eta_{k}$, we obtain from $(i)$ and $(ii)$ of Lemma \ref{lemma22preliminary} with $f_{k}=-\bar \mu_{\delta,k} \left(\bar{m}_{\delta,k}\ast\eta_{k}\right)^{\alpha}\ast\eta_{k}$ and $V_{k}(x)=\delta V(x)+1$ that for $\bar \lambda_{\delta,k}$ given in (\ref{sup_inf_exch_bar}), there exists $\bar{u}_{\delta,k}\in C^{2}(\mathbb{R}^n)$ bounded from below (depending on $\delta$ and $k$) such that
\begin{equation}\label{sup_inf_exch_7}
	-\Delta\bar{u}_{\delta,k}+H(\nabla \bar{u}_{\delta,k})+ \bar \lambda_{\delta, k}=-\bar\mu_{\delta,k} \left(\bar{m}_{\delta,k}\ast\eta_{k}\right)^{\alpha}\ast\eta_{k}+\delta V(x)+1\,\,\text{on}\,\,\mathbb{R}^n.
\end{equation}
Moreover, invoking Lemma \ref{sect2-lemma21-gradientu} and Lemma \ref{lowerboundVkgenerallemma22}, one has the solution $\bar u_{\delta,k}$ to (\ref{sup_inf_exch_7}) satisfies (\ref{estimate_gra}). In addition, 
\begin{equation*}
	|\Delta\bar{u}_{\delta,k}(x)|\leq C_{H}|\nabla \bar{u}_{\delta,k}(x)|^{\gamma}+|\bar{\mu}_{\delta,k} \left(\bar{m}_{\delta,k}\ast\eta_{k}\right)^{\alpha}\ast\eta_{k}|+|\bar \lambda_{\delta, k}|+\delta V(x)+1\leq C_{k,\delta}(1+ V(x))\,\, \text{on}\,\, \mathbb{R}^n.
\end{equation*}
Therefore, we have $\bar{u}_{\delta,k}\in \mathcal{B}$. 
Moreover, combining \eqref{reallyuseprop33dec} with (\ref{sup_inf_exch_bar}), one has $\bar{\lambda}_{\delta,k}=0$. 
Now, with the aid of \eqref{sup_inf_exch_7}, we can use \eqref{test_2} with the test function $\psi=\bar{u}_{\delta,k}$ to obtain that
\begin{equation*}
	\begin{aligned}
	0&=\int_{\mathbb R^n}\bar{m}_{\delta,k}L\left (-\frac{\bar{w}_{\delta,k}}{\bar{m}_{\delta,k}}\right)\,dx +\int_{\mathbb R^n}\left((\delta V +1)\bar{m}_{\delta,k}\right)\,dx- \bar{\mu}_{\delta,k}\int_{\mathbb R^n}\left[\left(\bar{m}_{\delta,k}\ast\eta_{k}\right)^{\alpha}\ast\eta_{k}\right]\bar{m}_{\delta,k}\,dx\\
	&=\int_{\mathbb R^n}\bar{m}_{\delta,k}\left( L\left (-\frac{\bar{w}_{\delta,k}}{\bar{m}_{\delta,k}}\right)-\Delta\bar{u}_{\delta,k} + H(\nabla \bar{u}_{\delta,k}) \right)\,dx\\
	&=\int_{\mathbb R^n}\bar{m}_{\delta,k}\left(L\left (-\frac{\bar{w}_{\delta,k}}{\bar{m}_{\delta,k}}\right)-\nabla\bar{u}_{\delta,k}\cdot\frac{\bar{w}_{\delta,k}}{\bar{m}_{\delta,k}} + H(\nabla \bar{u}_{\delta,k}) \right)\,dx.	
	\end{aligned}
\end{equation*}
Using the properties of the Legendre transform and \eqref{general-Lagrangian}, we have
\begin{equation*}
\frac{\bar{w}_{\delta,k}(x)}{\bar{m}_{\delta,k}(x)}=-\nabla H(\nabla \bar{u}_{\delta,k}(x)),\,\,\text{for}\,\, \bar{m}_{\delta,k}(x)\neq0,\,\,x\in \mathbb{R}^n.
\end{equation*}
In addition, invoking the definition \eqref{general-Lagrangian}, we obtain that $\bar{w}_{\delta,k}(x)=0$ where $\bar{m}_{\delta,k}(x)=0$, $x\in\mathbb{R}^n$. As a consequence, we obtain the relationship between $\bar{m}_{\delta,k}$ and $\bar u_{\delta,k}$, which is
\begin{equation*}
	\Delta \bar{m}_{\delta,k}+\nabla \cdot \left(\bar{m}_{\delta,k}  \nabla H (\nabla \bar{u}_{\delta,k})\right)=0
\end{equation*}
in the weak sense.   This together with (\ref{sup_inf_exch_7}) and $\bar\lambda_{\delta,k}=0$ indicates that $(\bar{u}_{\delta,k},\bar{m}_{\delta,k},\bar{\mu}_{\delta,k})$ is a solution to \eqref{MFG_mass_supercritical_linear}.
\end{proof}
Concerning the regularity of 
$m$ constructed in Proposition \ref{classcal_sol_mfg}, we have  the following proposition
\begin{proposition}\label{classcal_sol_mfg_1}
Let $(\bar{u}_{\delta,k},\bar{m}_{\delta,k},\bar{\mu}_{\delta,k})$ be the solution of \eqref{MFG_mass_supercritical_linear}, then $\bar{m}_{\delta,k}\in W^{1,p}(\mathbb{R}^n)$ for all $p>1$.  
\end{proposition}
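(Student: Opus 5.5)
We plan to prove this by bootstrapping on the Fokker--Planck equation, using that the drift $\nabla H(\nabla \bar u_{\delta,k})$ is locally bounded and that the mass of $\bar m_{\delta,k}$ at infinity is controlled by the potential. Since $\bar u_{\delta,k}\in C^2(\mathbb R^n)$ and \eqref{estimate_gra} holds, homogeneity (H1) gives
\[
|\nabla H(\nabla \bar u_{\delta,k})|\le C|\nabla\bar u_{\delta,k}|^{\gamma-1}\le C_{\delta,k}(1+V(x))^{\frac{1}{\gamma'}} .
\]
This bound is finite on compacts but unbounded globally. Hence $\bar w_{\delta,k}=-\bar m_{\delta,k}\nabla H(\nabla\bar u_{\delta,k})$ is not immediately in every $L^p$. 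We will therefore first prove a global $L^\infty$ bound on $\bar m_{\delta,k}$ and then upgrade to $W^{1,p}$ through Lemma \ref{proposition-lemma21-FP}.

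\emph{Step 1: local regularity.} On each ball $B_R$ the drift is bounded, so by local elliptic regularity for Fokker--Planck equations (Theorem 1.6.5 in \cite{bogachev2022fokker}) we get $\bar m_{\delta,k}\in W^{1,p}_{\rm loc}\cap L^\infty_{\rm loc}$ for all $p$. Starting from $\bar m_{\delta,k}\in W^{1,\hat q}$, we bootstrap, since $\nabla\cdot(\bar m\, b)$ with $b\in L^\infty_{\rm loc}$ gains integrability at each step.

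\emph{Step 2: global $L^\infty$ bound.} This is the main obstacle, because the drift grows like $V^{1/\gamma'}$. We apply the blow-up argument of Lemma \ref{blowupanalysismlinfboundcritical} with $k$ fixed, or simply use it along the constant sequence. The system \eqref{MFG_mass_supercritical_linear} has the form \eqref{eq-potential-newest} with $g[m]=(m\ast\eta_k)^\alpha\ast\eta_k$. This $g$ satisfies \eqref{gkm-newest-estimate-crucial2024207}--\eqref{gkm-newest-estimate-crucial202420711} by Young's inequality for convolutions, with the $2R$-ball enlargement absorbing the support $1/k$ of the mollifier. The remaining hypotheses of that lemma also hold:
\begin{itemize}
\item $\int V\bar m_{\delta,k}<\infty$ by \eqref{min_delta_pro_4};
\item $\bar m_{\delta,k}\in L^1\cap L^{1+\alpha}$, using Lemma \ref{lemma21-crucial} and the constraint;
\item $\bar\mu_{\delta,k}$ is bounded;
\item $\bar u_{\delta,k}$ is bounded below by \eqref{estimate_gra}.
\end{itemize}
The lemma then yields $\bar m_{\delta,k}\in L^\infty(\mathbb R^n)$. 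If one prefers to avoid the blow-up route, an alternative is the Lyapunov-function argument indicated at the end of that lemma's proof. Since $\bar u_{\delta,k}\gtrsim V^{1/\gamma}$, one tests the FP equation against powers of $\bar m$ times cutoffs and uses the coercivity of $-\Delta \bar u+H(\nabla\bar u)\sim \delta V$ at infinity to control the drift term.

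\emph{Step 3: global $W^{1,p}$.} With $\bar m\in L^1\cap L^\infty$, we want $\bar w=-\bar m\nabla H(\nabla\bar u)\in L^p$ for every $p$. For this we show $\bar m |\nabla\bar u|^{(\gamma-1)p}\in L^1$, that is, $\int \bar m (1+V)^{p/\gamma'}<\infty$. We would obtain it by testing the FP equation with $\varphi=e^{\epsilon \bar u}$ or with powers $\bar u^s$ (admissible in $\mathcal B$ after truncation). The identity $\int \bar m\,(\Delta\varphi-\nabla H(\nabla\bar u)\cdot\nabla\varphi)=0$, combined with $-\Delta\bar u+H(\nabla\bar u)=\delta V+1-\bar\mu g$ and $\nabla H(p)\cdot p=\gamma H(p)$, gives weighted moment bounds $\int\bar m\,V\,\bar u^{s}<\infty$ iteratively. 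Since $\bar u\gtrsim V^{1/\gamma}$, this gives all the needed moments. Consequently $\bar w\in L^p$ for all $p>1$, because $|\bar w|^p\le \|\bar m\|_\infty^{p-1}\bar m|\nabla H|^p$. Finally, $\int \bar m\Delta\varphi=\int \bar w\cdot\nabla\varphi$, so Lemma \ref{proposition-lemma21-FP} gives $\|\nabla\bar m\|_{L^p}\le C_p\|\bar w\|_{L^p}$. Together with $\bar m\in L^1\cap L^\infty$, this yields $\bar m_{\delta,k}\in W^{1,p}(\mathbb R^n)$ for all $p>1$.
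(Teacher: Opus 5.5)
Your argument works. Its core is the right mechanism: local regularity, then finite moments $\int\bar m_{\delta,k}(1+V)^r\,dx$ obtained by testing the Fokker--Planck equation with truncated powers $\psi_R(\bar u_{\delta,k})$ of the Lyapunov function $\bar u_{\delta,k}$, then Lemma~\ref{proposition-lemma21-FP}. The paper gives no argument of its own here and only cites \cite{cesaroni2018concentration,cirant2025critical}. Where you take a genuinely different route is Step~2, which imports a global $L^\infty$ bound from the blow-up Lemma~\ref{blowupanalysismlinfboundcritical} applied to a constant sequence.

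For fixed $k$ this detour is unnecessary. Since $0\le(\bar m_{\delta,k}\ast\eta_k)^\alpha\ast\eta_k\le(\|\eta_k\|_{L^\infty}\|\bar m_{\delta,k}\|_{L^1})^\alpha$, the operator $L\varphi=\Delta\varphi-\nabla H(\nabla\bar u_{\delta,k})\cdot\nabla\varphi$ satisfies $L\bar u_{\delta,k}\le-\delta V+C_k$ without any information on $\|\bar m_{\delta,k}\|_{L^\infty}$. Once all moments are finite, H\"older's inequality gives
\[
\int_{\mathbb R^n}\bar m^p(1+V)^{\frac{p}{\gamma'}}\,dx\le\Big(\int_{\mathbb R^n}\bar m^q\,dx\Big)^{\theta}\Big(\int_{\mathbb R^n}\bar m(1+V)^{K}\,dx\Big)^{1-\theta},\qquad \theta=\frac{p-1}{q-1},\quad K=\frac{p}{\gamma'(1-\theta)},
\]
so $\bar w_{\delta,k}\in L^p$ for every $p<q$ as soon as $\bar m_{\delta,k}\in L^q$. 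Lemma~\ref{proposition-lemma21-FP} and Sobolev embedding then raise $q$. Starting from $\bar m_{\delta,k}\in W^{1,\hat q}$, you pass $q>n$ (hence $L^\infty$, hence every $p$) after finitely many steps.

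Your route buys a one-line Step~3, $|\bar w|^p\le\|\bar m\|_{L^\infty}^{p-1}\bar m\,|\nabla H(\nabla\bar u)|^p$. The price is a lemma built for bounds uniform in $k$, whose final, omitted step is itself a Lyapunov-function $W^{1,q}$ estimate with $q>n$, which is essentially the proposition you are proving. The bootstrap is therefore both more elementary and free of any circularity concern.

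Two minor points:
\begin{itemize}
\item Your verification of \eqref{gkm-newest-estimate-crucial202420711} holds only for $R\ge 2/k$, because $(m\ast\eta_k)^\alpha\ast\eta_k$ on $B_R(x_0)$ depends on $m$ in $B_{R+2/k}(x_0)$. This is harmless here, since the coupling is bounded for fixed $k$.
\item Of your two test functions, only the powers work in general. In $L e^{\epsilon\bar u}=\epsilon e^{\epsilon\bar u}(L\bar u+\epsilon|\nabla\bar u|^2)$, the term $\epsilon|\nabla\bar u|^2$ is not absorbed by $(\gamma-1)H(\nabla\bar u)+\delta V$ when $\gamma<2$.
\end{itemize}
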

\begin{proof}
We skip the proof and refer the readers to the arguments shown in Proposition 3.6 of \cite{cesaroni2018concentration} or Proposition 5.2 of \cite{cirant2025critical}.
\end{proof}

\begin{remark}\label{rem2}
	In the case of $\gamma'>n$, since   $W^{1,\hat{q}}(\mathbb{R}^n)\hookrightarrow C^{0,\theta}(\mathbb{R}^n)$ for some $\theta\in(0,1)$, we can bypass the regularized problem \eqref{delta_problem_tilde}   and consider the minimization problem \eqref{delta_problem} directly. 
\end{remark}

With the aid of Lemma \ref{blowupanalysismlinfboundcritical}, we shall establish the uniform boundedness of $(\bar{u}_{\delta,k},\bar{m}_{\delta,k},\bar{\mu}_{\delta,k})$ obtained in Proposition \ref{classcal_sol_mfg} in some space and take the limit to obtain a solution of \eqref{MFG_mass_supercritical}, which proves Lemma \ref{potential_MFG}.  
 
\vspace{0.2cm}
\textbf{Proof of Lemma \ref{potential_MFG}:} We present the proof only for the case $\gamma'\leq n$, since the case $\gamma'> n$ can be treated in a similar manner.  First of all, it follows from Proposition \ref{exi_min_appro_re}, Proposition \ref{classcal_sol_mfg} and Proposition \ref{classcal_sol_mfg_1} that, for each  $k>0$ and $\delta\in(0,1)$, (\ref{MFG_mass_supercritical_linear}) has a solution $(\bar u_{\delta,k}, \bar m_{\delta,k}, \bar \mu_{\delta,k})\in C^{2}(\mathbb{R}^n)\times W^{1,p}(\mathbb{R}^n)\times \mathbb{R}$, $\forall p>1$,  satisfying (\ref{estimate_gra}),
 and $$(\bar m_{\delta,k}, \bar w_{\delta,k})\in \mathcal{A}\cap \left\{m: \left\Vert \left[\left(\tilde{m}_{\delta,k}\ast\eta_{k}\right)^{\alpha}\ast\eta_{k}\right]m\right\Vert_{L^{1}(\mathbb R^n)}=1 \right\}\cap \left\{m: \Vert m\ast \eta_{k}\Vert_{L^{1+\alpha}(\mathbb R^n)}=1 \right\}$$ 
 is a minimizer of $\bar{e}_{\delta, k}=\tilde e_{\delta,k}$.  
In addition, similar to the proof of \eqref{min_est_2}, we can obtain that 
\begin{equation}\label{MFG_mass_sup_class1}
	\int_{\mathbb{R}^n}\bar m_{\delta,k}L\left(-\frac{\bar w_{\delta,k}}{\bar m_{\delta,k}}\right)\,dx\leq C \,\,\text{and}\,\, \int_{\mathbb R^n}(V(x) +1)\bar m_{\delta,k}\,dx\leq C_\delta, 
\end{equation}
where the constants $C_{\delta} > 0$ and $C > 0$ do not depend on $k$. 


In what follows, we aim to find a minimizer $(m_{\delta}, w_{\delta})$ for minimization problem \eqref{delta_problem} by taking the limit as $k \to +\infty$.  Noting that $(\bar u_{\delta,k}, \bar m_{\delta,k}, \bar \mu_{\delta,k})$ satisfies \eqref{assumptionsincrucialemmanewest}.  Then we apply Young's inequality to get
\begin{align*}
	\sup_{k}\Vert \left(\bar m_{\delta,k}\ast\eta_k\right)^{\alpha}\ast\eta_{k}\Vert_{L^{1+\frac{1}{\alpha}}(\mathbb R^n)}\leq \sup_{k}\Vert  \bar m_{\delta,k}^{\alpha}\Vert_{L^{1+\frac{1}{\alpha}}(\mathbb R^n)}<\infty,
\end{align*}
and
\begin{align*}
	\sup_{k}\Vert \left(\bar m_{\delta,k}\ast\eta_k\right)^{\alpha}\ast\eta_{k}\Vert_{L^{1+\frac{1}{\alpha}}(B_R(x_0))}\leq \sup_{k}\Vert  \bar m_{\delta,k}^{\alpha}\Vert_{L^{1+\frac{1}{\alpha}}(B_{2R}(x_0))}<\infty,~\text{for~any~}x_0\in\mathbb R^n\text{ and }R\text{ large.}
\end{align*}
Then,  we utilize Lemma \ref{blowupanalysismlinfboundcritical} {with $y_k\equiv 0$} to obtain that
\begin{equation}\label{eq4.35}
	\limsup_{k\to +\infty}\|\bar m_{\delta,k}\|_{L^\infty(\mathbb R^n)}<\infty.
\end{equation}
Next, by using Lemma \ref{sect2-lemma21-gradientu}, we can further derive
\begin{equation}\label{eq4.36}
|\nabla \bar u_{\delta,k}(x)|\leq C_{\delta}(1+V(x))^{\frac{1}{\gamma}}, \text{ for all $x\in \mathbb{R}^n$, where $C_{\delta}>0$ is independent of $k$.}
\end{equation}
Noting that $\bar u_{\delta,k}$ is bounded from below,  without loss of generality, we assume that $\bar u_{\delta,k}(0)=0$.
Thanks to \eqref{29uklemma22}, we obtain that for some $C_{\delta, k}>0$ depending on $k$ and $\delta$, $\bar u_{\delta,k}(x) \geq C_{\delta,k} V(x)^{\frac{1}{\gamma}} - C_{\delta,k} \to +\infty$ as $|x|\to+\infty$. Consequently, for each fixed $\delta$ and $k$, the function $\bar u_{\delta,k} \in C^2(\mathbb R^n)$ attains its minimum at some finite point $x_{\delta,k}$. By using  \eqref{eq4.35}, the boundedness of $\bar \mu_{\delta, k}$ and the coercivity of $V$, we can deduce from the $u$-equation of \eqref{MFG_mass_supercritical_linear} that $x_{\delta,k}$ is uniformly bounded with respect to $k$. Then, from $\bar u_{\delta, k}(0)=0$ and \eqref{eq4.36} , we get that
there exists $C_{\delta}>0$ depending only on $\delta\in(0,1)$ such that
\begin{equation*}
-C_{\delta}\leq \bar u_{\delta,k}(x)\leq C_{\delta}|x|(1+V(x)^{\frac{1}{\gamma}}) \text{ for all } x\in\mathbb{R}^n,
\end{equation*}
which indicates $\bar u_{\delta,k}$ is bounded from below uniformly in $k$.  By employing \eqref{29uklemma22}, we conclude that
\begin{equation}\label{eq4.37}
C_{1,\delta}V(x)^{\frac{1}{\gamma}}-C_{1,\delta}\leq \bar u_{\delta,k}(x)\leq C_{2,\delta}|x|(1+V(x)^{\frac{1}{\gamma}}),\text{ for all } x\in\mathbb {R}^n,
\end{equation}
where $C_{1,\delta},C_{2,\delta}>0$ are independent of $k$.

{It follows from \eqref{eq4.35}, \eqref{eq4.36} and (H1)-(H2)} that, for any $R>1$ and $p>1$,
\begin{equation}\label{eq4.306}
\|\bar w_{\delta,k}\|_{L^p(B_{2R}(0))}=\|\bar m_{\delta,k}\nabla H(\nabla \bar u_{\delta,k})\|_{L^p(B_{2R}(0))}\leq C_{p,R}<\infty,
\end{equation}
where the constant  $C_{p, R}>0$ depending only on $p$, $R$ and independent of $k$.  By applying Lemma \ref{proposition-lemma21-FP}, we obtain from \eqref{eq4.306} that $\|\bar m_{\delta,k}\|_{W^{1,p}(B_{2R}(0))}\leq C_{p,R}<\infty$. Choosing $p>n$ large enough and employing Sobolev embedding theorem,  we get
\begin{equation}\label{eq4.38}
\|\bar m_{\delta,k}\|_{C^{0,\theta_1}(B_{2R}(0))}\leq C_{\theta_1,R}<\infty \text{ for some $\theta_1\in(0,1)$.}
\end{equation}

Now, we estimate $\bar u_{\delta,k}$. For this purpose, we rewrite $u$-equation of \eqref{MFG_mass_supercritical_linear} in the form of
\begin{equation}\label{eq4.39}
-\Delta \bar u_{\delta,k}=-H(\nabla \bar u_{\delta,k})+\bar f_{\delta,k}\ \text{ with }\bar f_{\delta,k}(x):=-\bar \mu_{\delta,k}\left( \bar m_{\delta,k}\ast\eta_{k}\right)^{\alpha}\ast \eta_{k}+\delta V(x)+1.
\end{equation}
    By using \eqref{MFG-H}, \eqref{eq4.35}, \eqref{eq4.36} and the boundedness of $\bar \mu_{\delta, k}$, we obtain that for any $p>1$,
$$\|\bar f_{\delta,k}\|_{L^{p}(B_{2R}(0))}+\|H(\nabla \bar u_{\delta,k})\|_{L^{p}(B_{2R}(0))}\leq C_{p,R}<\infty.$$
Then by the classical $W^{2,p}$ estimates, we deduce from \eqref{eq4.37} and \eqref{eq4.39} that
\begin{equation*}
\|\bar u_{\delta,k}\|_{W^{2,p}(B_{R+1})}\leq C_{p,R}\left(\|\bar u_{\delta,k}\|_{L^{p}(B_{2R}(0))}+\|\bar f_{\delta,k}\|_{L^{p}(B_{2R}(0))}+\| H(\nabla \bar u_{\delta,k})\|_{L^{p}(B_{2R}(0))}\right)
\leq \bar C_{p,R}<\infty.
\end{equation*}
We obtain by taking $p>n$ large enough and using Sobolev embedding theorem that
\begin{equation}\label{eq4.41}
\|\bar u_{\delta,k}\|_{C^{1,\theta_2}(B_{R+1}(0))}\leq C_{\theta_2,R}<\infty, \text{ for some $\theta_2\in(0,1)$.}
\end{equation}
Combining \eqref{eq4.38}, (\ref{eq4.41}) {with (H1)-(H2)}, one gets
$$\|H(\nabla \bar u_{\delta,k})\|_{C^{0,\theta_3}(B_{R+1}(0))} +\|\bar f_{\delta,k}\|_{C^{0,\theta_3}(B_{R+1}(0))}\leq C_{\theta_3,R}<\infty, \text{ for some $\theta_3\in(0,1)$.}$$
Then, by using Schauder's estimates in \eqref{eq4.39}, we infer that
\begin{equation}\label{eq4.401}
\|\bar u_{\delta,k}\|_{C^{2,\theta_4}(B_{R}(0))} \leq C_{\theta_4,R}<\infty, \text{ for some $\theta_4\in(0,1)$.}
\end{equation}
Now, taking the limit of $k\to\infty$ and then $R\to\infty$, we can apply Arzel\`{a}-Ascoli theorem and diagonal argument to obtain that there exists  $u_{\delta}\in C^2(\mathbb R^n)$ such that
\begin{equation}\label{eq4.42}
\bar u_{\delta,k}\overset{k\to+\infty}\longrightarrow u_{\delta} \text{ in }C^{2,\theta_5}_{\rm loc}(\mathbb {R}^n) \text{ for some $\theta_5\in(0,1)$.}
\end{equation}
On the other hand, it follows from Lemma \ref{lemma21-crucial}, (\ref{MFG-L}) and \eqref{MFG_mass_sup_class1} that, there exists $(m_{\delta},w_{\delta})\in W^{1,\hat q}(\mathbb R^n)\times \big(L^{1}(\mathbb R^n)\cap L^{\hat q}(\mathbb R^n) \big)$ such that, as $k\to\infty$,
\begin{align}\label{eq4.4404}
\bar m_{\delta,k}\to m_{\delta} \text{ a.e. in $\mathbb R^n$ \ and \ \ }(\bar m_{\delta,k}, \bar w_{\delta,k})\rightharpoonup (m_{\delta},w_{\delta}) \text{~weakly in~}  W^{1,\hat{q}}(\mathbb R^n)\times L^{\hat q}(\mathbb R^n).
\end{align}
In addition, invoking Lemma \ref{lem4.1}, we obtain that
\begin{align}\label{eq4.44}
\bar m_{\delta,k}\overset{k\to+\infty}\rightarrow m_{\delta} \text{~strongly in~} L^1(\mathbb R^n)\cap L^{1+\alpha}(\mathbb R^n) .
\end{align}

Passing to the limit as $k\to+\infty$ in \eqref{MFG_mass_supercritical_linear}, we then conclude from \eqref{eq4.42}-\eqref{eq4.44} and the boundedness of $\bar \mu_{\delta, k}$ that there exists $\mu_{\delta}\in \mathbb R$ such that
 $(u_{\delta},m_{\delta},\mu_{\delta})$ solves the auxiliary MFG system \eqref{MFG_mass_supercritical}. 
In particular, by \eqref{eq4.36} and \eqref{eq4.37}, we get
\begin{equation}\label{eq4.46}
|\nabla u_{\delta}(x)|\leq C_{\delta}(1+V(x))^{\frac{1}{\gamma}}\,\, \text{and}\,\, C_{1,\delta}V(x)^{\frac{1}{\gamma}}-C_{1,\delta}\leq u_{\delta}(x)\leq C_{2,\delta}|x|V(x)^{\frac{1}{\gamma}}+C_{2,\delta},\text{ for all } x\in\mathbb {R}^n.
\end{equation}
In view of \eqref{eq4.35} and \eqref{eq4.4404}, we have $m_{\delta}\in L^\infty(\mathbb R^n)$.  Then, using \eqref{MFG_mass_supercritical} and \eqref{eq4.46}, we can follow the approach outlined in \cite[Proposition. 5.1]{cirant2025critical} to obtain
\begin{equation*}
w_{\delta}=- m_{\delta}\nabla H(\nabla u_{\delta})\in L^p(\mathbb R^n)\,\,\text{and}\,\, m_{\delta}\in W^{1,p}(\mathbb R^n)
\,\,\text{for all}\,\, p>1.
\end{equation*}
In addition,  we deduce from  \eqref{MFG_mass_sup_class1} and Fatou's lemma that $\int_{\mathbb{R}^n}V(x) m_{\delta}\leq C_{\delta}$ for some $C_{\delta}>0$ depending on $\delta$.  Therefore, we conclude that $(m_{\delta},w_{\delta})\in\mathcal{A}\cap \left\{m: \Vert m\Vert_{L^{1+\alpha}(\mathbb R^n)}=1 \right\}$.

Finally, we prove that $(m_{\delta},w_{\delta})\in\mathcal{A}\cap \left\{m: \Vert m\Vert_{L^{1+\alpha}(\mathbb R^n)}=1 \right\}$ is a minimizer of $e_{\delta}$ defined by \eqref{delta_problem}.  Indeed,    a standard approximation argument shows  that $\lim_{k\to \infty}\tilde e_{\delta,k}=\lim_{k\to \infty}\bar e_{\delta,k}= e_{\delta}$. Hence, using \eqref{eq4.4404}, we obtain from Fatou's lemma that
 \begin{equation*}
 \begin{aligned}
 e_{\delta}=\lim_{k\to \infty}\bar e_{\delta,k}&=\lim_{k\to\infty}\left\{ \int_{\mathbb{R}^n}\bar m_{\delta,k} L\left(-\frac{\bar w_{\delta,k}}{\bar m_{\delta,k}}\right)\,dx+\int_{\mathbb R^n}\left(\delta V(x) +1\right)\bar m_{\delta,k}\,dx \right\}\\
 &\geq \int_{\mathbb R^n}m_{\delta} L\left(-\frac{w_{\delta}}{m_{\delta}}\right)\, dx+\int_{\mathbb R^n}\left(\delta V(x)+1\right)m_{\delta}\,dx=\mathcal{E}_{\delta}(m_{\delta},w_{\delta})\geq e_{\delta}, 
 \end{aligned}
 \end{equation*}
  which shows $(m_{\delta},w_{\delta})$ minimizes $e_{\delta}$. Moreover, since  $(u_{\delta},m_{\delta},\mu_{\delta})$ satisfies \eqref{MFG_mass_supercritical}, we test the $m$-equation in \eqref{MFG_mass_supercritical} against $u_\delta$ to obtain
 \begin{align*}
	\int_{\mathbb R^n}m_{\delta}\Delta u_{\delta}\,dx=\int_{\mathbb R^n}w_{\delta}\cdot\nabla u_{\delta}\,dx=-\int_{\mathbb R^n}m_{\delta}\nabla H(\nabla u_{\delta})\cdot \nabla u_{\delta}\,dx=-\gamma\int_{\mathbb R^n}m_{\delta}H(\nabla u_{\delta})\,dx.
 \end{align*}
 Similarly, multiplying the $u$-equation in \eqref{MFG_mass_supercritical} by $m_{\delta}$, we integrate it to get
 \begin{equation}\label{eq4.301}
	\begin{aligned}
		\mu_{\delta}&=-(1-\gamma)\int_{\mathbb R^n}m_{\delta}H(\nabla u_{\delta})\,dx+\int_{\mathbb R^n}(\delta V(x)+1)m_{\delta}\,dx\\
 		&=\int_{\mathbb R^n}m_{\delta} L\left(-\frac{w_{\delta}}{m_{\delta}}\right)\, dx+\int_{\mathbb R^n}(\delta V(x)+1)m_{\delta}\,dx=\mathcal{E}_{\delta}(m_{\delta},w_{\delta})= e_{\delta},
	\end{aligned}
 \end{equation}
{where we have used \eqref{finalpohomar18}, which follows from the strict convexity of $H$ and assumption \textup{(H1)}, in the second equality of \eqref{eq4.301}. }  This completes the proof of the lemma.	
$\hfill\qed$
\subsection{Auxiliary potential-free MFG systems }\label{potential_free_MFG_sys}

In this subsection, we analyze the limiting process of the triple $( u_{\delta},m_{\delta},\mu_{\delta})$ corresponding to a minimizer of $e_{\delta}$ obtained in Lemma \ref{potential_MFG} as $\delta\to 0^+$, which in turn proves the existence of solutions to an auxiliary MFG system without potential {in the mass-supercritical case}. To this end, we first recall from \eqref{e_0_min} that 
\begin{equation*}
	e_{0}=\inf_{(m,w)\in \mathcal{A}\cap \left\{m: \Vert m\Vert_{L^{1+\alpha}(\mathbb R^n)}=1 \right\}} \mathcal{E}_{0}(m,w),
\end{equation*}
where 
\begin{equation*}
\mathcal{E}_{0}(m,w):=\int_{\R^n} mL\left(-\frac{w}{m}\right) \, dx+\int_{\R^n}m\,dx. 
\end{equation*}
The main results can be  stated as follows.

\begin{lemma}\label{Existience_of_MFG_free_potential}
Let $(u_{\delta}, m_{\delta},\mu_{\delta})$ be as in Lemma \ref{potential_MFG}. Then, up to subsequences, $(u_{\delta}, m_{\delta},\mu_{\delta})$ converges in $C_{loc}^{2}(\mathbb{\R}^N)\times L^{p}(\mathbb{R}^n)\times \mathbb{\R}$, for all $p\in[1+\alpha, \hat q^{*})$, to a solution $(u_{0},m_{0}, \mu_{0})$ of 
\begin{align}\label{mass_supercritical_problem_free_potential}
	\left\{\begin{array}{ll}
		-\Delta u+H(\nabla u)+\mu m^{\alpha}=1,&x\in\mathbb R^n,\\
		\Delta m+\nabla\cdot (m\nabla H(\nabla u))=0,&x\in\mathbb R^n,\\
		w=-m\nabla H(\nabla u),\ \int_{\mathbb R^n}m^{1+\alpha}\,dx=1,
	\end{array}
	\right.
\end{align}
Moreover, $0<m_{0}\leq C\kappa_{1}e^{-\kappa_{2}|x|^{2}}$ for some $\kappa_1,~\kappa_2>0$, and $\mu_{0}=e_0>0$. Furthermore, set $w_{0}=-m_0\nabla H(\nabla u_0)$, then $(m_{0},w_{0})$ is a minimizer of \eqref{e_0_min}.
\end{lemma}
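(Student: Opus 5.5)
The plan is to pass to the limit $\delta\to0^+$ in the solutions $(u_\delta,m_\delta,\mu_\delta)$ of \eqref{MFG_mass_supercritical} given by Lemma \ref{potential_MFG}, using $\mu_\delta=e_\delta$ and the energy bounds to prevent vanishing and loss of mass.

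\textbf{Uniform bounds and monotonicity of $e_\delta$.} First, $e_\delta$ is nondecreasing in $\delta$ and $e_\delta\ge e_0$. Testing $\mathcal E_\delta$ on a fixed compactly supported (or exponentially decaying) competitor shows $\limsup_{\delta\to0}e_\delta\le e_0+o(1)$, so $e_\delta\to e_0$. Next, $e_0>0$: by Lemma \ref{lemma21-crucial} and Sobolev/Gagliardo--Nirenberg interpolation with $\alpha<\alpha^*$, the constraint $\|m\|_{L^{1+\alpha}}=1$ forces $\int mL+\int m\ge c>0$. Hence $\mu_\delta\in[e_0,C]$, and $\int m_\delta L(-w_\delta/m_\delta)$, $\|m_\delta\|_{L^1}$ and $\delta\int Vm_\delta$ are all bounded. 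By Lemma \ref{lemma21-crucial}, $m_\delta$ is bounded in $W^{1,\hat q}$ and $w_\delta$ is bounded in $L^1\cap L^{\hat q}$.

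\textbf{Ruling out vanishing and dichotomy.} Since $V$ no longer gives compactness, I will use translations. First, pick $y_\delta$ with $\int_{B_1(y_\delta)}m_\delta^{1+\alpha}\ge c>0$. If this failed, Lions' lemma together with the $W^{1,\hat q}$ bound would give $m_\delta\to0$ in $L^{1+\alpha}$, contradicting the constraint. Second, apply Lemma \ref{blowupanalysismlinfboundcritical} to the translated systems, with potential $\delta V(x+y_\delta)$ and $g_k[m]=m^\alpha$. This gives $\|m_\delta\|_{L^\infty}\le C$, and Lemma \ref{sect2-lemma21-gradientu} then gives local gradient bounds on the translates $u_\delta(\cdot+y_\delta)$, normalized at $0$. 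Third, follow the local $W^{2,p}$/Schauder argument used in the proof of Lemma \ref{potential_MFG}. It yields $C^2_{loc}$ convergence of the translates to $u_0$, and weak $W^{1,\hat q}$ and a.e. convergence $m_\delta(\cdot+y_\delta)\to m_0\not\equiv0$, with $\mu_\delta\to\mu_0=e_0$. To pass to the limit in the $u$-equation we need $\delta V(x+y_\delta)\to0$ locally. This follows from (V2) and $\delta\int Vm_\delta\le C$ together with the local mass lower bound, after passing to a subsequence; alternatively, (V2) shows $\delta V(y_\delta)$ is bounded, and the limit potential term is a constant $c_0\ge0$, which I will show is zero by the energy comparison below.

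\textbf{Minimality and strong convergence.} Weak lower semicontinuity gives $\mathcal E_0(m_0,w_0)\le e_0$ with $t:=\|m_0\|_{L^{1+\alpha}}\in(0,1]$. If $t<1$, rescale $(m_0,w_0)/t$ to obtain an admissible pair with energy $\mathcal E_0(m_0,w_0)/t$. Combined with strict subadditivity, coming from the $1$-homogeneity of $\mathcal E_0$ versus the $(1+\alpha)$-homogeneity of the constraint, namely $1 = t^{1+\alpha}+(1-t^{1+\alpha})$ while energy splits linearly, this excludes $t<1$ (a Brezis--Lieb splitting of both the energy and the constraint). Hence $m_\delta\to m_0$ strongly in $L^{1+\alpha}$, and by interpolation with the $W^{1,\hat q}$ bound, strongly in $L^p$ for $p\in[1+\alpha,\hat q^*)$. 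Then $(m_0,w_0)$ minimizes $e_0$. Passing to the limit in the FP equation gives $w_0=-m_0\nabla H(\nabla u_0)$ through the Legendre identity, as in Proposition \ref{classcal_sol_mfg}, and the identity $\mu_0=\mathcal E_0=e_0$ forces $c_0=0$. Finally, $u_0$ is bounded below by Lemma \ref{lemma22preliminary}, and $m_0\in W^{1,p}$, so Lemma \ref{mdecaylemma} yields exponential decay, while $m_0>0$ follows from the strong maximum principle (Harnack) for the FP equation.

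The main obstacle I expect is the compactness step: excluding dichotomy and showing the translated potential disappears. This requires the careful subadditivity/Brezis--Lieb argument under the nonsmooth functional, rather than the routine regularity estimates.
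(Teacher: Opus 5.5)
Your outline follows the paper's route up to the passage to the limit: $e_\delta\to e_0$, non-vanishing via Lions' lemma, translation by $y_\delta$, the uniform $L^\infty$ bound from Lemma \ref{blowupanalysismlinfboundcritical}, and local $W^{2,p}$/Schauder compactness. The gap is the decisive step, excluding $t:=\|m_0\|_{L^{1+\alpha}}<1$, which you assert rather than prove.

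Weak lower semicontinuity together with the rescaled competitor $(m_0,w_0)/t$ only gives $e_0\le \mathcal E_0(m_0,w_0)/t\le e_0/t$, and this is consistent with $t<1$. The Brezis--Lieb/strict-subadditivity splitting you invoke is not available for this functional, for three reasons:
\begin{itemize}
\item The remainder $\big(m_\delta(\cdot+y_\delta)-m_0,\,w_\delta(\cdot+y_\delta)-w_0\big)$ is not a nonnegative admissible pair, so its Lagrangian term is not even finite.
\item The integrand $mL(-w/m)$, comparable to $|w|^{\gamma'}m^{1-\gamma'}$, is singular at $m=0$, so no Brezis--Lieb lemma applies.
\item Localizing with a cutoff $\chi_R$ breaks the constraint $\Delta m=\nabla\cdot w$ by the defect $\nabla\chi_R\cdot(\nabla m-w)$. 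Correcting it can make the Lagrangian infinite where $m$ is small.
\end{itemize}

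The missing idea, which is how the paper closes the argument, is to use the limit equation instead of any splitting:
\begin{itemize}
\item The limit triple solves the first two equations of \eqref{mass_supercritical_problem_free_potential} with $\mu_0=e_0$, but possibly with $\int m_0^{1+\alpha}=t^{1+\alpha}\le1$.
\item By Lemma \ref{mdecaylemma}, $m_0$ decays exponentially, so $\int Vm_0<\infty$ and $(m_0,w_0)\in\mathcal A$.
\item Testing the $u$-equation with $m_0$ and the Fokker--Planck equation with $u_0$, as in \eqref{mar18poho2}, gives $\mathcal E_0(m_0,w_0)=\mu_0\int m_0^{1+\alpha}=e_0t^{1+\alpha}$.
\item The admissible competitor $(m_0,w_0)/t$ then yields $e_0\le e_0t^{\alpha}$, hence $t=1$ because $e_0>0$.
\end{itemize}
Minimality of $(m_0,w_0)$ and strong $L^p$ convergence follow at once. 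You do write the identity $\mu_0=\mathcal E_0$, but only after assuming $t=1$; it has to be used before.

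Two ordering points come with this fix.

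First, Lemma \ref{mdecaylemma} requires the limit right-hand side to be exactly $1$. For that you need $\delta\int Vm_\delta\to0$, not merely boundedness. This follows from your own $e_\delta\to e_0$, since $\delta\int Vm_\delta=e_\delta-\mathcal E_0(m_\delta,w_\delta)\le e_\delta-e_0$. Your $L^{1+\alpha}$ lower bound on $B_1(y_\delta)$ plus the $L^\infty$ bound gives a local $L^1$ lower bound there. Combining this with (V2) gives $\delta V(y_\delta)\to0$, hence $\delta V(\cdot+y_\delta)\to0$ locally uniformly. Your fallback with a limiting constant $c_0$ is unnecessary, and it is not justified under (V2), where the local limit need not be constant.

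Second, the admissibility $\int Vm_0<\infty$ must be obtained from the decay before you use $(m_0,w_0)/t$ as a competitor for $e_0$, not at the end.
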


To show Lemma \ref{Existience_of_MFG_free_potential}, we first analyze the limiting process of  $e_{\delta}$ as $\delta\to 0^{+}$ and establish the following relationship between the energy $e_{0}$ and $e_{\delta}$.
\begin{lemma}\label{re_energy}
Let $(u_{\delta}, m_{\delta},\mu_{\delta})$ be as in Lemma \ref{potential_MFG}. Then, it holds that 
$$\lim\limits_{\delta\to 0^+}e_{\delta}=e_{0},$$
where $e_{\delta}$ and $e_0$ are defined in (\ref{delta_problem}) and \eqref{e_0_min}, respectively. In addition,
\begin{equation}\label{potential_free101}
	\lim_{\delta\to 0^+}\delta \int_{\mathbb{R}^{n}} V(x)m_{\delta}\,dx=0.
\end{equation}
\end{lemma}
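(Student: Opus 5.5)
We prove $\lim_{\delta\to0^+}e_\delta=e_0$ by two inequalities, and then read off \eqref{potential_free101} from the same chain.

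\emph{Monotonicity and the lower bound.} Since $V>0$ and the admissible sets in \eqref{delta_problem} and \eqref{e_0_min} coincide, we have $\mathcal E_\delta(m,w)\ge \mathcal E_0(m,w)$ for every admissible pair, so $e_\delta\ge e_0$. The map $\delta\mapsto e_\delta$ is also nondecreasing, because $\mathcal E_\delta$ is pointwise nondecreasing in $\delta$. Hence $\lim_{\delta\to0^+}e_\delta$ exists and
\[
\lim_{\delta\to0^+}e_\delta\ge e_0 .
\]

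\emph{The upper bound.} This is the main step, since $e_0$ need not be attained yet and admissible pairs for $e_0$ need not have $\int Vm<\infty$ a priori. Membership in $\mathcal A$ does require $\int V m<\infty$, and $V$ is a fixed potential satisfying (V1)--(V2), so in fact every $(m,w)\in\mathcal A$ has $\int Vm\,dx<\infty$.

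Fix $\varepsilon>0$ and choose $(m,w)\in\mathcal A\cap\{\|m\|_{L^{1+\alpha}}=1\}$ with $\mathcal E_0(m,w)\le e_0+\varepsilon$. Then
\[
e_\delta\le \mathcal E_\delta(m,w)=\mathcal E_0(m,w)+\delta\int_{\mathbb R^n}Vm\,dx\le e_0+\varepsilon+\delta\int_{\mathbb R^n}Vm\,dx .
\]
Letting $\delta\to0^+$ and then $\varepsilon\to0$ gives $\limsup_{\delta\to0^+} e_\delta\le e_0$.

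If one prefers not to rely on the $V$-integrability built into $\mathcal A$, for instance if $e_0$ is regarded over a larger class, a truncation argument works instead. Set $m_R=\chi_R m$ and $w_R=\chi_R w+m\nabla\chi_R$ with a cutoff $\chi_R$, so that the Fokker--Planck constraint is preserved. Renormalize in $L^{1+\alpha}$ and use convexity of $(m,w)\mapsto mL(-w/m)$ to see that $\mathcal E_0(m_R,w_R)\to\mathcal E_0(m,w)$. The error terms $\int m|\nabla\chi_R|^{\gamma'}\lesssim R^{-\gamma'}$ vanish, and since $m_R$ has compact support, $\int Vm_R<\infty$.

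\emph{Vanishing of the potential term.} By Lemma \ref{potential_MFG}, $(m_\delta,w_\delta)$ is admissible for $e_0$. Therefore
\[
e_0+\delta\int_{\mathbb R^n}Vm_\delta\,dx\le \mathcal E_0(m_\delta,w_\delta)+\delta\int_{\mathbb R^n}Vm_\delta\,dx=\mathcal E_\delta(m_\delta,w_\delta)=e_\delta ,
\]
so that
\[
0\le\delta\int_{\mathbb R^n}Vm_\delta\,dx\le e_\delta-e_0\to0 .
\]
This proves \eqref{potential_free101}.
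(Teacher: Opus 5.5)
Your argument is correct and is essentially the paper's proof: you get $e_\delta\ge e_0$ from $V\ge 0$, take an $\varepsilon$-almost minimizer of $e_0$ (with $\int Vm<\infty$ because this is built into $\mathcal A$) to obtain $\limsup_{\delta\to0^+}e_\delta\le e_0$, and then use $0\le\delta\int Vm_\delta\,dx=e_\delta-\mathcal E_0(m_\delta,w_\delta)\le e_\delta-e_0$; your monotonicity remark is a harmless addition that makes existence of the limit explicit. You should drop the optional truncation aside, which is unnecessary and, as written, wrong: with $w_R=\chi_R w+m\nabla\chi_R$ one has $\nabla m_R-w_R=\chi_R(\nabla m-w)$, whose divergence $\nabla\chi_R\cdot(\nabla m-w)$ need not vanish, so the Fokker--Planck constraint is not preserved.
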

\begin{proof}
 From the definition of functionals $\mathcal E_{\delta}(m,w)$ and $\mathcal E_0(m,w)$, we see that 
$e_{\delta}\geq e_{0}$ for any $\delta\in(0,1)$.
Moreover, by the definition of $e_{0}$, for any $\epsilon>0$, there exists $(m_{\epsilon},w_{\epsilon})\in \mathcal{A}\cap \left\{m: \Vert m\Vert_{L^{1+\alpha}(\mathbb R^n)}=1 \right\}$ such that
\begin{equation*}
	e_{0}\leq \mathcal{E}_{0}(m_{\epsilon},w_{\epsilon})\leq e_{0}+\epsilon.
\end{equation*}
Now, setting $M_{\epsilon}:=\int_{\mathbb R^n}V(x)m_{\epsilon}\,dx<\infty$,  we deduce  that
\begin{equation}\label{potential_free4}
	e_{\delta}\leq \mathcal{E}_{\delta}(m_{\epsilon},w_{\epsilon})=\mathcal E_0(m_{\epsilon},w_{\epsilon})+\delta \int_{\mathbb R^n} V(x)m_{\epsilon}\,dx \leq e_{0}+\epsilon+\delta M_{\epsilon}.
\end{equation}
Letting $\delta\to 0^+$ then $\epsilon\to 0^+$ in \eqref{potential_free4}, we obtain that
$\lim\limits_{\delta\to 0^+}e_{\delta}\leq e_{0}.$
In summary, we conclude that $\lim\limits_{\delta\to 0^+}e_{\delta}= e_{0}$.  Moreover, since $(m_{\delta}, w_{\delta})$ is the minimizer of $e_{\delta}$, then 
$$0\leq \delta \int_{\mathbb{R}^{n}} V(x)m_{\delta}\,dx=e_{\delta}-\mathcal{E}_{0}(m_{\delta}, w_{\delta})\leq e_{\delta}-e_{0}\to 0\,\text{as}\,\, \delta\to0^+,$$
which implies that \eqref{potential_free101} holds. This ends the proof of the lemma.
\end{proof}
\begin{lemma}\label{exi_potential_free_MFG}
Let $(u_{\delta}, m_{\delta},\mu_{\delta})$ be the solution of \eqref{MFG_mass_supercritical} obtained in Lemma \ref{potential_MFG}. 
Then there exists $\{y_{\delta}\}\in \mathbb{R}^n$ such that, up to a subsequence,
\begin{equation*}
      \left(m_{\delta}(x+y_{\delta}), w_{\delta}(x+y_{\delta})\right)\overset{\delta \to 0^+}\rightharpoonup(m_{0}(x),w_{0}(x))\,\, \text{weakly in}\,\,W^{1,\hat q}( \mathbb{R}^n)\times L^{\hat q}( \mathbb{R}^n),
\end{equation*}
\begin{equation*}
	m_{\delta}(x+y_{\delta})\overset{\delta \to 0^+}\to m_{0}(x) \,\,\text{strongly in }\,\, L^{p}(\mathbb{R}^n), \forall p\in[1+\alpha, \hat q^{*}),
\end{equation*}
and
\begin{equation*}
	u_{\delta}(x+y_{\delta})\overset{\delta \to 0^+}\to u_{0}(x) \,\,\text{strongly in }\,\, C_{loc}^{2,\theta}(\mathbb{R}^n)\,\,\text{for some}\,\, \theta\in(0,1),
\end{equation*}
 where $(m_{0},w_{0})\in\mathcal{A}\cap \left\{m: \Vert m\Vert_{L^{1+\alpha}(\mathbb R^n)}=1 \right\}$, with $w_{0}=- m_{0}\nabla H(\nabla u_0)$ is a minimizer of the energy $e_{0}$. Moreover, $u_{0}\in C^{2}(\mathbb{R}^n)$ is bounded from below, and $(u_{0},m_{0},\mu_{0})$ solves the auxiliary potential-free MFG system \eqref{mass_supercritical_problem_free_potential} for  $\mu_{0}=e_{0}$.	
\end{lemma}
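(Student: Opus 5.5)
The plan is a concentration-compactness argument on the minimizers $(m_\delta,w_\delta)$. Uniform bounds come first. By Lemma \ref{re_energy}, $\mu_\delta=e_\delta\to e_0$, and $\delta\int V m_\delta\to0$. Since $\mathcal E_0(m_\delta,w_\delta)\le e_\delta\le C$, both $\int m_\delta L(-w_\delta/m_\delta)$ and $\int m_\delta$ are uniformly bounded. Lemma \ref{lemma21-crucial} then bounds $\|m_\delta\|_{W^{1,\hat q}}$ and $\|w_\delta\|_{L^{\hat q}}$ uniformly.

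I also need $e_0>0$, i.e.\ no vanishing of the energy. I would get it from the Gagliardo--Nirenberg-type interpolation implicit in Lemma \ref{lemma21-crucial}. There $\|m\|_{L^{1+\alpha}}$ is controlled by powers of $\int mL$ and $\int m$, with exponents such that $\int m_\delta L\to0$ or $\int m_\delta\to0$ would contradict $\|m_\delta\|_{L^{1+\alpha}}=1$. This uses $\alpha<\alpha^*$, so that $1+\alpha$ lies strictly between $1$ and $\hat q^*$.

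Next comes non-vanishing. Since $1<1+\alpha<\hat q^*$ and the sequence is bounded in $L^1\cap W^{1,\hat q}$, Lions' vanishing lemma shows that if $\sup_y\int_{B_1(y)}m_\delta\to0$, then $m_\delta\to0$ in $L^{1+\alpha}$. That is impossible, so there exist $y_\delta$ with $\int_{B_1(y_\delta)}m_\delta\ge c>0$. I translate by $y_\delta$, extract weak limits $(m_0,w_0)$ in $W^{1,\hat q}\times L^{\hat q}$, and obtain $m_0\not\equiv0$. Then I need local compactness for $u$. I would apply Lemma \ref{blowupanalysismlinfboundcritical} with $g_k[m]=m^\alpha$, shifts $y_k=y_\delta$, and bounded $\delta_k$. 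For this I need $u_\delta(\cdot+y_\delta)$ bounded below uniformly; I would normalize at a minimum point of $u_\delta$, which is forced to stay at bounded distance from $y_\delta$ by the argument used in the proof of Lemma \ref{potential_MFG}. The lemma gives $\|m_\delta\|_\infty\le C$. Lemma \ref{sect2-lemma21-gradientu} then gives local gradient bounds, and $W^{2,p}$ plus Schauder estimates as in \eqref{eq4.401} give $C^{2,\theta}_{\rm loc}$ convergence of the translated $u_\delta$. Meanwhile $\delta V(\cdot+y_\delta)\to0$ locally uniformly whenever $|y_\delta|$ stays in a region where $\delta V$ is small, which follows from $\delta\int V m_\delta\to0$ and the mass in $B_1(y_\delta)$, using (V2). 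Passing to the limit, $(u_0,m_0,\mu_0)$ with $\mu_0=e_0$ solves \eqref{mass_supercritical_problem_free_potential}, except that the constraint $\int m_0^{1+\alpha}=1$ is not yet known.

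The main obstacle is excluding dichotomy, i.e.\ proving $\|m_0\|_{L^{1+\alpha}}=1$ and strong convergence. I would use the scaling structure of $e_0$. By the scale invariance in \eqref{Gn_Inequality}, $e_0$ on the constraint $\|m\|_{L^{1+\alpha}}^{1+\alpha}=t$ equals $e_0\,t^{\frac1{1+\alpha}}$, and $t\mapsto t^{1/(1+\alpha)}$ is strictly concave, hence strictly subadditive. Set $\theta=\int m_0^{1+\alpha}\in(0,1]$. Brezis--Lieb splitting gives $\int (m_\delta-m_0)^{1+\alpha}\to1-\theta$, and weak lower semicontinuity together with convexity of $(m,w)\mapsto mL(-w/m)$ gives $\liminf\mathcal E_0(m_\delta,w_\delta)\ge\mathcal E_0(m_0,w_0)+\liminf\mathcal E_0(\text{remainder})$. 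A cutoff splitting in the spirit of Lions controls the cross terms of the Lagrangian part. This yields $e_0\ge e_0(\theta^{1/(1+\alpha)}+(1-\theta)^{1/(1+\alpha)})$, which forces $\theta=1$. As a cleaner alternative, I would test the limit equations via Lemma \ref{poholemma}: $\mathcal E_0(m_0,w_0)=\mu_0\theta$. The constrained bound $\mathcal E_0(m_0,w_0)\ge e_0\theta^{1/(1+\alpha)}$ then gives $\theta\ge\theta^{1/(1+\alpha)}$, so $\theta=1$ directly. With $\theta=1$, strong $L^{1+\alpha}$ convergence follows, interpolation gives strong $L^p$ convergence for $p\in[1+\alpha,\hat q^*)$, and lower semicontinuity shows $(m_0,w_0)$ minimizes $e_0$. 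Finally, $u_0$ is bounded below by the normalization, $m_0\in\mathcal A$, and $w_0=-m_0\nabla H(\nabla u_0)$ by the weak limit of $w_\delta=-m_\delta\nabla H(\nabla u_\delta)$ under local uniform convergence of $\nabla u_\delta$.
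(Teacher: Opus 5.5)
Your overall route is the paper's: uniform bounds from $e_\delta\to e_0$ and Lemma \ref{lemma21-crucial}, non-vanishing by Lions' lemma, translation with $\delta V(\cdot+y_\delta)\to0$ via (V2), the $L^\infty$ bound from Lemma \ref{blowupanalysismlinfboundcritical}, local $C^{2,\theta}$ convergence, and exclusion of dichotomy. Your ``cleaner alternative'' for that last step, $\mathcal E_0(m_0,w_0)=\mu_0\theta\ge e_0\theta^{1/(1+\alpha)}$, is exactly the paper's argument. Your proof that $e_0>0$ supplies a fact the paper uses tacitly.

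Drop the primary subadditivity route. The map $(m,w)\mapsto mL(-w/m)$ is convex and positively $1$-homogeneous, hence subadditive, so convexity gives the \emph{reverse} of the splitting inequality you claim. Moreover $m_\delta-m_0$ need not be nonnegative, so the remainder is not admissible and has energy $+\infty$. Cutoff splittings instead break the Fokker--Planck constraint in $\mathcal A$.

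The genuine gap is the lower bound on $u_0$. Your $\theta=1$ step uses Lemma \ref{poholemma}, which sits under the hypotheses of Lemma \ref{mdecaylemma}. That requires $u_0$ bounded below and $m_0\in W^{1,p}$ for some $p>n$; the latter follows from the global gradient bound of Lemma \ref{sect2-lemma21-gradientu}, which you should state. The resulting exponential decay is also what gives $\int Vm_0<\infty$, i.e.\ $(m_0,w_0)\in\mathcal A$. You need that before you can use $\mathcal E_0(m_0,w_0)\ge e_0\theta^{1/(1+\alpha)}$.

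You get the lower bound by claiming that a minimum point $x_\delta$ of $u_\delta$ stays at bounded distance from $y_\delta$ ``by the argument of Lemma \ref{potential_MFG}''. That argument only gives $\mu_\delta m_\delta^\alpha(x_\delta)\ge 1+\delta V(x_\delta)$, hence $\delta V(x_\delta)\le C$. At fixed $\delta$, coercivity of $V$ then localizes $x_\delta$, but as $\delta\to0$ it gives nothing. Before dichotomy is excluded, $x_\delta$ could sit near a different lump of mass than $y_\delta$. With the normalization $u_\delta(y_\delta)=0$ you only get $\bar u_\delta\ge-(u_\delta(y_\delta)-\min u_\delta)$, which may diverge.

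The fix is to take $y_\delta:=x_\delta$, which exists since $u_\delta\to+\infty$ by \eqref{eq4.46}.
\begin{itemize}
\item First apply Lemma \ref{blowupanalysismlinfboundcritical} with the normalization $\min u_\delta=0$. The uniform lower bound is then trivial, and the $L^\infty$ bound on $m_\delta$ does not depend on the additive constant.
\item Then $m_\delta(x_\delta)\ge\mu_\delta^{-1/\alpha}$ and $\delta V(x_\delta)\le C$. By (V2) and Lemma \ref{sect2-lemma21-gradientu}, $\nabla u_\delta$ is uniformly bounded on $B_2(x_\delta)$.
\item The Fokker--Planck equation then gives uniform H\"older bounds on $m_\delta$ there, so $m_\delta\ge c$ on a fixed ball around $x_\delta$.
\end{itemize}
This single choice yields non-vanishing (making Lions' lemma unnecessary), Claim B via $\delta\int Vm_\delta\to0$, and $\bar u_\delta\ge0=\bar u_\delta(0)$, hence $u_0\ge0$. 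The paper's own proof is silent on this point, so your patch also fills a hole there.
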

\begin{proof}
Recalling that $(m_{\delta},w_{\delta}) \in \mathcal{A}\cap \left\{m: \Vert m\Vert_{L^{1+\alpha}(\mathbb R^n)}=1\right\}$, where $w_{\delta}=- m_{\delta}\nabla H(\nabla u_{\delta})$  is a minimizer of $e_{\delta}$, and the triple $(u_{\delta}, m_{\delta}, \mu_{\delta})$ solves the system \eqref{MFG_mass_supercritical}. Then, by Lemma \ref{potential_MFG} and Lemma \ref{re_energy}, we get
\begin{equation}\label{potential_free7_1}
	\begin{aligned}
		\mu_{0}:=\lim_{\delta\to0^{+}}\mu_{\delta}=\lim_{\delta\to0^{+}}e_{\delta}= e_{0}.
	\end{aligned}
\end{equation}
Hence, there exists $C>0$ independent of $\delta\in(0,1)$ such that
\begin{equation*}
|\mu_{\delta}|=|e_{\delta}|\leq C,\,\text{and thus} \,\, \int_{\mathbb R^n}m_{\delta}L\left(-\frac{w_{\delta}}{m_{\delta}}\right)\, dx+\int_{\mathbb R^n}(\delta V(x)+1)m_{\delta}\,dx\leq C.
\end{equation*}
As a consequence, we apply Lemma \ref{lemma21-crucial} and (\ref{MFG-L}) to obtain that 
\begin{equation}\label{potential_free7_3}
\Vert m_{\delta}\Vert_{W^{1,\hat q}(\mathbb R^n)},\, \Vert w_{\delta}\Vert_{L^{p}(\mathbb R^n)}\leq C\, \text{for any}\,\, p\in [1,\hat q],
\end{equation}
where $C>0$ is some constant independent of $\delta\in(0,1)$.

In order to complete the proof of this lemma, it is necessary to prove the following claims: 

\noindent\textbf{Claim A:} There exist $\zeta>0$, $R>0$ and $y_{\delta}\in\mathbb{R}^n$ such that
\begin{equation}\label{potential_free8}
\liminf_{\delta\to0^{+}}\int_{B_{R}(y_{\delta})} m_{\delta}(x)\,dx>\zeta>0.
\end{equation}
Indeed, if \eqref{potential_free8} is not true, then by the Lions' lemma    \cite[Lemma \uppercase\expandafter{\romannumeral 1}.1]{Lions1984Ann}, we obtain that
\begin{equation*}
\lim_{\delta\to 0^{+}}	\int_{\mathbb{R}^{n}}m_{\delta}^{1+\alpha}\,dx=0,
\end{equation*}
which contradicts the fact that $\int_{\mathbb{R}^{n}}m_{\delta}^{1+\alpha}\,dx=1$ for any $\delta\in(0,1)$. Therefore, we conclude that \textbf{Claim A} holds.  

\noindent\textbf{Claim B:} We claim that 
\begin{equation*}
	\lim_{\delta\to 0^{+}}\delta V(y_{\delta})=0.
\end{equation*}
In fact, if $\lim\limits_{\delta\to 0^{+}}|y_{\delta}|<+\infty$, then obviously \textbf{Claim B} holds. If $\lim\limits_{\delta\to 0^{+}}|y_{\delta}|=+\infty$, we then deduce from \eqref{potential_free101},  \eqref{potential_free8} and (\ref{V2mainassumption_2}) that there exists constant $C>0$ such that
\begin{equation*}
\zeta V\big({y_{\delta}}\big)\delta\leq V\big({y_{\delta}}\big)\delta\int_{B_{R}(y_{\delta})}m_{\delta}\,dx\leq \delta C\int_{B_{R}(y_{\delta})}V(x)m_{\delta}\,dx\leq \delta C\int_{\mathbb{R}^n} V(x)m_{\delta}\,dx\to 0\ \text{ as }\ \delta\to 0,
\end{equation*}
which indicates that  the \textbf{Claim B} holds. 

Now, we define  
\begin{equation}\label{potential_free14}
\bar{m}_{\delta}(x):=m_{\delta}(x+y_{\delta})\,,~  \bar{u}_{\delta}(x):=u_{\delta}(x+y_{\delta})\,\, \text{and}\,\, \bar w_{\delta}:=- \bar m_{\delta}H(\nabla\bar u_{\delta}),
\end{equation}
then it follows from \eqref{MFG_mass_supercritical} that $(\bar{m}_{\delta}(x),\bar{u}_{\delta}(x))$ satisfies
\begin{align}\label{MFG_mass_supercritical6_translated}
	\left\{\begin{array}{ll}
		-\Delta \bar u_{\delta}+H(\nabla \bar u_{\delta})+\mu_{\delta} \bar m_{\delta}^{\alpha}=\delta V(x+y_{\delta})+1,&x\in\mathbb R^n,\\
		\Delta \bar m_{\delta}+\nabla\cdot (\bar m_{\delta}\nabla H(\nabla\bar u_{\delta}))=0,&x\in\mathbb R^n,\\
		\int_{\mathbb R^n}\bar m_{\delta}^{1+\alpha}\,dx=1.
	\end{array}
	\right.
\end{align}

Similarly as shown in the proof of Lemma \ref{potential_MFG}, we now estimate $\nabla \bar u_{\delta}$ and rewrite $u$-equation of \eqref{MFG_mass_supercritical6_translated} as
\begin{equation*}
    -\Delta \bar u_{\delta}=-H(\nabla \bar u_{\delta})+\hat f_{\delta}(x),\,\text{where}\,\, \hat f_{\delta}(x):=-\mu_{\delta} \bar m_{\delta}^{\alpha}+\delta V( x+y_{\delta})+1.
\end{equation*}
We first obtain from \textbf{Claim B} that
\begin{equation}\label{potential_tra}
\lim\limits_{\delta\to0^+}\delta V(x+y_{\delta})=0\,\,\text{a.e. in }\, \mathbb{R}^n.
\end{equation}
{When $\gamma' \leq n$,} since $\bar{m}_{\delta}^{\alpha} \in L^{1+\frac{1}{\alpha}}(\mathbb{R}^n)$, by utilizing the maximal regularity shown in Lemma \ref{thmmaximalregularity},  we get $|\nabla \bar{u}_{\delta}|^{\gamma-1} \in L_{\mathrm{loc}}^{(1+\frac{1}{\alpha})\gamma'}(\mathbb{R}^n)$ with $(1+\frac{1}{\alpha})\gamma' > n$.  Thus, invoking Theorem 1.7.4 in \cite{bogachev2022fokker} and Morrey's embedding, we have from the $m$-equation in \eqref{MFG_mass_supercritical6_translated} that $\bar m_{\delta}\in C^{0,\theta}_{loc}(\mathbb{R}^n)$ for some $\theta\in(0,1)$.{ When $\gamma' > n$, the local H\"{o}lder regularity of $\bar m_{\delta}$  follows directly from the Sobolev embedding theorem, since $\bar m_{\delta}\in W^{1,\hat q}(\mathbb{R}^n)$ with $\hat q>n$.}  Applying Theorem A.1 in \cite{Lasry1989Lions} together with the argument used in the proof of Theorem 2.5 in \cite{cesaroni2018concentration}, we further obtain 
$|\nabla \bar u_{\delta}|\leq C_{R}<+\infty\,\,\text{for any}\,\,|x|<2R$. Therefore, one has
$$\Vert H(\nabla \bar u_{\delta})\Vert_{L^{p}(B_{2R}(0))}+\Vert \hat f_{\delta}\Vert_{L^{p}(B_{2R}(0))}\leq C_{p,R}<+\infty.$$ 
Then, proceeding the similar argument shown in the proof of \eqref{eq4.401}, we get that
\begin{equation*}
\|\bar u_{\delta}\|_{C^{2,\bar\theta_1}(B_{R}(0))} \leq C_{\bar\theta_1,R}<\infty \text{ for some $\bar \theta_1\in(0,1)$,}
\end{equation*}
which implies that there exists $u_{0}\in C^2(\mathbb R^n)$ such that
\begin{equation}\label{eq4.421}
\bar u_{\delta}\overset{\delta\to 0^+}\longrightarrow u_{0} \text{ in }C^{2,\bar\theta_1}_{\rm loc}(\mathbb{R}^n)\text{ for some $\bar\theta_1\in(0,1)$.}
\end{equation}
On the other hand, from \eqref{potential_free7_3} and \eqref{potential_free14}, we see that there exists $(m_{0},w_{0})\in W^{1,\hat q}(\mathbb R^n)\times \big(L^{1}(\mathbb R^n)\cap L^{\hat q}(\mathbb R^n) \big)$ such that
\begin{align}\label{eq4.431}
\bar m_{\delta}\overset{\delta\to0^+}\to m_{0} \text{ a.e. in $\mathbb R^n$,  \ and \ \ }(\bar m_{\delta},\bar w_{\delta})\overset{\delta\to 0^+}\rightharpoonup (m_{0},w_{0}) \text{~weakly in~}  W^{1,\hat{q}}(\mathbb R^n)\times L^{\hat q}(\mathbb R^n),
\end{align}
where $m_{0}\not\equiv  0$ due to \eqref{potential_free8}.  {
Then, applying Lemma \ref{blowupanalysismlinfboundcritical} to \eqref{MFG_mass_supercritical6_translated}, we obtain $\sup\limits_{\delta}\Vert \bar m_{\delta}\Vert_{L^\infty(\mathbb R^n)}\leq C$ for some constant $C>0$, which indicates $m_0 \in{L^\infty}(\mathbb R^n)$}.  Therefore, combining \eqref{potential_free7_1}, \eqref{MFG_mass_supercritical6_translated},  \eqref{potential_tra}, \eqref{eq4.421} and \eqref{eq4.431}, we apply a diagonal argument to conclude that $(u_{0},m_{0}, \mu_{0})\in C^{2}(\mathbb R^n)\times W^{1,\hat{q}}(\mathbb R^n)\times\mathbb{R} $ with $\mu_{0}=e_{0}$ satisfies
\begin{align}\label{MFG_mass_supercritical23}
\left\{\begin{array}{ll}
	-\Delta u_0+H(\nabla u_0)+\mu_{0} m_{0}^{\alpha}=1,&x\in\mathbb R^n,\\
	-\Delta m_{0}+\nabla\cdot w_{0}=0,&x\in\mathbb R^n,\\
    w_{0}=-m_{0}\nabla H(\nabla u_0).
\end{array}
\right.
\end{align}
In view of \textbf{Claim A} and Fatou's Lemma, one finds that
\begin{align}\label{exponentialdecaym123}
0<\int_{\mathbb R^n}m_{0}^{1+\alpha}\,dx\leq 1.
\end{align}

Next, we show that $(m_{0}, w_{0})\in\mathcal{A}\cap \left\{m: \Vert m\Vert_{L^{1+\alpha}(\mathbb R^n)}=1 \right\}$ is a minimizer of $e_{0}$. Indeed, invoking Lemma \ref{sect2-lemma21-gradientu} and $m_0\in L^\infty(\mathbb R^n)$, we have $\Vert\nabla u_0\Vert_{L^\infty(\mathbb R^n)}\leq C $ for some positive constant $C.$  Then, by the standard elliptic regularity, we get $(u_{0},m_{0})\in C^2(\mathbb R^n)\times W^{1,p}(\mathbb R^n)$ for any $p>1$. Moreover, from Lemma \ref{mdecaylemma} it follows that
\begin{align}\label{exponentialdecaym1234}
	m_{0}(x)\leq C_1 e^{-C_2|x|}  ~\text{ for all } x\in \mathbb R^n,
\end{align}
where $C_1,C_2>0$ are some constants. Then, we have
\begin{align*}
	\int_{\mathbb{R}^n} V(x)m_{0}\,dx< \infty . 
\end{align*}
Given this and that $(m_0, w_0)$ satisfies \eqref{MFG_mass_supercritical23}, we conclude that $(m_0, w_0) \in \mathcal{A}$. Moreover, thanks to \eqref{exponentialdecaym1234}, we apply Pohozaev identities given in Lemma \ref{poholemma} and \eqref{potential_free7_1} to obtain
\begin{equation*}
    \frac{ n \alpha}{(\alpha+1)\gamma'-n\alpha}\int_{\mathbb R^n}m_{0}\, dx=\int_{\mathbb R^n}m_{0}L\left(-\frac{w_{0}}{m_{0}}\right)\, dx=\frac{e_{0} n \alpha}{(\alpha+1)\gamma'}\int_{\mathbb R^n}m_{0}^{1+\alpha}\, dx.
\end{equation*}
 On the other hand,  since $(m_{0}, w_{0})\in\mathcal{A}$, one gets from the definition of $e_0$ and the above two estimates that  
 
\begin{equation*}
 e_{0}\leq\frac{\int_{\R^n} m_{0} L\left (-\frac{w_{0}}{m_{0}}\right) \, dx+\int_{\R^n}m_{0}\,dx}{      \left(\int_{\mathbb{R}^n}m_{0}^{1+\alpha}\,dx \right)^{\frac {1} {1+\alpha}  }  }=e_0\left(\int_{\mathbb{R}^n}m_{0}^{1+\alpha}\,dx \right)^{\frac {\alpha} {1+\alpha}  }.
 \end{equation*}
This indicates that  
 \begin{equation*}
 \int_{\mathbb{R}^n}m_{0}^{1+\alpha}\,dx\geq 1.
 \end{equation*}
From this and \eqref{exponentialdecaym123}, we get 
 $ \int_{\mathbb{R}^n}m_{0}^{1+\alpha}\,dx=1.$ Hence, the triple $(u_{0},m_{0},\mu_{0})$ solves \eqref{mass_supercritical_problem_free_potential}. In addition, we obtain $\bar m_{\delta}\to m_{0}$ in $L^{1+\alpha}(\mathbb{R}^n)$ as $\delta\to 0^{+}$. And by \eqref{eq4.431}, one has $\bar m_{\delta}\to m_{0} \,\,\text{strongly in }\,\, L^{p}(\mathbb{R}^n),\,\,\forall p\in[1+\alpha, \hat q^{*})$, as $\delta \to 0^+$.
{ Finally, due to \eqref{potential_free14} and \eqref{eq4.431}, we get from Lemma \ref{re_energy} and Fatou's lemma that
\begin{equation*}
 \begin{aligned}
	e_{0}=\lim_{\delta\to 0^{+}}e_{\delta}&=\lim_{\delta\to 0^{+}}\left\{\int_{\mathbb R^n} m_{\delta}L\left(-\frac{ w_{\delta}}{m_{\delta}}\right)\, dx+\int_{\mathbb R^n} \left(\delta V(x)+1\right)m_{\delta}\,dx\right\}\\
    &\geq\lim_{\delta\to 0^{+}}\left\{\int_{\mathbb R^n} m_{\delta}L\left(-\frac{ w_{\delta}}{m_{\delta}}\right)\, dx+\int_{\mathbb R^n} m_{\delta}\,dx\right\}\\
    &=\lim_{\delta\to 0^{+}}\left\{\int_{\mathbb R^n}\bar m_{\delta}L\left(-\frac{\bar w_{\delta}}{\bar m_{\delta}}\right)\, dx+\int_{\mathbb R^n}\bar m_{\delta}\,dx\right\}\\
    &\geq \int_{\mathbb R^n}m_{0}L\left(-\frac{w_{0}}{m_{0}}\right)\, dx+\int_{\mathbb R^n}m_{0}\,dx=\mathcal{E}_{0}(m_{0},w_{0})\geq e_{0},
 \end{aligned}
\end{equation*}
which indicates that $(m_{0}, w_{0})\in\mathcal{A}\cap \left\{m: \Vert m\Vert_{L^{1+\alpha}(\mathbb R^n)}=1 \right\}$ is a minimizer of $e_{0}$. }
\end{proof}

\textbf{Proof of Lemma \ref{Existience_of_MFG_free_potential}:}  With the aid of Lemma \ref{re_energy} and \ref{exi_potential_free_MFG}, we readily obtain the conclusion of Lemma \ref{Existience_of_MFG_free_potential}  { except for the positivity of  $m_{0}$ and $\mu_{0}$. In fact, by applying the classical maximum principle and the Pohozaev identity, we can prove that $m_{0}>0$ and $\mu_{0}>0$, respectively.}
$\hfill\qed$

\medskip

Lemma \ref{Existience_of_MFG_free_potential} implies that the potential-free auxiliary MFG system \eqref{mass_supercritical_problem_free_potential} admits a classical solution. Based on this and the previous results, we can now discuss the existence of mountain-pass solutions to system~\eqref{L^1_constrain_equation} and conclude the proof of Theorem~\ref{Existience_of_MPS_MFG} and Theorem \ref{GNinequalitythm}.
{

\section{Existence of Mountain-Pass Solutions to Potential-Free MFG Systems }\label{mountain_pass_solution_MFG}
In this section, we shall prove Theorem \ref{Existience_of_MPS_MFG} and Theorem \ref{GNinequalitythm}. In detail, we first aim to establish the Gagliardo-Nirenberg type inequality (\ref{Gn_Inequality}).  Then, we show that, the optimizers of (\ref{Gn_Inequality}) are, in fact, equivalent to those of the minimization problem~\eqref{e_0_min}.  Finally, in the mass-supercritical case, we prove that the optimizers of the Gagliardo-Nirenberg type inequality also attain the  minimax problem
\eqref{constrain_mountain_pass__problem}.  Combining these results with Lemma \ref{Existience_of_MFG_free_potential}, one obtains the conclusion in Theorem \ref{Existience_of_MPS_MFG} holds by using the scaling argument.  Finally, we show that (\ref{Gn_Inequality}) under the mass-supercritical regime is attained by the mountain-pass solution given in Theorem \ref{Existience_of_MPS_MFG}, which proves Theorem \ref{GNinequalitythm}.

\subsection{Gagliardo-Nirenberg type inequality: potential-free MFG systems}\label{sect3-optimal} 
This subsection is devoted to the existence of minimizers for problem~\eqref{Gn_Inequality} and their relationship with the minimizers of~\eqref{e_0_min}.  In section \ref{MFG_sys}, we have obtained that the minimization problem \eqref{e_0_min} is attained by some pair $(m_0,w_0)$  and there exist $u_0$ and $\mu_{0}$ such that $(u_{0}, m_{0},\mu_{0})$ solves \eqref{mass_supercritical_problem_free_potential}.

We next prove the equivalence between minimization problems \eqref{e_0_min} and problem \eqref{Gn_Inequality}, which is 
\begin{lemma}\label{sect3_lemm32}
	Assume that $\gamma'>1$ and $\alpha\in\left(0,\alpha^{*}\right)$. Let $(m_0,w_0)$ be the minimizer for $e_0$ given in \eqref{e_0_min}  obtained by Lemma \ref{Existience_of_MFG_free_potential}. Then $(m_0,w_0)$ also minimizes the problem \eqref{Gn_Inequality}.  Moreover, there holds
	\begin{align}\label{sect3-relation-Calpha-emalpha}
		\Gamma_{\alpha}=\left(\frac{n\alpha}{(1+\alpha)\gamma'-n\alpha}\right)^{\frac{n\alpha}{\gamma'}}\left(\frac{(1+\alpha)\gamma'-n\alpha}{(1+\alpha)\gamma'}  e_{0}\right)^{1+\alpha}.
	\end{align}
\end{lemma}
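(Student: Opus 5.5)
We work with the normalized form \eqref{Gn_Inequality_constrain} of $\Gamma_\alpha$. Since $\Gamma_\alpha$ is scale invariant, it suffices to compare $e_0$ and $\Gamma_\alpha$ on the constraint set $\|m\|_{L^{1+\alpha}}=1$. For admissible $(m,w)$ with $\|m\|_{L^{1+\alpha}}=1$, write $A:=\int mL(-w/m)\,dx$, $B:=\int m\,dx$, $a:=\frac{n\alpha}{\gamma'}$ and $b:=\frac{(1+\alpha)\gamma'-n\alpha}{\gamma'}$. Then $a+b=1+\alpha$, and $b>0$ because $\alpha<\alpha^*$ (this is the only place the upper bound on $\alpha$ enters; it gives $n\alpha<(1+\alpha)\gamma'$).

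The plan has three steps.

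\textbf{Step 1: scaling.} Use the mass-preserving-in-$L^{1+\alpha}$ dilation
$$(m_t,w_t)(x):=\big(t^{\frac{n}{1+\alpha}}m(tx),\ t^{\frac{n}{1+\alpha}+1}w(tx)\big).$$
This stays in $\mathcal A\cap\{\|m\|_{L^{1+\alpha}}=1\}$; the decay and potential integrability are preserved because $V$ satisfies \eqref{V2mainassumption_3}, which gives $\int V(x)m(tx)\,dx<\infty$ for $t\ge 1$, and for $t<1$ one can use \eqref{V2mainassumption_1} together with the exponential decay of the relevant optimizers. Homogeneity of $L$ of degree $\gamma'$ gives
$$\mathcal E_0(m_t,w_t)=t^{p}A+t^{-s}B,\qquad p:=\gamma'-\frac{n\alpha}{1+\alpha}>0,\quad s:=\frac{n\alpha}{1+\alpha}.$$
Minimizing over $t>0$ yields
$$\inf_{t}\mathcal E_0(m_t,w_t)=c_{n,\alpha,\gamma'}\,A^{\frac{s}{p+s}}B^{\frac{p}{p+s}}=c\,\big(A^aB^b\big)^{\frac{1}{1+\alpha}}\cdot\frac{1}{\,\cdot\,}$$
up to the power bookkeeping. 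Indeed $\frac{s}{p+s}=\frac{n\alpha}{(1+\alpha)\gamma'}=\frac{a}{1+\alpha}$ and $\frac{p}{p+s}=\frac{b}{1+\alpha}$. Therefore
$$e_0=c\,\Gamma_\alpha^{\frac{1}{1+\alpha}},\qquad c=\Big(\frac{p+s}{p}\Big)\Big(\frac{p}{s}\Big)^{\frac{s}{p+s}}\ \text{(explicit)}.$$
This requires that the admissible classes for $e_0$ and $\Gamma_\alpha$ coincide on the sphere, which holds since both are defined on the same $\mathcal A\cap\{\|m\|_{L^{1+\alpha}}=1\}$.

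\textbf{Step 2: the minimizer attains $\Gamma_\alpha$.} Since $(m_0,w_0)$ minimizes $e_0$, the map $t\mapsto\mathcal E_0((m_0)_t,(w_0)_t)$ is minimized at $t=1$. Hence $(m_0,w_0)$ attains $c\,(A^aB^b)^{1/(1+\alpha)}=e_0$, so it also attains $\Gamma_\alpha$.

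\textbf{Step 3: the explicit constant.} Rather than carrying the constant $c$ through, I would compute \eqref{sect3-relation-Calpha-emalpha} directly from Lemma~\ref{poholemma} applied to $(u_0,m_0,\mu_0)$ with $\mu_0=e_0$ and $\int m_0^{1+\alpha}\,dx=1$. This gives
$$B=\frac{(1+\alpha)\gamma'-n\alpha}{(1+\alpha)\gamma'}\,e_0,\qquad A=\frac{n\alpha}{(1+\alpha)\gamma'}\,e_0=\frac{n\alpha}{(1+\alpha)\gamma'-n\alpha}\,B.$$
Substituting,
$$\Gamma_\alpha=A^aB^b=\Big(\frac{n\alpha}{(1+\alpha)\gamma'-n\alpha}\Big)^{a}B^{a+b},$$
with $a+b=1+\alpha$, which is exactly \eqref{sect3-relation-Calpha-emalpha}.

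The main obstacle is admissibility of the dilations when $t<1$, i.e.\ $\int V(x)m(tx)\,dx<\infty$ for a generic admissible $m$, since \eqref{V2mainassumption_3} only controls $V(\nu x)$ for $\nu\le 1$. I would circumvent this by never dilating general competitors with $t<1$. For the inequality $\Gamma_\alpha\ge$ value at $(m_0,w_0)$, I would argue instead that for any competitor $(m,w)$, $e_0\le\mathcal E_0(m_t,w_t)$ is needed only at the optimal $t$. If that $t<1$, I would first approximate $(m,w)$ by truncated or compactly supported admissible pairs (cutoff $m\chi_R$ with adjusted $w$), for which every dilation is admissible, and then pass to the limit in $A$ and $B$.
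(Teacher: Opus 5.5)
Your argument is the paper's: dilate inside $\mathcal{A}\cap\{\|m\|_{L^{1+\alpha}}=1\}$, minimize $t^{p}A+t^{-s}B$ in $t$ to get $e_0=c\,\Gamma_\alpha^{1/(1+\alpha)}$, and squeeze with $(m_0,w_0)$. The only difference is cosmetic: you extract the explicit value of $\Gamma_\alpha$ from the Pohozaev identities rather than from the constant $c$, which is equivalent (the stationarity $pA_0=sB_0$ at $t=1$ already gives it without PDE input). Your observation that $\sup_{\nu\in[0,1]}V(\nu x)\le C_2V(x)$ only secures $\int V m_t\,dx<\infty$ for $t\ge 1$ is a fair point the paper passes over silently. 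Be aware, though, that the repair you sketch is not a plain cutoff: $(\chi_R m,\chi_R w+m\nabla\chi_R)$ fails the Fokker--Planck constraint by $\nabla\chi_R\cdot(w-\nabla m)$, since $w-\nabla m$ is merely divergence-free. An actual correction term, with control of $\int mL(-w/m)\,dx$, would have to be built.
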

\begin{proof}
	Since problem \eqref{Gn_Inequality} and \eqref{Gn_Inequality_constrain} are equivalent, we just need to show that $(m_{0}, w_{0})$ optimizes problem \eqref{Gn_Inequality_constrain}.
    To begin with, we set
	\begin{align*}
		G_{\alpha}(m,w):=\left(\int_{\mathbb R^n}mL\left(-\frac{w}{m}\right)\, dx\right)^{\frac{n\alpha}{\gamma'}}\left(\int_{\mathbb R^n}m\,dx\right)^{\frac{(\alpha+1)\gamma'-n\alpha}{\gamma'}},
	\end{align*}
	then, one has
	\begin{align}\label{sect3-Calpha-equivalent}
		\Gamma_{\alpha}=\inf_{(m,w)\in\mathcal{A}\cap \left\{m: \Vert m\Vert_{L^{1+\alpha}(\mathbb R^n)}=1 \right\}}G_{\alpha}(m,w).
	\end{align}
	
	For any $(m,w)\in\mathcal{A}\cap \left\{m: \Vert m\Vert_{L^{1+\alpha}(\mathbb R^n)}=1 \right\}$ satisfying $\int_{\mathbb R^n}mL\left(-\frac{w}{m}\right)\, dx<\infty$.
	Let $(m^{t}(x),w^{t}(x)):=(t^{\frac{n}{1+\alpha}} m(tx),t^{1+\frac{n}{1+\alpha}}w(tx))$ for $t\in\mathbb R^+$, then $(m^{t}(x),w^{t}(x))\in \mathcal{A}\cap \left\{m: \Vert m\Vert_{L^{1+\alpha}(\mathbb R^n)}=1 \right\}$. By simple computations, we have 
	\begin{align}\label{sect3-311-mathcalE0}
		\mathcal E_0(m^{t},w^{t})=&t^{\frac{(1+\alpha)\gamma'-n\alpha}{1+\alpha}}\int_{\mathbb R^n}mL\left(-\frac{w}{m}\right)\, dx+t^{-\frac{n\alpha}{1+\alpha}}\int_{\mathbb R^n}m\, dx\nonumber\\
		\geq&\left(\frac{(1+\alpha)\gamma'}{(1+\alpha)\gamma'-n\alpha}\right) \left(\frac{(1+\alpha)\gamma'-n\alpha}{n\alpha}\right)^{\frac{n\alpha}{(1+\alpha)\gamma'}} \left(\int_{\mathbb R^n}m\,dx\right)^{\frac{(1+\alpha)\gamma'-n\alpha}{(1+\alpha)\gamma'}}\left(\int_{\mathbb R^n}mL\left(-\frac{w}{m}\right)\, dx\right)^{\frac{n\alpha}{(1+\alpha)\gamma'}},
	\end{align}
	where the ``=" holds if and only if
	\begin{align*}
{t}=\bar{t}:=\left(\frac{n\alpha\int_{\mathbb R^n}m\, dx}{\left((\alpha+1)\gamma'-n\alpha\right)\int_{\mathbb R^n}mL\left(-\frac{w}{m}\right)\,dx}\right)^{\frac{1}{\gamma'}}.
	\end{align*}
	Recalling the definition of $e_{0}$ defined in \eqref{e_0_min}, we find from (\ref{sect3-311-mathcalE0}) that
	\begin{align}\label{513-estimate-sect3}
		e_{0}\leq {\mathcal{E}}_{0}(m^{\bar t}, w^{\bar t}) =\left(\frac{(1+\alpha)\gamma'}{(1+\alpha)\gamma'-n\alpha}\right) \left(\frac{(1+\alpha)\gamma'-n\alpha}{n\alpha}\right)^{\frac{n\alpha}{(1+\alpha)\gamma'}} \left(G_{\alpha}(m,w)\right)^{\frac{1}{1+\alpha}}.
	\end{align}
	Taking the infimum over $ \mathcal{A}\cap \left\{m: \Vert m\Vert_{L^{1+\alpha}(\mathbb R^n)}=1 \right\} $ in \eqref{513-estimate-sect3} and by using the fact $(m_{0},w_{0})\in \mathcal{A}\cap \left\{m: \Vert m\Vert_{L^{1+\alpha}(\mathbb R^n)}=1 \right\}$, we deduce that
	\begin{align}\label{313-estimate-sect3}
		e_{0}&\leq \left(\frac{(1+\alpha)\gamma'}{(1+\alpha)\gamma'-n\alpha}\right) \left(\frac{(1+\alpha)\gamma'-n\alpha}{n\alpha}\right)^{\frac{n\alpha}{(1+\alpha)\gamma'}} \left(\Gamma_{\alpha}\right)^{\frac{1}{1+\alpha}}\nonumber\\
        &\leq \left(\frac{(1+\alpha)\gamma'}{(1+\alpha)\gamma'-n\alpha}\right) \left(\frac{(1+\alpha)\gamma'-n\alpha}{n\alpha}\right)^{\frac{n\alpha}{(1+\alpha)\gamma'}} \left(G_{\alpha}(m_{0},w_{0})\right)^{\frac{1}{1+\alpha}}.
	\end{align}
    On the other hand, by taking $(m,w)=(m_{0},w_{0})$ and $t=1$ in \eqref{sect3-311-mathcalE0}, we get
	\begin{align}\label{514-estimate-sect3}
		e_{0}=\mathcal E_0(m_{0},w_{0})\geq\left(\frac{(1+\alpha)\gamma'}{(1+\alpha)\gamma'-n\alpha}\right) \left(\frac{(1+\alpha)\gamma'-n\alpha}{n\alpha}\right)^{\frac{n\alpha}{(1+\alpha)\gamma'}} \left(G_{\alpha}(m_{0},w_{0})\right)^{\frac{1}{1+\alpha}}.
	\end{align}
    As a result, we obtain that 
\begin{align}\label{314-estimate-sect3}
		e_{0}\geq\left(\frac{(1+\alpha)\gamma'}{(1+\alpha)\gamma'-n\alpha}\right) \left(\frac{(1+\alpha)\gamma'-n\alpha}{n\alpha}\right)^{\frac{n\alpha}{(1+\alpha)\gamma'}} \left(\Gamma_{\alpha}\right)^{\frac{1}{1+\alpha}}.
	\end{align}
	Collecting \eqref{513-estimate-sect3}-\eqref{314-estimate-sect3}, we conclude that \eqref{sect3-Calpha-equivalent} is attained by $(m_{0},w_{0})$ and \eqref{sect3-relation-Calpha-emalpha} holds.  
\end{proof}

Based on Lemma \ref{sect3_lemm32}, we next investigate the existence of mountain-pass solution to MFG system \eqref{L^1_constrain_equation} and prove   Theorem \ref{Existience_of_MPS_MFG}.

\subsection{Mountain-pass solutions under the mass-supercritical regime}\label{mountain_pass_geometryMFG}
{Before proving Theorem \ref{Existience_of_MPS_MFG} via Lemma \ref{sect3_lemm32}, we first establish the following key lemma on the existence of solutions to MFG system \eqref{L^1_constrain_equation}.  Indeed, with the aid of Lemma  \eqref{Existience_of_MFG_free_potential}, we apply the scaling argument and obtain   }
\begin{lemma}\label{resolMFG}
For every $p>1$, there exists a classical solution $(\bar u_0,\bar m_0,\bar\lambda_0)\in C^2(\mathbb R^n)\times W^{1,p}(\mathbb R^n)\times\mathbb R$ to system \eqref{L^1_constrain_equation}.
    In addition, the pair  $(\bar m_{0},\bar w_{0})\in \mathcal{A}\cap \left\{m: \Vert m\Vert_{L^{1}(\mathbb R^n)}=M \right\}$ with $\bar w_{0}=- \bar m_{0}\nabla H(\nabla \bar u_{0})$ is a minimizer of \eqref{Gn_Inequality}.
\end{lemma}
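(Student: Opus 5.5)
The plan is to obtain $(\bar u_0,\bar m_0,\bar\lambda_0)$ by an explicit two-parameter rescaling of the solution $(u_0,m_0,\mu_0)$ of the auxiliary potential-free system \eqref{mass_supercritical_problem_free_potential} given by Lemma \ref{Existience_of_MFG_free_potential}. Recall that this solution has $\mu_0=e_0>0$ and that $(m_0,w_0)$ minimizes $e_0$. Throughout, $\alpha\in(\alpha_*,\alpha^*)$ as in this section; the only place this is used is to guarantee $\alpha\neq\alpha_*$.

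\textbf{The ansatz.} I set
\begin{align*}
\bar m_0(x)=a\,m_0(bx),\qquad \bar u_0(x)=c\,u_0(bx),\qquad a,b,c>0.
\end{align*}
By the $\gamma$-homogeneity in (H1), $H(\nabla\bar u_0)=(cb)^{\gamma}H(\nabla u_0)(bx)$ and $\nabla H(\nabla\bar u_0)=(cb)^{\gamma-1}\nabla H(\nabla u_0)(bx)$.

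\textbf{Choice of $c$.} The viscous and Hamiltonian terms scale alike exactly when $cb^2=(cb)^{\gamma}$, that is, $c=b^{\frac{2-\gamma}{\gamma-1}}$. With this choice $cb^2=b^{\gamma'}$ and $(cb)^{\gamma-1}=b$.

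\textbf{The Fokker--Planck equation.} Its two terms then both carry the factor $ab^2$, so the $m$-equation is preserved.

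\textbf{The Hamilton--Jacobi equation.} Multiplying the $u_0$-equation by $b^{\gamma'}$ gives
\begin{align*}
-\Delta\bar u_0+H(\nabla\bar u_0)-b^{\gamma'}=-\,b^{\gamma'}\mu_0a^{-\alpha}\,\bar m_0^{\alpha}.
\end{align*}
So I impose $a^{\alpha}=\mu_0 b^{\gamma'}$ and set $\bar\lambda_0:=-b^{\gamma'}$.

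\textbf{The mass constraint.} It reads $ab^{-n}\int m_0=M$, i.e.
\begin{align*}
\mu_0^{1/\alpha}\,b^{\frac{\gamma'}{\alpha}-n}\int_{\mathbb R^n}m_0\,dx=M.
\end{align*}
Since $\alpha\neq\alpha_*$, the exponent $\frac{\gamma'}{\alpha}-n$ is nonzero. Since $\mu_0>0$ and $m_0>0$, this equation has a unique solution $b>0$, which fixes $a$, $c$ and $\bar\lambda_0$. I expect this to be the main (though mild) obstacle: one has to track the homogeneity exponents carefully and confirm that the mass equation is solvable precisely because $\alpha\neq\alpha_*$ and $\mu_0>0$.

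\textbf{Regularity and admissibility.} Regularity is preserved under dilation, so $\bar u_0\in C^2(\mathbb R^n)$ and $\bar m_0\in W^{1,p}(\mathbb R^n)$ for all $p>1$, by Lemma \ref{Existience_of_MFG_free_potential}. The exponential decay of $m_0$ (Lemma \ref{mdecaylemma}) passes to $\bar m_0$. Together with $V(x)\le C_2|x|^b$ from (V2), this gives $\int V\bar m_0<\infty$, and hence $(\bar m_0,\bar w_0)\in\mathcal A$ with $\bar w_0=-\bar m_0\nabla H(\nabla\bar u_0)=ab\,w_0(bx)$.

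\textbf{Minimality.} I verify by direct computation that the quotient in \eqref{Gn_Inequality} is invariant under $(m,w)\mapsto(a\,m(b\cdot),ab\,w(b\cdot))$. The three integrals scale as
\begin{align*}
\int mL\Big(-\frac{w}{m}\Big)\mapsto ab^{\gamma'-n}\int mL\Big(-\frac{w}{m}\Big),\qquad \int m\mapsto ab^{-n}\int m,\qquad \int m^{1+\alpha}\mapsto a^{1+\alpha}b^{-n}\int m^{1+\alpha}.
\end{align*}
The numerator therefore picks up the factor $a^{1+\alpha}b^{-n}$, because
\begin{align*}
\tfrac{n\alpha}{\gamma'}+\tfrac{(\alpha+1)\gamma'-n\alpha}{\gamma'}=1+\alpha,\qquad \tfrac{(\gamma'-n)n\alpha}{\gamma'}-\tfrac{n((\alpha+1)\gamma'-n\alpha)}{\gamma'}=-n.
\end{align*}
This is the same factor as the denominator, so the quotient is unchanged. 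Lemma \ref{sect3_lemm32} says $(m_0,w_0)$ attains $\Gamma_\alpha$, so $(\bar m_0,\bar w_0)$ attains it as well, which proves the lemma.
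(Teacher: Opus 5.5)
Your proposal is correct and is essentially the paper's proof. The paper also rescales $(u_0,m_0,\mu_0)$ as $\bar m_0=s_0m_0(r_0x)$, $\bar u_0=t_0u_0(r_0x)$, $\bar\lambda_0=-t_0r_0^2$, with $t_0=r_0^{\gamma'-2}$ and $s_0^{\alpha}=\mu_0r_0^{\gamma'}$, which are exactly your relations; it differs only in using the Pohozaev identity $\int m_0=\mu_0\bigl(1-\frac{n\alpha}{(1+\alpha)\gamma'}\bigr)$ to write $r_0$ explicitly, whereas you solve the mass equation for $b$ implicitly, and your explicit check that the quotient in \eqref{Gn_Inequality} is scaling invariant supplies the step the paper leaves as following ``easily.''
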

\begin{proof}

Let $\big(u_{0},m_{0},\mu_{0}\big)\in C^2(\mathbb R^n)\times W^{1,p}(\mathbb R^n)\times \mathbb R$, $\forall p>1$, be a solution of system \eqref{mass_supercritical_problem_free_potential} obtained in Lemma \ref{Existience_of_MFG_free_potential}. Correspondingly, $(m_{0},w_{0})\in \mathcal{A}\cap \left\{m: \Vert m\Vert_{L^{1+\alpha}(\mathbb R^n)}=1 \right\}$ is the minimizer of the Gagliardo-Nirenberg type inequality \eqref{Gn_Inequality} with $w_{0}=-m_{0}\nabla H(\nabla u_0)$.
By using the Pohozaev identities \eqref{eq2.49}, we obtain that $(u_{0},m_{0},\mu_{0})$ satisfies
	\begin{align*}
	\left\{\begin{array}{ll}
		-\Delta u_{0}+H(\nabla u_{0})+\mu_{0} m_{0}^{\alpha}=1,&x\in\mathbb R^n,\\
		\Delta m_{0}+\nabla\cdot (m_{0}\nabla H(\nabla u_0))=0,&x\in\mathbb R^n,\\
		 \frac{(\alpha+1)\gamma'}{\mu_{0}\left[(\alpha+1)\gamma'- n\alpha\right]}\int_{\mathbb R^n}m_{0}\,dx=1.
	\end{array}
	\right.
\end{align*}
Let
\begin{equation}\label{eqs1}
	\bar m_{0}(x):= s_{0}m_{0}(r_{0}x), \, \bar u_{0}(x):=t_{0}u_{0}(r_{0}x)\,\, \text{and}\,\, \bar\lambda_{0}:=-t_{0}r_{0}^{2},
\end{equation}
 where 
 \begin{equation}\label{eqs2}
 s_{0}=\mu_{0}^{\frac{1}{\alpha}}\mathcal{T}^\frac{\gamma'}{n\alpha-\gamma'} ,\,  t_{0}=\mathcal{T}^\frac{\alpha(\gamma'-2)}{n\alpha-\gamma'}, \,\,    r_{0}=\mathcal{T}^\frac{\alpha}{n\alpha-\gamma'}\,\,\text{with} \,\, \mathcal{T}:=\left(1-\frac{n\alpha}{(1+\alpha)\gamma'}\right)M^{-1}\mu_{0}^{1+\frac{1}{\alpha}}.
 \end{equation}
Then, the remaining proof of this lemma follows easily from the scaling \eqref{eqs1} and \eqref{eqs2}.
\end{proof}

We are now ready to show that the solution of the MFG system \eqref{L^1_constrain_equation} given by Lemma \ref{resolMFG} is a mountain-pass solution  when $\alpha\in(\alpha_{*},\alpha^{*})$.
To begin with, we note that the solution to equations \eqref{L^1_constrain_equation} formally corresponds to a critical point of the functional $\mathcal{J}_{0}$ given in \eqref{L^1_constrain_functional}.  {Therefore, we now turn to the study of the connection between the solutions of \eqref{L^1_constrain_equation} obtained in Lemma \ref{resolMFG} and the minimax values of the functional $\mathcal{J}_{0}$ subject to the constraint set $\mathcal{A}\cap \left\{m: \Vert m\Vert_{L^{1}(\mathbb R^n)}=M \right\}$.}  We first show that the functional  $\mathcal{J}_{0}$ possesses a mountain pass geometry on $\mathcal{A}\cap \left\{m: \Vert m\Vert_{L^{1}(\mathbb R^n)}=M \right\}$.  To see this, we establish a preliminary lemma as follows:
\begin{lemma}\label{MP_1}
{Assume that $\alpha\in(\alpha_{*},\alpha^{*})$}. Then there exists $R_{0}>0$ such that  
   \begin{equation*}
       0<\sup_{(m,w)\in B_{R_{0}}}\mathcal{J}_{0}(m,w)<\inf_{(m,w)\in \partial B_{2R_{0}}}\mathcal{J}_{0}(m,w),
   \end{equation*}
   where 
   \begin{equation}\label{set_Mp}
       B_{R_{0}}:= \left\{(m,w)\in \mathcal{A}\cap \left\{m: \Vert m\Vert_{L^{1}(\mathbb R^n)}=M \right\}: \int_{\mathbb{R}^{n}}L\bigg(-\frac{w}{m}\bigg)m\, dx \leq R_{0} \right\}.
\end{equation}
\end{lemma}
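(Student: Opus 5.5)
The plan is to use the Gagliardo--Nirenberg type inequality of Theorem~\ref{GNinequalitythm}, i.e.\ Lemma~\ref{sect3_lemm32}, which gives $\Gamma_\alpha>0$ (since $e_0>0$ by Lemma~\ref{Existience_of_MFG_free_potential}) and is attained. For every $(m,w)\in\mathcal A\cap\{\|m\|_{L^1}=M\}$, write $T(m,w):=\int_{\mathbb R^n} mL(-\frac{w}{m})\,dx$. The definition \eqref{Gn_Inequality} then yields
\begin{equation*}
\int_{\mathbb R^n} m^{1+\alpha}\,dx\le \Gamma_\alpha^{-1}\,T^{\frac{n\alpha}{\gamma'}}\,M^{\frac{(1+\alpha)\gamma'-n\alpha}{\gamma'}},
\end{equation*}
and hence the two-sided estimate
\begin{equation*}
T-\frac{C_M}{1+\alpha}T^{\frac{n\alpha}{\gamma'}}\le \mathcal J_0(m,w)\le T, \qquad C_M:=\Gamma_\alpha^{-1}M^{\frac{(1+\alpha)\gamma'-n\alpha}{\gamma'}}.
\end{equation*}
The upper bound uses only $m\ge 0$. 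The key point is that $\alpha>\alpha_*$ means exactly $\frac{n\alpha}{\gamma'}>1$, so the function $g(T):=T-\frac{C_M}{1+\alpha}T^{n\alpha/\gamma'}$ behaves like $T$ for small $T$.

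Next I choose $R_0$ small. On $B_{R_0}$ we have $\sup\mathcal J_0\le R_0$. For $(m,w)\in\partial B_{2R_0}$, which I interpret as $T=2R_0$, we get $\mathcal J_0\ge g(2R_0)=2R_0-\frac{C_M}{1+\alpha}(2R_0)^{n\alpha/\gamma'}$. This exceeds $\frac{3}{2}R_0>R_0$ as soon as $\frac{C_M}{1+\alpha}(2R_0)^{n\alpha/\gamma'-1}<\frac14$, which holds for $R_0$ small because the exponent is positive. This gives $\sup_{B_{R_0}}\mathcal J_0\le R_0<\frac32 R_0\le\inf_{\partial B_{2R_0}}\mathcal J_0$.

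For the strict positivity $\sup_{B_{R_0}}\mathcal J_0>0$, I exhibit a single element of $B_{R_0}$ with positive energy. Take any admissible pair, for instance the scaled $(\bar m_0,\bar w_0)$ from Lemma~\ref{resolMFG}, and apply the mass-preserving dilation $(m^t,w^t)=(t^n m(tx),t^{n+1}w(tx))$. Membership in $\mathcal A$ (the weak equation and $\int Vm<\infty$) is preserved under dilation, using the exponential decay of $\bar m_0$. The scaling gives $T(m^t,w^t)=t^{\gamma'}T(m,w)$ and $\int (m^t)^{1+\alpha}=t^{n\alpha}\int m^{1+\alpha}$. As $t\to0$ we get $T\le R_0$ eventually, and $\mathcal J_0(m^t,w^t)\ge g(T(m^t,w^t))>0$ because $g>0$ on $(0,R_0]$ for small $R_0$. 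Alternatively, any element with $0<T\le R_0$ already satisfies $\mathcal J_0\ge g(T)>0$. Since $m\not\equiv0$ and $\|m\|_{L^1}=M$ force $T>0$ (otherwise $\nabla m=w=0$ would give $m\equiv 0$), positivity holds on all of $B_{R_0}$.

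The main obstacle is only bookkeeping: checking that the Gagliardo--Nirenberg inequality of Theorem~\ref{GNinequalitythm} applies on the whole constraint set (finiteness of $\Gamma_\alpha>0$, which comes from Lemma~\ref{sect3_lemm32} and $e_0>0$), and that the dilated pairs stay in $\mathcal A$. Everything else reduces to the elementary one-variable analysis of $g$ near $0$, which relies on $n\alpha/\gamma'>1$.
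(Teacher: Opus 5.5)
Your proposal is correct and follows the paper's own argument: you bound $\int m^{1+\alpha}$ by $C_M T^{n\alpha/\gamma'}$ via \eqref{Gn_Inequality} with $\Gamma_\alpha>0$, then use $n\alpha/\gamma'>1$ and a small $R_0$ to get $\sup_{B_{R_0}}\mathcal J_0\le R_0<\inf_{\partial B_{2R_0}}\mathcal J_0$. Your positivity step, where $\mathcal J_0\ge g(T)>0$ for $0<T\le R_0$, $T>0$ is forced by $m\not\equiv0$, and $B_{R_0}\neq\emptyset$ follows by dilating the exponentially decaying $\bar m_0$, is more careful than the paper's claim that $\mathcal J_0\ge R_0/2$ on all of $B_{R_0}$, which cannot hold as $T\to0$.
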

\begin{proof}
 Suppose that $(m,w), (\tilde{m},\tilde{w}) \in \mathcal{A}\cap \left\{m: \Vert m\Vert_{L^{1}(\mathbb R^n)}=M \right\}$ satisfying
\begin{equation*}
\int_{\mathbb{R}^{n}}mL\left(-\frac{w}{m}\right)\,dx\leq R_{0}~\text{and}~ \int_{\mathbb{R}^{n}}\tilde{m}L\left(-\frac{\tilde{w}}{\tilde{m}}\right)\,dx =2 R_{0}, 
\end{equation*}
where $R_{0}>0$ is arbitrary but fixed. Then, by (\ref{Gn_Inequality}) and $\alpha>\alpha_{*}=\frac{\gamma'}{n}$, we obtain that for some $C=C(M,R_0)>0,$
\begin{align*}
   \mathcal{J}_{0}(\tilde{m},\tilde{w})-\mathcal{J}_{0}(m,w)&\geq \int_{\mathbb{R}^{n}}L\bigg(-\frac{\tilde{w}}{\tilde{m}}\bigg)\tilde{m}\,dx-\int_{\mathbb{R}^{n}}L\bigg(-\frac{w}{m}\bigg)m\,dx-\frac{1}{1+\alpha}\int_{\mathbb{R}^{n}} {\tilde{m}}^{1+\alpha}\,dx \\
   &\geq R_{0}-\frac{1}{(1+\alpha){ \Gamma_{\alpha}}}\left(\int_{\mathbb R^n}\tilde mL\left(-\frac{\tilde w}{\tilde m}\right)\,dx\right)^{\frac{n\alpha}{\gamma'}}    \\
   &\geq R_{0}-CR_{0}^{\frac{n\alpha}{\gamma'}}\geq \frac{R_{0}}{2}>0,
\end{align*}
provided that $R_{0}>0$ is chosen sufficiently small. In addition, for $(m,w)$ and $R_{0}$ given above, we further have
\begin{align*}
   \mathcal{J}_{0}(m,w)\geq C_L\int_{\mathbb{R}^{n}}L\bigg(-\frac{w}{m}\bigg)m\,dx-C{R_{0}}^{\frac{n\alpha}{\gamma'}}\geq \frac{R_{0}}{2}>0.
\end{align*}
Therefore, the desired conclusion holds.
\end{proof}
Now,  let $(m_{1},w_{1})\in B_{R_{0}}$ and  $(m_{2},w_{2})\in B_{2R_{0}}^{\, c}$, $B_{R_{0}}$ with $R_{0}>0$ is given by lemma \ref{MP_1}, be such that 
  \begin{equation}\label{mountain_pass_level_00}
\mathcal{J}_{0}(m_{1},w_{1})>0> \mathcal{J}_{0}(m_{2},w_{2}).
\end{equation}
Following the ideas outlined in \cite{Jeanjean1997}, we can define the mountain pass level
\begin{equation}\label{mountain_pass_level_0}
    e_{MP}:=\inf_{h\in \Pi}\max_{t\in[0,1]}\mathcal{J}_{0}(h(t)),
\end{equation}
where 
\begin{equation}\label{mounpass_path}
    \Pi:=\bigg\{  h\in C\left([0,1],\mathcal{A}\cap \left\{m: \Vert m\Vert_{L^{1}(\mathbb R^n)}=M \right\}\right): h(0)=(m_{1},w_{1})\,\,\text{and}\,\, h(1)=(m_{2},w_{2})\bigg\}.
\end{equation}
\begin{remark}\label{mountain_pass_geometry_1}
The existence of $(m_{i},w_{i})$, $i=1,2,$ mentioned above should be addressed. Indeed, let $(m,w)\in\mathcal{A}\cap \left\{m: \Vert m\Vert_{L^{1}(\mathbb R^n)}=M \right\}$ be arbitrary but fixed, one can easily check that
\begin{equation}\label{scaling_mw}
   (m_{t}(x),w_{t}(x)):=\big(t^{n}m(tx),t^{n+1}m(tx)\big)\in \mathcal{A}\cap \left\{m: \Vert m\Vert_{L^{1}(\mathbb R^n)}=M \right\} \text{ for}\,\,t\in\mathbb{R}^{+}.
\end{equation}
Moreover, there exist $t_{1}>0$ small enough and $t_{2}>0$ large enough such that $(m_{i},w_{i}):=(  m_{t_{i}}, w_{t_{i}})\in \mathcal{A}\cap \left\{m: \Vert m\Vert_{L^{1}(\mathbb R^n)}=M \right\}$, $i=1,2$, satisfying
\begin{equation*}
     \int_{\mathbb{R}^{n}}m_{1}L\left(-\frac{w_{1}}{m_{1}}\right)\,dx\leq R_{0}\,\, \text{and} \,\,\int_{\mathbb{R}^{n}}m_{2}L\left(-\frac{w_{2}}{m_{2}}\right)\,dx> 2R_{0}
     \end{equation*}
     for $R_{0}>0$ given by Lemma \ref{MP_1}.
\end{remark}

We emphasize that, in the definition of the mountain-pass level given in \eqref{mountain_pass_level_0}, the two endpoints $(m_{1},w_{1})$ and $(m_{2}, w_{2})$ are  fixed. In fact, this requirement may be relaxed, and an equivalent definition will be established below.  We remark that the argument is inspired by the result in \cite{Jeanjean1997}.
\begin{lemma}\label{mp_equi}
   Let $B_{R_{0}}$ be given in \eqref{set_Mp}, then $B_{R_{0}}$ and  $\overline{B_{2R_{0}}^{\,c}}\, \cap \left\{(m,w):\mathcal{J}_{0}(m,w)\leq 0\right \}$ are arc-connected. Moreover, it holds 
    \begin{equation}\label{mountain_pass_level_001}
        e_{MP}= \inf_{g\in\widetilde \Pi}\max_{s\in[0,1]} \mathcal{J}_{0}(g(s)),   
\end{equation}
    where $e_{MP}$ is given in \eqref{mountain_pass_level_0} and
\begin{equation}\label{mounpass_path_tilde}
        \widetilde \Pi:=\bigg\{  g\in C\left([0,1],\mathcal{A}\cap \left\{m: \Vert m\Vert_{L^{1}(\mathbb R^n)}=M \right\}\right): g(0)\in B_{R_{0}}\,\,\text{and}\,\, g(1)\in \overline{B_{2R_{0}}^{\,c}}\, \cap \left\{(m,w):\mathcal{J}_{0}(m,w)\leq 0\right \}\bigg\}. 
\end{equation} 
\end{lemma}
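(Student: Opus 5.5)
The plan is to work with the $L^1$-preserving dilation $(m,w)\mapsto (m_t,w_t):=(t^n m(tx),t^{n+1}w(tx))$ from Remark \ref{mountain_pass_geometry_1}. It preserves $\mathcal{A}\cap\{\Vert m\Vert_{L^1}=M\}$, since the weak Fokker--Planck relation $\int\nabla m\cdot\nabla\varphi=\int w\cdot\nabla\varphi$ is dilation invariant. Under it,
\[
\int m_tL\Big(-\tfrac{w_t}{m_t}\Big)dx=t^{\gamma'}\int mL\Big(-\tfrac{w}{m}\Big)dx,\qquad \int m_t^{1+\alpha}dx=t^{n\alpha}\int m^{1+\alpha}dx .
\]
Both quantities are continuous and strictly increasing in $t$. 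They tend to $0$ as $t\to0^+$ and to $+\infty$ as $t\to+\infty$.

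\emph{Arc-connectedness of $B_{R_0}$.} Take $(m,w)\in B_{R_0}$. The path $t\mapsto (m_t,w_t)$ for $t\in[\epsilon,1]$ stays in $B_{R_0}$, since the kinetic term only decreases as $t$ decreases. It joins $(m,w)$ to an element with arbitrarily small kinetic energy. Given two such elements of tiny kinetic energy, join them by the convex combination $(1-\sigma)(m,w)+\sigma(\tilde m,\tilde w)$. This stays in $\mathcal{A}$ with mass $M$, because the constraint is linear and $m\ge0$. By joint convexity of $(m,w)\mapsto mL(-w/m)$, its kinetic energy is at most the maximum of the endpoint values, hence at most $R_0$. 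Concatenating the two dilation paths with this convex path gives the required arc in $B_{R_0}$.

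\emph{Arc-connectedness of $\overline{B_{2R_0}^{\,c}}\cap\{\mathcal{J}_0\le0\}$.} For $(m,w)$ in this set, write
\[
\mathcal{J}_0(m_t,w_t)=t^{\gamma'}a-\tfrac{t^{n\alpha}}{1+\alpha}b .
\]
Since $n\alpha>\gamma'$, the function $t^{-\gamma'}\mathcal{J}_0(m_t,w_t)$ is decreasing in $t$. So $\mathcal{J}_0(m_t,w_t)\le0$ for all $t\ge1$, and for these $t$ the kinetic energy stays above $2R_0$. Thus dilation paths with $t\ge1$ stay in the set and push any element to a point where $\mathcal{J}_0\le -K$ for arbitrarily large $K$.

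To connect two elements pushed far out, I would connect them by a path along which $\mathcal{J}_0$ stays negative and the kinetic term stays above $2R_0$. My first attempt is the convex combination, after a common large dilation. The kinetic part is bounded above by convexity, at most $t^{\gamma'}\max(a,\tilde a)$. The $L^{1+\alpha}$ part is bounded below by concavity of the path: by Minkowski, $\Vert(1-\sigma)m+\sigma\tilde m\Vert_{1+\alpha}\ge \min$ of the endpoint norms, because $m,\tilde m\ge0$. Hence along the combined path
\[
\mathcal{J}_0\le t^{\gamma'}\max(a,\tilde a)-\tfrac{t^{n\alpha}}{1+\alpha}\min(b,\tilde b),
\]
which is negative for $t$ large because $n\alpha>\gamma'$.

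The delicate point is keeping the kinetic energy above $2R_0$ along the convex path, where convexity only gives an upper bound. I would resolve this with the Gagliardo--Nirenberg inequality \eqref{Gn_Inequality}. On the path, $\mathcal{J}_0\le0$ forces
\[
\int mL\Big(-\tfrac{w}{m}\Big)dx\le \tfrac{1}{1+\alpha}\int m^{1+\alpha}dx\le C\Big(\int mL\Big(-\tfrac{w}{m}\Big)dx\Big)^{n\alpha/\gamma'},
\]
with $n\alpha/\gamma'>1$. Hence the kinetic energy is either $0$ or at least a fixed constant $c_*$, and with $R_0$ small (as in Lemma \ref{MP_1}) we have $c_*>2R_0$. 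The value $0$ is excluded because $m\not\equiv0$ has positive mass, so $\int m^{1+\alpha}>0$. This step, verifying that negativity of $\mathcal{J}_0$ automatically keeps paths outside $B_{2R_0}$, is where I expect the main care to be needed.

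\emph{Equality \eqref{mountain_pass_level_001}.} Since $\Pi\subset\widetilde\Pi$ (the endpoints $(m_i,w_i)$ belong to the respective sets), we get $\inf_{\widetilde\Pi}\le e_{MP}$. Conversely, take $g\in\widetilde\Pi$. Use arc-connectedness to prepend an arc in $B_{R_0}$ from $(m_1,w_1)$ to $g(0)$, and to append an arc in the second set from $g(1)$ to $(m_2,w_2)$. Along the first arc,
\[
\mathcal{J}_0\le\sup_{B_{R_0}}\mathcal{J}_0<\inf_{\partial B_{2R_0}}\mathcal{J}_0\le\max_s\mathcal{J}_0(g(s)),
\]
where the last inequality holds because $g$ must cross $\partial B_{2R_0}$ by continuity of the kinetic term along the path. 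Along the second arc, $\mathcal{J}_0\le0$. So the concatenated path lies in $\Pi$ and has the same maximum as $g$, which gives $e_{MP}\le\inf_{\widetilde\Pi}$.
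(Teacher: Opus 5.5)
Your construction has the same architecture as the paper's proof: a convex segment inside $B_{R_0}$, and for the second set, dilate up, join by a convex segment at a common large scale, then dilate back. However, one inequality you invoke is false. For $p=1+\alpha>1$ and nonnegative $m,\tilde m$ one does \emph{not} have $\|(1-\sigma)m+\sigma\tilde m\|_{L^{p}}\ge\min\{\|m\|_{L^{p}},\|\tilde m\|_{L^{p}}\}$. If $m,\tilde m$ have disjoint supports and unit $L^{p}$ norm, the left-hand side at $\sigma=\frac12$ equals $2^{-\alpha/(1+\alpha)}<1$; Minkowski only gives the opposite, convexity-type bound.

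The correct substitute is pointwise superadditivity, $(x+y)^{1+\alpha}\ge x^{1+\alpha}+y^{1+\alpha}$ for $x,y\ge0$, which is what the paper uses in \eqref{norm_mass_path_8}. It gives
\[
\int_{\mathbb R^n}\big((1-\sigma)m+\sigma\tilde m\big)^{1+\alpha}\,dx\ \ge\ \big((1-\sigma)^{1+\alpha}+\sigma^{1+\alpha}\big)\min\{b,\tilde b\}\ \ge\ 2^{-\alpha}\min\{b,\tilde b\}.
\]
So your estimate for $\mathcal J_0$ along the segment holds with $\min\{b,\tilde b\}$ replaced by $2^{-\alpha}\min\{b,\tilde b\}$. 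Since $n\alpha>\gamma'$, the segment still lies in $\{\mathcal J_0<0\}$ once $T$ is large.

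With this repair your proof is correct. It differs from the paper's in how the middle segment is kept outside $B_{2R_0}$:
\begin{itemize}
\item The paper uses the two-sided bound \eqref{norm_mass_path_7}. Its lower constant, the minimal kinetic energy along the unscaled segment, is justified through linear independence of $w_1,w_2$. This costs a preliminary equalization of kinetic energies and a detour through an auxiliary point in the dependent case.
\item Your dichotomy on $\{\mathcal J_0\le0\}$ makes the lower bound automatic and removes both case distinctions.
\end{itemize}

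The step you flag as delicate needs no extra smallness of $R_0$. Suppose $\mathcal J_0(m,w)\le0$ with kinetic energy in $(0,2R_0)$. Dilating with $t\ge1$ until the kinetic energy equals $2R_0$ keeps $\mathcal J_0\le0$, because $\mathcal J_0(m_t,w_t)\le t^{\gamma'}\mathcal J_0(m,w)$. This contradicts $\inf_{\partial B_{2R_0}}\mathcal J_0>0$ from Lemma \ref{MP_1}.

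You also give an actual argument for \eqref{mountain_pass_level_001}, which the paper only asserts.

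Finally, the initial dilation in your $B_{R_0}$ step is unnecessary. Joint convexity already shows that $B_{R_0}$ is convex. Dropping it leaves only dilations with $t\ge1$, which is the direction in which \eqref{V2mainassumption_3} directly guarantees $\int V m_t\,dx<\infty$.
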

\begin{proof}
To begin with, 
let $(m_i,w_i)\in \mathcal{A}\cap \left\{m:\int_{\mathbb{R}^n}m\,dx=M\right \}$, $i=1,2$, be two distinct pairs. We define $ h\Big((m_1, w_1), (m_2, w_2), s, t \Big):\bigg \{\mathcal{A}\cap \left\{m:\int_{\mathbb{R}^n}m\,dx=M\right \} \bigg\}^{2}\times[0,1] \times \mathbb{R}^{+}\to \mathcal{A} \cap \left\{m:\int_{\mathbb{R}^n}m\,dx=M\right\}$ as
     \begin{equation*}
       h\Big((m_1, w_1), (m_2, w_2), s, t \Big):=\Big((1-s) t^{n}m_{1}(tx)+st^{n} m_{2}(tx),~ (1-s)t^{n+1}w_{1}(tx)+s t^{n+1}w_{2}(tx) \Big).
    \end{equation*}
Then, it can also be verified that, for all  $t\in \mathbb{R}^+$,
$$h\Big((m_{1}, w_{1}), (0, 0), 0, t \Big), h\Big((0, 0), (m_{2}, w_{2}), 1, t \Big)  \in \mathcal{A}\cap \left\{m:\int_{\mathbb{R}^n}m\,dx=M\right\}.$$
We further assume that 
    \begin{equation}\label{norm_mass_path}
     a:=\int_{\mathbb{R}^{n}}m_{1}L\left(-\frac{w_{1}}{m_{1}}\right)\,dx=\int_{\mathbb{R}^{n}}m_{2}L\left(-\frac{w_{2}}{m_{2}}\right)\,dx>0,
     \end{equation}
    and that $w_1$ and $w_2$ are linearly independent.  Let  
\begin{equation*}
\begin{aligned}
 I\Big((m_{1},w_{1}), (m_{2},w_{2}),s,t\Big):&= \int_{\mathbb{R}^{n}}\left((1-s)t^{n}m_{1}(tx) +st^{n}m_{2}(tx)\right)L\left(-\frac{(1-s)t^{n+1}w_{1}(tx) +st^{n+1}w_{2}(tx)}{(1-s)t^{n}m_{1}(tx)+st^{n}m_{2}(tx)}\right)\,dx\\
 &=t^{\gamma'}\int_{\mathbb{R}^{n}}\left((1-s)m_{1} +s m_{2}\right)L\left(-\frac{(1-s)w_{1} +sw_{2}}{(1-s)m_{1}+sm_{2}}\right)\,dx,
 \end{aligned}
\end{equation*}
where $s\in[0,1]$ and $t\in \mathbb{R}^+$. On the one hand, using the convexity of $\int_{\mathbb{R}^n}mL\left(-\frac{w}{m}\right)\,dx$ and \eqref{norm_mass_path}, we obtain
\begin{equation*}
\begin{aligned}
 I\Big((m_{1},w_{1}), (m_{2},w_{2}),s,t\Big)&\leq t^{\gamma' }(1-s)\int_{\mathbb{R}^n}m_{1}  L\left(- \frac{w_{1}}  {m_{1}} \right)\,dx+t^{\gamma' }s\int_{\mathbb{R}^n} m_{2}L \left(-\frac{w_{2}}  {m_{2}} \right)\,dx=at^{\gamma'},
 \end{aligned}
\end{equation*}
for any $t\in \mathbb{R}^+$ and all $s\in[0,1]$. On the other hand, due to the linear independence of $w_{1}$ and $w_{2}$, we deduce that for all $t\in \mathbb{R}^{+}$,
\begin{equation*}
    I\Big((m_{1},w_{1}), (m_{2},w_{2}),s,t\Big) \geq 
     b t^{\gamma'},
\end{equation*}
where $$b:=b\Big((m_{1},w_{1}),(m_{2},w_{2})\Big):=\min_{s\in[0,1]}\int_{\mathbb{R}^{n}}\left((1-s)m_{1} +s m_{2}\right)L\left(-\frac{(1-s)w_{1} +sw_{2}}{(1-s)m_{1}+sm_{2}}\right)\,dx>0.$$
Therefore, we conclude that 
\begin{equation}\label{norm_mass_path_7}
    bt^{\gamma'}\leq I\Big((m_{1},w_{1}), (m_{2},w_{2}),s,t\Big) \leq at^{\gamma'}, \,\,\text{for any}\,\, s\in[0,1]\,\,\text{and}\,\, t\in \mathbb{R}^{+}. 
\end{equation}

In addition, noting that $(m_{i},w_{i})\in \mathcal{A}\cap \left\{m:\int_{\mathbb{R}^n}m\,dx=M\right \}$, $i=1,2$, we have for any $t\in\mathbb{R}^{+}$ and any $s\in[0,1]$, 
\begin{equation}\label{norm_mass_path_8}
\begin{aligned}
   & \int_{\mathbb{R}^n} \Big((1-s) t^{n}m_{1}(tx) +st^{n}m_{2}(tx) \Big)^{1+\alpha}\,dx\\
   = & t^{n\alpha} \int_{\mathbb{R}^n} \Big((1-s) m_{1}(x) +sm_{2}(x) \Big)^{1+\alpha}\,dx\\
   \geq & t^{n\alpha} \left\{ (1-s)^{1+\alpha}\int_{\mathbb{R}^n} m_{1}^{1+\alpha}\, dx +s^{1+\alpha}\int_{\mathbb{R}^n} m_{2}^{1+\alpha}\, dx  \right\}
   \geq  c t^{n\alpha}, 
\end{aligned}
\end{equation}
where $$c:=c(m_{1},m_{2})=\min_{s\in [0,1]}\left\{ (1-s)^{1+\alpha}\int_{\mathbb{R}^n} m_{1}^{1+\alpha}\, dx +s^{1+\alpha}\int_{\mathbb{R}^n} m_{2}^{1+\alpha}\, dx  \right\}>0. $$

Now, we prove that $ B_{R_{0}}$ is arc-connected. Let $(\bar m_{i},\bar w_{i})\in B_{R_{0}}$, $i=1,2$, be two distinct arbitrary points. We finish the proof under the assumption that
\begin{equation}\label{const_topo_009}
     \int_{\mathbb{R}^{n}}\bar m_{1}L\left(-\frac{\bar w_{1}}{\bar m_{1}}\right)\,dx=\int_{\mathbb{R}^{n}}\bar m_{2}L\left(-\frac{\bar w_{2}}{\bar m_{2}}\right)\,dx=a,
 \end{equation}
and that $\bar w_1$  and $\bar w_2 $  are linearly independent, clearly $a\leq R_{0}$ here.  We note that if $\int_{\mathbb{R}^{n}}\bar m_{1}L\left(-\frac{\bar w_{1}}{\bar m_{1}}\right)\,dx\neq\int_{\mathbb{R}^{n}}\bar m_{2}L\left(-\frac{\bar w_{2}}{\bar m_{2}}\right)\,dx$, then we can apply the continuous transformation \eqref{scaling_mw} 
to obtain that, for some $\hat t>0$, $(\hat m_{1},\hat w_{1})=(\bar m_{1,t}, \bar w_{1,t})$ and $\int_{\mathbb{R}^{n}}\hat m_{1}L\left(-\frac{\hat w_{1}}{\hat m_{1}}\right)\,dx=\int_{\mathbb{R}^{n}}\bar m_{2}L\left(-\frac{\bar w_{2}}{\bar m_{2}}\right)\,dx$ .  In addition, if  $\bar w_1$  and $\bar w_2 $  are linearly dependent, we first introduce an intermediate pair $(\bar m_{3}, \bar w_{3})$ such that $\bar w_{3}$ is linearly independent of both $\bar w_{1}$ and $\bar w_{2}$, and then construct a path connecting $(\bar m_{1},\bar w_{1})$ to $(\bar m_{2},\bar w_{2})$. This path is obtained by concatenating the path from $(\bar m_{1},\bar w_{2})$ to $(\bar m_{3},\bar w_{3})$ with the path from $(\bar m_{3},\bar w_{3})$ to $(\bar m_{2},\bar w_{2})$. Under these assumptions, we now proceed with the proof. In view of \eqref{norm_mass_path_7}, we can choose $$\bar g(s)=h\Big( (\bar m_{1}, \bar w_{1}),(\bar m_{2}, \bar w_{2}), s, 1 \Big),\, \forall s\in[0,1]$$ as a path connecting $(\bar m_{1},\bar w_{1})$ and $(\bar m_{2},\bar w_{2})$, which satisfies $$\bar g(0)=(\bar m_{1},\bar w_{1}),\bar g(1)=(\bar m_{2},\bar w_{2}) \text{ and } \bar g(r)\in B_{R_{0}}.$$


Next, we show that $\overline{B_{2R_{0}}^{\,c}}\, \cap \left\{(m,w):\mathcal{J}_{0}(m,w)\leq 0\right \}$ is arc-connected. Indeed, suppose that $(\tilde m_{i},\tilde w_{i})\in\overline{B_{2R_{0}}^{\,c}}\, \cap \left\{(m,w):\mathcal{J}_{0}(m,w)\leq 0\right \}$, $i=1,2$, are arbitrary pairs, which satisfy the same condition as \eqref{const_topo_009},
    and that $\tilde w_1$  and $\tilde w_2 $  are linearly independent. Then, \eqref{norm_mass_path_7}, \eqref{norm_mass_path_8} and the fact $n\alpha>\gamma'$ yield that there exists $\tilde t_{0}=\tilde t_{0}\Big((\tilde m_1,\tilde w_1), (\tilde m_2,\tilde w_2)\Big)>0$ sufficiently large such that
$$I\Big((\tilde m_{1},\tilde w_{1}), (\tilde m_{2},\tilde w_{2}),s,\tilde t_{0}\Big)\geq 2 R_{0}\,\,\text{and}\,\, \mathcal{J}_{0}\left(h\Big((\tilde m_{1},\tilde w_{1}),(\tilde m_{2},\tilde w_{2}), s , \tilde t_{0}\Big)\right)\leq 0,\,\forall s\in [0,1].$$
Then, we construct a continuous path $\tilde g:[0, 3]\to \mathcal{A}\cap \left\{m:\int_{\mathbb{R}^n}m\,dx=M\right \}$ to connect $(\tilde m_{1},\tilde w_{1})$ and $(\tilde m_{2},\tilde w_{2})$ by 
\begin{equation*}
\tilde g(r):=\begin{cases} h\Big( (\tilde m_{1}, \tilde w_{1}),(0, 0),0 , 1+r(\tilde t_{0}-1)\Big) , \ &\text{ if } 0\leq r\leq 1,\\
h\Big( (\tilde m_{1}, \tilde w_{1}),(\tilde m_{2}, \tilde w_{2}), r-1, \tilde t_{0} \Big),\ &\text{ if }1\leq r\leq 2,\\
h\Big( (0,0),(\tilde m_{2}, \tilde w_{2}), 1, \tilde t_{0}+(r-2)(1-\tilde t_{0}) \Big),&\text{ if }2\leq r\leq 3.
\end{cases}
\end{equation*}
One can verify that $\tilde g(0)=(\tilde m_{1},\tilde w_{1})$, $\bar g(3)=(\tilde m_{2},\tilde w_{2})$ and $\tilde g\in\overline{B_{2R_{0}}^{\,c}}\, \cap \left\{(m,w):\mathcal{J}_{0}(m,w)\leq 0\right \}$.
 
Finally, from Lemma \eqref{MP_1} and \eqref{mountain_pass_level_0}, we see that \eqref{mountain_pass_level_001} holds. 
\end{proof}

{Invoking Lemma~\ref{mp_equi}, we have the mountain pass level $e_{MP}$
  defined in \eqref{mountain_pass_level_0} admits the equivalent characterization, which is shown as follows}
\begin{lemma}\label{Equivalent_mountain_level}
    Let $e_{MP}$ be the minimax value defined in \eqref{mountain_pass_level_0}. Then 
\begin{equation*}
    e_{MP}=e_{IM}:=\inf_{(m,w)\in \mathcal{A}\cap \left\{m: \Vert m\Vert_{L^{1}(\mathbb R^n)}=M \right\}}\max_{t\in \mathbb{R}^{+}}\mathcal{J}_{0}(m_{t},w_{t}),
\end{equation*}
where $(m_{t},w_{t})$ is given by \eqref{scaling_mw}.
\end{lemma}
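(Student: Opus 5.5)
We prove the two inequalities $e_{MP}\le e_{IM}$ and $e_{IM}\le e_{MP}$ separately. Throughout we use the fiber map. For fixed $(m,w)\in\mathcal{A}\cap\{\|m\|_{L^1}=M\}$ set $A:=\int_{\mathbb R^n} mL(-w/m)\,dx>0$ and $B:=\int_{\mathbb R^n} m^{1+\alpha}\,dx>0$. Then
\[
\varphi(t):=\mathcal{J}_0(m_t,w_t)=t^{\gamma'}A-\frac{t^{n\alpha}}{1+\alpha}B .
\]
Since $n\alpha>\gamma'$, $\varphi$ is positive for small $t$, tends to $-\infty$ as $t\to\infty$, and has a unique critical point $t^*$, which is a strict global maximum. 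Moreover, $\int m_tL(-w_t/m_t)=t^{\gamma'}A$ is continuous and increasing from $0$ to $\infty$.

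\emph{Step 1: $e_{MP}\le e_{IM}$.} Fix $(m,w)$. Choose $t_1$ small so that $t_1^{\gamma'}A\le R_0$, which places $(m_{t_1},w_{t_1})$ in $B_{R_0}$. Choose $t_2$ large so that $t_2^{\gamma'}A\ge 2R_0$ and $\varphi(t_2)\le0$, which places the point in $\overline{B_{2R_0}^{\,c}}\cap\{\mathcal J_0\le0\}$. The path $s\mapsto (m_{t_1+s(t_2-t_1)},w_{t_1+s(t_2-t_1)})$ lies in $\widetilde\Pi$. We check continuity in the topology of $\mathcal{A}$: dilations are continuous in $L^1\cap W^{1,\hat q}\times L^1$. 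Hence Lemma \ref{mp_equi} gives
\[
e_{MP}\le\max_s\mathcal J_0\le \max_{t>0}\varphi(t).
\]
Taking the infimum over $(m,w)$ yields the claim.

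\emph{Step 2: $e_{IM}\le e_{MP}$.} Fix $g\in\widetilde\Pi$. Define $\tau(s)$ as the unique maximizer $t^*$ for $g(s)$, and consider the sign of $\varphi_{g(s)}'(1)$, equivalently the Pohozaev-type functional
\[
P(m,w):=\gamma' A-\frac{n\alpha}{1+\alpha}B .
\]
We have $P>0$ exactly when $t^*>1$.

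At $s=0$, $g(0)\in B_{R_0}$. The Gagliardo--Nirenberg inequality \eqref{Gn_Inequality} gives $B\le \Gamma_\alpha^{-1}A^{n\alpha/\gamma'}M^{(\cdots)}$, which is $o(A)$ for small $A$. After shrinking $R_0$ if necessary, as in Lemma \ref{MP_1}, this gives $P(g(0))>0$.

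At $s=1$, $\mathcal J_0(g(1))\le0$ means $A\le B/(1+\alpha)$. Hence
\[
P\le \frac{\gamma'-n\alpha}{1+\alpha}B<0 .
\]

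Continuity of $A$ and $B$ along $g$ is needed here. $B$ is continuous since $\mathcal{A}\subset W^{1,\hat q}$ embeds into $L^{1+\alpha}$ for $\alpha<\alpha^*$. $A$ is the delicate one, since it is only lower semicontinuous in general. To handle this, we would restrict to paths along which $A$ is finite and continuous, or note that $A(g(s))$ is finite along admissible paths and argue via upper/lower semicontinuity. With continuity available, the intermediate value theorem gives $s_0$ with $P(g(s_0))=0$, that is $t^*(g(s_0))=1$. Therefore
\[
\max_s\mathcal J_0(g(s))\ge\mathcal J_0(g(s_0))=\max_t\varphi_{g(s_0)}(t)\ge e_{IM}.
\]
Taking the infimum over $g$ and using Lemma \ref{mp_equi} gives $e_{MP}\ge e_{IM}$.

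The main obstacle is Step 2. It requires continuity of the kinetic term $\int mL(-w/m)$ along paths, so that the Pohozaev functional $P$ changes sign continuously. It also requires $P>0$ on all of $B_{R_0}$. We would first try to secure the latter by taking $R_0$ small, using the GN bound exactly as in Lemma \ref{MP_1}. If continuity of $A$ fails in the topology of $\mathcal A$, we would instead equip $\mathcal{A}$ with the topology making $(m,w)\mapsto \int mL(-w/m)$ continuous, which is implicit in the definition of $B_{R_0}$ and of the path class.
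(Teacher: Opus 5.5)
Your proposal is correct and is essentially the paper's argument. The paper inserts the intermediate level $e_P:=\inf\{\mathcal J_0(m,w): P(m,w)=0\}$ and proves three things: $e_{IM}=e_P$ via the fiber map; $e_{MP}\ge e_P$ via the sign change of $P$ along paths (Gagliardo--Nirenberg near $h(0)\in B_{R_0}$, and $\mathcal J_0<0$ at $h(1)$); and $e_{MP}\le e_P$ via the dilation path together with Lemma \ref{mp_equi}. These are exactly your two steps with $e_P$ eliminated.

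The continuity of $\int_{\mathbb R^n} mL(-w/m)\,dx$ along paths that you worry about is taken for granted in the paper, which simply invokes ``the continuity of $h(t)$''. Your caveat therefore points to an implicit assumption of the setup, not a gap specific to your proof.
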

\begin{proof}
    We first claim that
    \begin{equation}\label{eq-eP}
e_{IM}=e_{P}:=\inf_{(m,w)\in\mathcal{P}}\mathcal{J}_{0}(m,w),
  \end{equation}        
   where 
    \begin{equation*}
    \mathcal{P}:=\left\{(m,w)\in\mathcal{A}\cap \left\{m: \Vert m\Vert_{L^{1}(\mathbb R^n)}=M \right\}: P(m,w)=0\right\},
    \end{equation*}
with $ P(m,w)$ being defined as 
\begin{equation}\label{poho_equality}
   P(m,w):=\int_{\mathbb{R}^n}mL\left(-\frac{w}{m}\right)\,dx-\frac{n\alpha}{(1+\alpha)\gamma'}\int_{\mathbb{R}^n}m^{1+\alpha}\,dx.
\end{equation}
Indeed, by the scaling \eqref{scaling_mw}, one can obtain that $(m_{t},w_{t})\in \mathcal{A}\cap \left\{m: \Vert m\Vert_{L^{1}(\mathbb R^n)}=M \right\}$ for any $t\in \mathbb{R}^{+}$. Moreover, since $\alpha>\alpha^{*}=\frac{\gamma'}{n}$, it is straightforward to verify that the polynomial $\mathcal{J}_{0}(m_{t},w_{t})$ defined (for $t > 0$) by
\begin{equation}\label{deduce_GN}
		\mathcal{J}_{0}(m_{t},w_{t})=t^{\gamma'}\int_{\mathbb{R}^n}mL\left(-\frac{w}{m}\right)\,dx-\frac{t^{n\alpha}}{1+\alpha}\int_{\mathbb{R}^n}m^{1+\alpha}\,dx
	\end{equation}
possesses a unique maximum point $t_{m,w}\in\mathbb{R}^{+}$, and that $P(m_{t_{m,w}}, w_{t_{m,w}})=0$. Therefore, one can easy to obtain that  $e_{IM}=e_{P}$.

Then, we prove $e_{MP}= e_{P}$. On the one hand, we shall show that $e_{MP}\geq e_{P}$. For this purpose,  we separate $\mathcal{A}\cap \left\{m: \Vert m\Vert_{L^{1}(\mathbb R^n)}=M \right\}$ into two parts, more specifically,  
\begin{equation*}
   \mathcal{P}^{+}=\Big\{ (m,w)\in \mathcal{A}\cap \left\{m: \Vert m\Vert_{L^{1}(\mathbb R^n)}=M \right\}:  P(m,w)>0 \Big\},\,\,
   \mathcal{P}^{-}=\Big\{ (m,w)\in \mathcal{A}\cap \left\{m: \Vert m\Vert_{L^{1}(\mathbb R^n)}=M \right\}:  P(m,w)<0 \Big\}.
\end{equation*}
For any $h(t)\in \mathcal{A}\cap \left\{m: \Vert m\Vert_{L^{1}(\mathbb R^n)}=M \right\}$, $t\in[0,1]$, given in \eqref{mounpass_path}, we get by using \eqref{Gn_Inequality} and \eqref{poho_equality} that
\begin{align*}
  P(h(0))&= P(m_{1},w_{1})=\int_{\mathbb{R}^n}m_{1}L\left(-\frac{w_{1}}{m_{1}}\right)\,dx-\frac{n\alpha}{(1+\alpha)\gamma'}\int_{\mathbb{R}^n}{m_{1}}^{1+\alpha}\,dx\\
  &\geq \int_{\mathbb{R}^n}m_{1}L\left(-\frac{w_{1}}{m_{1}}\right)\,dx-\frac{n\alpha}{(1+\alpha)\gamma'{\Gamma_{\alpha}}}\left(\int_{\mathbb{R}^n}m_{1}L\left(-\frac{w_{1}}{m_{1}}\right)\,dx\right)^{\frac{n\alpha}{\gamma'}}> 0.
\end{align*}
The last inequality follows from the facts that $\alpha>\alpha_{*}=\frac{\gamma'}{n}$ and $(m_{1},w_{1})\in B_{R_{0}}$, where $R_{0}>0$ is chosen small enough. Hence $h(0)\in \mathcal{P}^{+}$. Meanwhile, by using \eqref{mountain_pass_level_00} and applying $\alpha>\alpha_{*}=\frac{\gamma'}{n}$ again, one has
\begin{align*}
  P(h(1))&= P(m_{2},w_{2})=\int_{\mathbb{R}^n}m_{2}L\left(-\frac{w_{2}}{m_{2}}\right)\,dx-\frac{n\alpha}{(1+\alpha)\gamma'}\int_{\mathbb{R}^n}{m_{2}}^{1+\alpha}\,dx\\
  &\leq \int_{\mathbb{R}^n}m_{2}L\left(-\frac{w_{2}}{m_{2}}\right)\,dx-\frac{1}{(1+\alpha)
}\int_{\mathbb{R}^n}{m_{2}}^{1+\alpha}\,dx=\mathcal{J}_{0}(m_{2},w_{2})<0.
\end{align*}
Thus, $h(1)\in \mathcal{P}^{-}$.  Consequently, using the continuity of $h(t)$ on $t\in[0,1]$, we find that there exists $t_{0}\in(0,1)$ such that $P(h(t_{0}))=0$, i.e., $h(t_{0})\in \mathcal{P}$. Therefore, we get
\begin{equation}\label{mountapass_level}
  \max_{t\in[0,1]}\mathcal{J}_{0}(h(t))\geq \mathcal{J}_{0}(h(t_{0}))\geq \inf_{(m,w)\in \mathcal{P}}\mathcal{J}_{0}(m,w)=e_{P}.
\end{equation}
Taking the infimum over $h\in\Pi$ in \eqref{mountapass_level}, we have
 $e_{MP}\geq e_{P}$. On the other hand,  we claim that the inequality  $e_{MP}\leq e_{P}$ holds. Indeed, arguing by contradiction, assume that there is $(\bar m, \bar w)\in \mathcal{P}$ such that $\mathcal{J}_{0}(\bar m, \bar w)<e_{MP}$. From Remark \ref{mountain_pass_geometry_1}, we know that there exist $0<\bar t_{1}<\bar t_{2}$ such that $(\bar m_{\bar t_{1}}, \bar w_{\bar t_{1}})\in B_{R_{0}}$ and $(\bar m_{\bar t_{2}}, \bar w_{\bar t_{2}})\in \overline{B_{2R_{0}}^{\,c}}\, \cap \left\{(m,w):\mathcal{J}_{0}(m,w)\leq 0\right \}$. Now, we set the path $\bar g(t)=( \bar m_{(1-t)\bar t_{1}+t\bar t_{2}}, \bar w_{(1-t)\bar t_{1}+t\bar t_{2}})\in \widetilde \Pi$, where $\widetilde \Pi$ is defined by \eqref{mounpass_path_tilde}. Then, from the equivalent definition of $e_{MP}$ shown in \eqref{mountain_pass_level_001}, one gets
 \begin{equation*}
	e_{MP}\leq \max_{t\in[0,1]}\mathcal{J}_{0}(\bar g(t))=\max_{t\in[0,1]}\mathcal{J}_{0}( \bar m_{(1-t)t_{1}+tt_{2}}, \bar w_{(1-t)t_{1}+tt_{2}})=\mathcal{J}_{0}(\bar m, \bar w)
 \end{equation*}
 which is a contradiction. Therefore, we obtain  $e_{MP}= e_{P}$.  This together with \eqref{eq-eP}  finishes the proof of this lemma.
\end{proof}

\begin{lemma}\label{Gn_Re_Mp}
	  Let $\alpha\in(\alpha_{*},\alpha^{*})$ and 
    $(\bar{m}_{0},\bar{w}_{0})\in \mathcal{A}\cap \left\{m: \Vert m\Vert_{L^{1}(\mathbb R^n)}=M \right\}$ be  the optimizer of\, $\Gamma_{\alpha}$ established in Lemma \ref{resolMFG}.  Then, 
	\begin{align*}
		\mathcal{J}_{0}(\bar m_{0},\bar w_{0})=\max_{t\in\mathbb{R}^{+}}\mathcal{J}_{0}(\bar m_{0t}, \bar w_{0t})= e_{MP}.
	\end{align*}
\end{lemma}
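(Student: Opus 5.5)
We prove the two equalities in turn, using Lemma \ref{Equivalent_mountain_level}, which gives $e_{MP}=e_{IM}=e_P$, and the Pohozaev identities of Lemma \ref{poholemma}.

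\textbf{Step 1: $\mathcal{J}_0(\bar m_0,\bar w_0)=\max_{t>0}\mathcal{J}_0(\bar m_{0t},\bar w_{0t})$.} The triple $(\bar u_0,\bar m_0,\bar\lambda_0)$ from Lemma \ref{resolMFG} solves \eqref{L^1_constrain_equation}. Apply the Pohozaev computation of Lemma \ref{poholemma} to it directly, or transport the identities for $(u_0,m_0,\mu_0)$ through the scaling \eqref{eqs1}--\eqref{eqs2}. The second identity in \eqref{eq2.49}, with $\mu$ replaced by the normalized coupling, then reads
\[
\int_{\mathbb R^n}\bar m_0L\Big(-\frac{\bar w_0}{\bar m_0}\Big)dx=\frac{n\alpha}{(1+\alpha)\gamma'}\int_{\mathbb R^n}\bar m_0^{1+\alpha}dx .
\]
This says exactly that $P(\bar m_0,\bar w_0)=0$, so $(\bar m_0,\bar w_0)\in\mathcal P$. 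By \eqref{deduce_GN}, $t\mapsto \mathcal J_0(\bar m_{0t},\bar w_{0t})$ has a unique critical point, which is its maximum, and $\frac{d}{dt}\big|_{t=1}=\gamma' P(\bar m_0,\bar w_0)=0$. Hence the maximum is attained at $t=1$, which is the first equality.

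\textbf{Step 2: $\mathcal J_0(\bar m_0,\bar w_0)=e_P$.} For any $(m,w)\in\mathcal P$, the constraint $P(m,w)=0$ gives
\[
\mathcal J_0(m,w)=\Big(1-\frac{\gamma'}{n\alpha}\Big)\int_{\mathbb R^n} mL\Big(-\frac wm\Big)dx .
\]
The Gagliardo--Nirenberg inequality \eqref{Gn_Inequality} with $\|m\|_{L^1}=M$ states $\int m^{1+\alpha}\le \Gamma_\alpha^{-1}A^{n\alpha/\gamma'}M^{((1+\alpha)\gamma'-n\alpha)/\gamma'}$, where $A:=\int mL(-w/m)$. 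Combining this with $P=0$ yields
\[
A\le \frac{n\alpha}{(1+\alpha)\gamma'}\,\Gamma_\alpha^{-1}A^{n\alpha/\gamma'}M^{\frac{(1+\alpha)\gamma'-n\alpha}{\gamma'}}.
\]
Since $n\alpha/\gamma'>1$, this forces the lower bound
\[
A\ge A_*:=\Big(\frac{(1+\alpha)\gamma'\Gamma_\alpha}{n\alpha}\,M^{-\frac{(1+\alpha)\gamma'-n\alpha}{\gamma'}}\Big)^{\frac{\gamma'}{n\alpha-\gamma'}},
\]
with equality if and only if $(m,w)$ attains $\Gamma_\alpha$. Consequently $e_P\ge (1-\frac{\gamma'}{n\alpha})A_*$.

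\textbf{Step 3: attainment.} By Lemma \ref{resolMFG}, $(\bar m_0,\bar w_0)$ is an optimizer of $\Gamma_\alpha$, and by Step 1 it lies in $\mathcal P$. It therefore achieves equality in Step 2, so $\mathcal J_0(\bar m_0,\bar w_0)=(1-\frac{\gamma'}{n\alpha})A_*\le e_P$, and hence $\mathcal J_0(\bar m_0,\bar w_0)=e_P$. Lemma \ref{Equivalent_mountain_level} then gives $e_P=e_{MP}$, completing the proof.

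\textbf{Expected obstacle.} The delicate point is Step 1: confirming that $(\bar m_0,\bar w_0)$ lies on $\mathcal P$. This requires verifying that the Pohozaev identities remain valid after the scaling \eqref{eqs1}, with the correct multiplier in front of $m^\alpha$. Concretely, one has to check that the scaled system has coupling coefficient $1$ in \eqref{L^1_constrain_equation}, so that the coefficient $\frac{n\alpha}{(1+\alpha)\gamma'}$ in front of $\int m^{1+\alpha}$ appears exactly. The admissibility conditions for Lemma \ref{poholemma}, namely exponential decay of $\bar m_0$ and a lower bound on $\bar u_0$, are inherited from $m_0$ and $u_0$ under the scaling.
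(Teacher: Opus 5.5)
Your proof is correct and follows essentially the same route as the paper. The Pohozaev identity gives $P(\bar m_0,\bar w_0)=0$, so $t=1$ maximizes the fiber map. The Gagliardo--Nirenberg inequality combined with $P=0$ then gives a lower bound for $\mathcal J_0$ on $\mathcal P$, and the $\Gamma_\alpha$-optimizer attains it.

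The only difference is cosmetic. You phrase the lower bound through $e_P$ rather than $e_{IM}$, and both equal $e_{MP}$ by (the proof of) Lemma \ref{Equivalent_mountain_level}. You also correctly keep the factor $M^{((1+\alpha)\gamma'-n\alpha)/\gamma'}$ in the bound, which the paper's version of this estimate omits.
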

\begin{proof}
From equation \eqref{L^1_constrain_equation} and the Pohozaev identity established in \cite[Lemma 3.7]{cirant2025critical}, one see that
	\begin{equation*}
	\int_{\mathbb{R}^{n}}\bar m_{0}L\left(-\frac{\bar w_{0}}{\bar m_{0}} \right)\,dx=\frac{n\alpha}{(1+\alpha)\gamma'}\int_{\mathbb{R}^n}{\bar m_{0}}^{1+\alpha}\,dx.
    \end{equation*}
 This means $P(\bar m_{0},\bar w_{0} )=0$, where $P(\cdot,\cdot)$ is defined in \eqref{poho_equality}. Therefore, we get
\begin{equation*}
 \max_{t\in \mathbb{R}^{+}}\mathcal{J}_{0}(\bar m_{0,t},\bar w_{0, t})=\mathcal{J}_{0}(\bar m_{0},\bar w_{0}),
\end{equation*}
which, together with Lemma \ref{Equivalent_mountain_level}, implies
\begin{equation}\label{min_max_J}
	e_{MP}=\inf_{(m,w)\in \mathcal{A}\cap \left\{m: \Vert m\Vert_{L^{1}(\mathbb R^n)}=M \right\}}\max_{t\in \mathbb{R}^{+}}\mathcal{J}(m_{t},w_{t})\leq\mathcal{J}_{0}(\bar m_{0},\bar w_{0}).
\end{equation}
On the other hand, it follows from \eqref{deduce_GN} that, for any $(m,w)\in\mathcal{A}\cap \left\{m: \Vert m\Vert_{L^{1}(\mathbb R^n)}=M \right\}$, there is a unique $t_{m,w}>0$ such that $P(m_{t_{m,w}}, w_{t_{m,w}})=0$ and
\begin{equation}\label{min_max_fun}
 \max_{t\in \mathbb{R}^{+}}\mathcal{J}_{0}( m_{t}, w_{t})=\mathcal{J}_{0}( m_{t_{m,w}},w_{t_{m,w}})=\frac{n\alpha-\gamma'}{(1+\alpha)\gamma'}\int_{\mathbb{R}^n}{m_{t_{m,w}}}^{1+\alpha}\,dx.
\end{equation}
By applying \eqref{Gn_Inequality} and $P(m_{t_{m,w}}, w_{t_{m,w}})=0$, we have
\begin{equation}\label{min_max_fun_2}
	\int_{\mathbb{R}^n}{m_{t_{m,w}}}^{1+\alpha}\,dx\geq { \Gamma_{\alpha}}^{\frac{\gamma'}{n\alpha-\gamma'}}	\left(\frac{(1+\alpha)\gamma'}{n\alpha}\right)^{\frac{n\alpha}{n\alpha-\gamma'}}.
\end{equation}
However, using the fact that $(\bar{m}_{0},\bar{w}_{0})\in \mathcal{A}\cap \left\{m: \Vert m\Vert_{L^{1}(\mathbb R^n)}=M \right\}$ minimizes \eqref{Gn_Inequality} and $P(\bar{m}_{0},\bar{w}_{0})=0$, one has
\begin{equation}\label{min_max_fun_5}
	\mathcal{J}_{0}( \bar m_{0},\bar w_{0})=\frac{n\alpha-\gamma'}{(1+\alpha)\gamma'}\int_{\mathbb{R}^n}{\bar m_{0}}^{1+\alpha}\,dx
\,\,\text{and}\,\,
	\int_{\mathbb{R}^n}{\bar m_{0}}^{1+\alpha}\,dx= { \Gamma_{\alpha}}^{\frac{\gamma'}{n\alpha-\gamma'}}	\left(\frac{(1+\alpha)\gamma'}{n\alpha}\right)^{\frac{n\alpha}{n\alpha-\gamma'}}.
\end{equation}
Therefore, it follows from \eqref{min_max_fun}, \eqref{min_max_fun_2} and \eqref{min_max_fun_5} that
 \begin{equation*}
	\mathcal{J}_{0}( \bar m_{0},\bar w_{0}) \leq  	\mathcal{J}_{0}( m_{t_{m,w}},w_{t_{m,w}}) =\max_{t\in \mathbb{R}^{+}}\mathcal{J}_{0}( m_{t}, w_{t}),\,\,\text{for any}\,\, (m,w)\in\mathcal{A}\cap \left\{m: \Vert m\Vert_{L^{1}(\mathbb R^n)}=M \right\},
\end{equation*}
which yields
\begin{equation}\label{min_max_fun_7}
	\mathcal{J}_{0}( \bar m_{0},\bar w_{0}) \leq  \inf_{(m,w)\in\mathcal{A}\cap \left\{m: \Vert m\Vert_{L^{1}(\mathbb R^n)}=M \right\}}\max_{t\in \mathbb{R}^{+}}\mathcal{J}_{0}( m_{t}, w_{t})=e_{MP}.
\end{equation}
Consequently, we conclude from \eqref{min_max_J} and \eqref{min_max_fun_7} that $\mathcal{J}_{0}( \bar m_{0},\bar w_{0}) =e_{MP},$ and the desired conclusion  follows.
\end{proof}

We are now in a position to prove Theorem \ref{Existience_of_MPS_MFG} by combining the results established above. The theorem is stated as follows:

\vspace{0.2cm}

\textbf{Proof of Theorem \ref{Existience_of_MPS_MFG}:} First of all, by Lemma \ref{resolMFG}, we obtain that there exists $\big(\bar u_{0},\bar m_{0},\bar \lambda_{0}\big)\in C^2(\mathbb R^n)\times  W^{1,p}(\mathbb R^n)\times  \mathbb R$, $\forall p>1$, which solves MFG system \eqref{L^1_constrain_equation}, where the pair  $(\bar m_{0},\bar w_{0})\in \mathcal{A}\cap \left\{m: \Vert m\Vert_{L^{1}(\mathbb R^n)}=M \right\}$, and $\bar w_{0}=- \bar m_{0}\nabla H(\nabla {\bar u}_0)$, is a minimizer of $\Gamma_{\alpha}$ defined in \eqref{Gn_Inequality}. Next, it follows from Lemma \ref{MP_1} that functional $\mathcal{J}_{0}$ associated with MFG system \eqref{L^1_constrain_equation} admits a mountain-pass geometry on the admissible set $\mathcal{A}\cap \left\{m: \Vert m\Vert_{L^{1}(\mathbb R^n)}=M \right\}$. Moreover, invoking Lemma \ref{Equivalent_mountain_level}, we find that the mountain-pass level $e_{MP}$ defined on $\mathcal{A}\cap \left\{m: \Vert m\Vert_{L^{1}(\mathbb R^n)}=M \right\}$ admits several equivalent characterizations. Finally, by using the equivalent characterizations of $e_{MP}$, we obtain from Lemma \ref{Gn_Re_Mp} that 
\begin{equation*}
e_{MP}=\inf_{(m,w)\in \mathcal{A}\cap \left\{m: \Vert m\Vert_{L^{1}(\mathbb R^n)}=M \right\}}\max_{t\in \mathbb{R}^{+}}\mathcal{J}(m_{t},w_{t})=\mathcal{J}(\bar m_{0},\bar w_{0}).
\end{equation*}
This indicates that the minimizer $(\bar m_{0},\bar w_{0})$ corresponding to ${\Gamma}_{\alpha}$ attains the mountain pass level $e_{MP}$ of $\mathcal{J}_{0}$ in $\mathcal{A}\cap \left\{m: \Vert m\Vert_{L^{1}(\mathbb R^n)}=M \right\}$. Therefore, we conclude that $\big(\bar u_{0},\bar m_{0},\bar \lambda_{0}\big)$ is a mountain-pass solution to the MFG system \eqref{L^1_constrain_equation}, which completes the proof of Theorem \ref{Existience_of_MPS_MFG}.
$\hfill\qed$
{
 \begin{remark}\label{MP_Ground}
 By Lemma~\ref{exi_potential_free_MFG}, Lemma~\ref{resolMFG}, and Lemma~\ref{Gn_Re_Mp}, we obtain 
 $$e_{MP}=\frac{n\alpha-\gamma'}{(1+\alpha)\gamma'}
 \left[\left( 1 - \frac{n \alpha}{(1+\alpha) \gamma'} \right)M^{-1}\right]^{\frac{(1 + \alpha )\gamma' -n\alpha }{n \alpha - \gamma'}}
 \; 
 e_0^{\frac{ (1+\alpha)\gamma'}{n \alpha- \gamma'}}.$$  Combining this with \eqref{e_0_min} and $\alpha\in(\alpha_{*},\alpha^{*})$, we conclude that the solution to
 \eqref{L^1_constrain_equation} given by Theorem~\ref{Existience_of_MPS_MFG}
  is a ground state. 
 \end{remark}     }

Finally, using the results established in Lemma \ref{sect3_lemm32}, Lemma \ref{resolMFG}, and Theorem \ref{Existience_of_MPS_MFG}, we prove Theorem \ref{GNinequalitythm}, which is

\vspace{0.2cm}

 \textbf{Proof of Theorem \ref{GNinequalitythm}:}  Noting that  $(m_{0}, w_{0})$ is  an optimizer of $\Gamma_{\alpha}$ obtained in Lemma \ref{sect3_lemm32}.   In addition, under the mass-supercritical regime, it follows from Lemma \ref{resolMFG} that the infimum $\Gamma_{\alpha}$ is attained at a pair $(\bar m_{0}, \bar w_{0}) \in \mathcal{A}\cap \left\{m: \Vert m\Vert_{L^{1}(\mathbb R^n)}=M \right\}$, which is obtained via a scaling of $(m_{0}, w_{0})\in \mathcal{A}\cap \left\{m: \Vert m\Vert_{L^{1+\alpha}(\mathbb R^n)}=1 \right\}$. Moreover, $\big(\bar u_{0},\bar m_{0},\bar \lambda_{0}\big)\in C^2(\mathbb R^n)\times  W^{1,p}(\mathbb R^n)\times\mathbb R$, $\forall p>1$, is the solution to \eqref{L^1_constrain_equation}.   On the other hand, in light of Theorem \ref{Existience_of_MPS_MFG}, the pair $(\bar m_{0}, \bar w_{0})$ achieves the mountain-pass level $e_{MP}$ of the functional $\mathcal{J}_{0}$, thereby establishing Theorem \ref{GNinequalitythm}.
$\hfill\qed$


\section{Conclusions}
 
In this paper, we study mountain-pass solutions to a viscous ergodic MFG system with aggregating coupling.  Using a variational approach, we consider the $L^{1+\alpha}$-constrained problem and establish the existence of minimizers via the direct method.  Technically, we first consider the coercive potential case and then take the vanishing potential limit to establish the existence of solutions for the auxiliary potential-free MFG system. Moreover, using a scaling argument, we show that the potential-free MFG with an $L^1$
  constraint admits a classical solution.  In addition, the equivalence of energies implies that this solution is of mountain-pass type. The key ingredients are the two-stage linearization of the $L^{1+\alpha}$
  constraint while showing the existence of value function $u$ and verifying the relationship between $w$ and $u$.  An additional Lagrange multiplier arises in this procedure and is shown to vanish by means of a scaling argument.

It is worthy mentioning that  there are several interesting questions that deserve future investigation. Firstly, on a technical side, we reformulate the original problem as an $L^{1+\alpha}$-constrained problem and restrict our attention to polynomial-growth coupling costs, i.e., $f(m)=-m^{\alpha}$ for some $\alpha>0.$  Relaxing the assumption on the coupling cost is an intriguing problem, but it poses significant technical challenges, as the $L^{1+\alpha}$ constraint is no longer applicable. Another interesting but challenging direction is the investigation of the properties of solutions, such as radial symmetry, uniqueness and positivity.  Since the MFG system is coupled, novel techniques are required to classify its solutions. In addition, the stability of the mountain-pass solutions obtained here remains an open problem. In contrast to the existence of local minimizers established in \cite{cirant2023ergodic}, our construction yields solutions of mountain-pass type. Investigating and comparing the stability properties of these two distinct classes of solutions would be an interesting direction for future research.


\section*{Acknowledgments}
We thank Professor M. Cirant for many stimulating discussions and insightful suggestions. Xiaoyu Zeng is supported by NSFC (Grant Nos. 12322106, 12171379, 12271417) .

\begin{appendices}
\setcounter{equation}{0}
\renewcommand\theequation{A.\arabic{equation}}

\section{Proof of Proposition \ref{cover_0_super}}\label{cover_0_superappen}
In this appendix, we provide the proof of Proposition~\ref{cover_0_super}, which concerns the classification of solutions to the MFG system \eqref{L^1_constrain_equation} arising from minimizers of \eqref{Gn_Inequality} obtained by $L^{1+\alpha}$-constraint minimization in terms of $\alpha \in (0,\alpha^{*})$.

\medskip 

\textbf{Proof of Proposition \ref{cover_0_super}:} First, Conclusion~$(iii)$ of Proposition~\ref{cover_0_super} follows directly from
Theorem~\ref{Existience_of_MPS_MFG} and Remark~\ref{MP_Ground}.

To prove Conclusion~$(i)$, note that $(m_{0}, u_{0}, \mu_{0})$ is a solution of
\eqref{mass_supercritical_problem_free_potential} and that $(m_{0}, w_{0})$ is an
optimizer of \eqref{Gn_Inequality}. Applying the scaling defined in
\eqref{eqs1}--\eqref{eqs2} to $(m_{0}, w_{0}, u_{0}, \mu_{0})$, we obtain the
existence of a pair $(\hat m, \hat w) \in \mathcal{A} \cap \left\{ m : \|m\|_{L^{1}(\mathbb{R}^n)} = M \right\}$ that minimizes $\Gamma_{\alpha}$ for $\alpha \in (0,\alpha_{*})$.
Correspondingly, $(\hat m, \hat u, \hat \lambda)$ is a solution of
\eqref{L^1_constrain_equation}.

Let
\[
\bar e_{0,\alpha,M}
:= \inf_{(m,w)\in \mathcal{A}\cap \left\{m: \|m\|_{L^{1}(\mathbb{R}^n)}=M \right\}}
\mathcal{J}_{0}(m,w).
\]
From \cite{cesaroni2018concentration}, we know that $\bar e_{0,\alpha,M}$ is
well defined for any $\alpha \in (0,\alpha_{*})$. Hence, it suffices to show that
$\mathcal{J}_{0}(\hat m, \hat w) = \bar e_{0,\alpha,M}$.  Suppose, by contradiction, that there exists a solution
$(\hat m_{1}, \hat u_{1}, \hat \lambda_{1})$ to \eqref{L^1_constrain_equation}
such that
$(\hat m_{1}, \hat w_{1}) \in \mathcal{A} \cap \left\{ m : \|m\|_{L^{1}(\mathbb{R}^n)} = M \right\}$,
where $\hat w_{1} = - \hat m_{1} \nabla H(\nabla \hat u_{1})$, and $\mathcal{J}_{0}(\hat m_{1}, \hat w_{1})
< \mathcal{J}_{0}(\hat m, \hat w)$.  Then, by a direct computation and applying the   scaling to
$(\hat m_{1}, \hat w_{1})$, we can construct a pair $(m_{1}, w_{1}) \in \mathcal{A} \cap \left\{ m : \|m\|_{L^{1+\alpha}(\mathbb{R}^n)} = 1 \right\}$
such that $\mathcal{E}_{0}(m_{1}, w_{1}) < e_{0}$,
which contradicts the minimality of $e_{0}$. This completes the proof of
Conclusion~$(i)$.

Finally, we prove Conclusion $(ii)$. 
Let $\hat m(x):=sm_{0}(rx)$, $\hat u(x):=tu_{0}(rx)$, then by Pohozaev identities \eqref{eq2.49}, one has 
\begin{align}\label{eqrt1}
	\left\{\begin{array}{ll}
		-\Delta \hat u+t^{1-\gamma}r^{2-\gamma} H(\nabla  {\hat u})+ s^{-\frac{\gamma'}{n}} tr^{2}\mu_{0} \hat m^{\alpha^{*}}=tr^{2},&x\in\mathbb R^n,\\
		\Delta \hat m+t^{1-\gamma}r^{2-\gamma} \nabla\cdot (\hat m\nabla H(\nabla \hat u))=0,&x\in\mathbb R^n,\\
		\hat w=- \hat  m\nabla H(\nabla \hat u),\ \frac{s^{-1}r^{n}(n+\gamma')}{\mu_{0}\gamma'}\int_{\mathbb R^n}\hat m\,dx=1.
	\end{array}
	\right.
\end{align}
Then, setting $t^{1-\gamma} r^{2-\gamma} = 1,\,
s^{-\frac{\gamma'}{n}} t r^{2} \mu_{0} = 1 \, \text{and} \,
\frac{s^{-1} r^{n} (n+\gamma')}{\mu_{0} \gamma'} = \frac{1}{M}$,
we deduce that $M = M^{*} := \Big(\Gamma_{\alpha_{*}} (1+\alpha_{*}) \Big)^{\frac{1}{\alpha_{*}}}$
must hold, with \(r\) remaining a free parameter. Using a limiting argument and (4.15) in \cite{cirant2025critical}, it follows that when \(\alpha = \alpha_*\) and \(M = M^*\), the Lagrange multiplier \(\lambda\) in system \eqref{L^1_constrain_equation} is uniquely determined as $
\lambda = - \frac{\gamma'}{n M^*}.$
Therefore, $t r^{2} = \frac{\gamma'}{n M^*}.$
Consequently, combining the above equalities and the fact $\mu_0=e_{0}=\left(\frac{n+\gamma'}{\gamma'}M^{*}\right)^{\frac{\gamma'}{n+\gamma'}}$, we have
$$
s_1:=\left(\frac{\gamma'}{n}\right)^{\frac{n}{\gamma'}}\left(\frac{n+\gamma'}{\gamma'}\right)^{\frac{n}{n+\gamma'}}{M^{*}}^{-\frac{n^2}{\gamma'(n+\gamma')}}, \,t_{1}:=\left(\frac{\gamma'}{nM^{*}}\right)^{\frac{\gamma'-2}{\gamma'}}\,\text{and}\,\,r_{1}:=\left(\frac{\gamma'}{nM^{*}}\right)^{\frac{1}{\gamma'}}.
$$
Thus, we obtain a triple $(\hat u (x),\hat m(x),\hat \lambda)=(t_1u_{0}(r_1 x), s_1m_{0}(r_1 x),-\frac{\gamma'}{n M^*})$   solving \eqref{L^1_constrain_equation} with $\alpha=\alpha_{*}$ and $M=M^{*}$.  It is straightforward to verify that $(\hat m,\hat w)\in \mathcal{A}\cap \left\{m: \Vert m\Vert_{L^{1}(\mathbb R^n)}=M^{*} \right\}$, $\mathcal{J}_{0}(\hat m,\hat w)=0$ and   $\mathcal{J}_{0}(m,w)\geq 0$ for any  $(m,w)\in \mathcal{A}\cap \left\{m: \Vert m\Vert_{L^{1}(\mathbb R^n)}=M^{*} \right\}$.  Consequently, $(\hat  m,\hat w)$ is a global minimizer of $\mathcal{J}_{0}$ in $\mathcal{A}\cap \left\{m: \Vert m\Vert_{L^{1}(\mathbb R^n)}=M^{*} \right\}$. In addition, if $M<M^{*}$, we claim that there is no classical solution of  \eqref{L^1_constrain_equation}.  Indeed, if there exists $\big(u_{1}, m_{1}, \lambda_1\big)\in C^2(\mathbb R^n)\times  W^{1,p}(\mathbb R^n)\times  \mathbb R$, $\forall p>1$, solving MFG system \eqref{L^1_constrain_equation} with $M<M*$. Then by the Gagliardo-Nirenberg inequality \eqref{Gn_Inequality} with $\alpha=\alpha_{*}$ and Pohozaev identities, we have
\begin{equation*}
\int_{\mathbb{R}^n}m_{1}^{1+\frac{\gamma'}{n}}\,dx\leq \Gamma_{\alpha_{*}}^{-1}\left(\int_{\mathbb{R}^n}m_{1}L\left(-\frac{w_1}{m_1}\right)dx\right)M^{\frac{\gamma'}{n}}= \Gamma_{\alpha_{*}}^{-1}\left(\frac{n}{n+\gamma'}\int_{\mathbb{R}^n}m_{1}^{1+\frac{\gamma'}{n}}\,dx\right)M^{\frac{\gamma'}{n}},
\end{equation*}
which yields 
\begin{equation*}
\Gamma_{\alpha_{*}}\leq \frac{n}{n+\gamma'}M^{\frac{\gamma'}{n}}.
\end{equation*}
Combining this with $M^{*}=\Big(\Gamma_{\alpha_{*}}(1+\alpha_{*})\Big)^{\frac{1}{\alpha_{*}}}$, we obtain $M\geq M^{*}$, which is a contradiction. Thus, the claim mentioned above holds.
Finally, if $M>M^{*}$, assume that $(m,w)\in \mathcal{A}\cap \left\{m: \Vert m\Vert_{L^{1}(\mathbb R^n)}=M \right\}$ is an optimizer of $\Gamma_{\alpha_{*}}$, then we have $\mathcal{J}_{0}(m,w)<0$ when $\alpha=\alpha^*$, where $\mathcal{J}_{0}(m,w)$ is given in \eqref{L^1_constrain_functional}.
Hence, $$\mathcal{J}_0\big(t^n m(tx),\, t^{n+1} w(tx)\big) = t^{\gamma'} \mathcal{J}_0(m, w) < 0,$$
which decreases monotonically as $t$ increases. This indicates that  \(\mathcal{J}_0\) does not admit a global minimizer, and that  the mountain pass geometry condition fails for $\mathcal{J}_0$ on
$\mathcal{A} \cap \big\{ m : \|m\|_{L^1(\mathbb{R}^n)} = M > M^* \big\}.$
\end{appendices}

\bibliographystyle{abbrv}
\bibliography{ref}

\end{document}